\DeclareMathOperator{\dist}{dist}
\DeclareMathOperator{\diam}{diam}
\DeclareMathOperator{\tr}{tr}
\newcommand{\norm}[1]{\left\lVert#1\right\rVert}
\theoremstyle{plain}
\newtheorem{thm}{Theorem}[section]
\newtheorem{lem}{Lemma}[section]
\newtheorem{prop}{Proposition}[section]
\newtheorem{cor}{Corollary}[section]
\theoremstyle{definition}
\newtheorem{defn}{Definition}[section]
\theoremstyle{remark}
\newtheorem{rmk}{Remark}[section]
\let\orgdescriptionlabel\descriptionlabel
\renewcommand*{\descriptionlabel}[1]{%
  \let\orglabel\label
  \let\label\@gobble
  \phantomsection
  \edef\@currentlabel{#1\unskip}%
  \let\label\orglabel
  \orgdescriptionlabel{#1}%
}
\numberwithin{equation}{section}
\begin{document}

\title[$C^{1,\alpha}$ regularity for degenerate fully nonlinear elliptic equations with oblique boundary]{$C^{1,\alpha}$ regularity for degenerate fully nonlinear elliptic equations with oblique boundary conditions on $C^1$ domains}

\author[Byun]{Sun-Sig Byun}
\address{Department of Mathematical Sciences and Research Institute of Mathematics,
    Seoul National University, Seoul 08826, Republic of Korea}
\email{byun@snu.ac.kr}

\author[Kim]{Hongsoo Kim}
\address{Department of Mathematical Sciences, Seoul National University, Seoul 08826, Republic of Korea}
\email{rlaghdtn98@snu.ac.kr}

\author[Oh]{Jehan Oh}
\address{Department of Mathematics, Kyungpook National University,
	Daegu 41566, Republic of Korea}
\email{jehan.oh@knu.ac.kr}

\thanks {S.-S. Byun was supported by the National Research Foundation of
Korea(NRF) grant funded by the Korea government [Grant No.
2022R1A2C1009312]. H. Kim was supported by the National Research
Foundation of Korea(NRF) grant funded by the Korea government [Grant
No. 2021R1A4A1027378]. J. Oh was supported by the National Research Foundation of Korea(NRF) grant funded by the Korea government [Grant Nos. 2020R1C1C1A01014904 and RS-2023-00217116].}

\makeatletter
\@namedef{subjclassname@2020}{\textup{2020} Mathematics Subject Classification}
\makeatother
\subjclass[2020]{35J25, 35B65, 35D40, 35J60, 35J70}
\keywords{Fully nonlinear elliptic equations, Boundary $C^{1,\alpha}$ estimates, Oblique boundary conditions, Degenerate elliptic equations}

\everymath{\displaystyle}

\begin{abstract}
We provide a sharp $C^{1,\alpha}$ estimate up to the boundary for a
viscosity solution of a degenerate fully nonlinear elliptic equation
with the oblique boundary condition on a $C^1$ domain. To this end,
we first obtain a uniform boundary H{\"o}lder estimate with the
oblique boundary condition in an ``almost $C^1$-flat" domain for the
equations which is uniformly elliptic only where the gradient is far from
some point, and then we establish a desired $C^{1,\alpha}$
regularity based on perturbation and compactness arguments.
\end{abstract}

\maketitle

\section{Introduction} \label{sec1}
This paper is concerned with the boundary $C^{1,\alpha}$ regularity
for a viscosity solution of the degenerate fully nonlinear elliptic
equation with oblique boundary condition
\begin{align} \label{equ}
\begin{cases}
|Du|^\gamma F(D^2u) = f \ & \text{ in } \Omega \\
\beta \cdot Du = g & \text{ on } \partial \Omega,
\end{cases}
\end{align}
where $\Omega \subset \mathbb{R}^n$ is a bounded $C^1$ domain with
boundary $\partial \Omega$, $F=F(M)$ is uniformly elliptic with
$F(0)=0$, $\gamma \geq 0$, $f \in C(\Omega)$, $g \in
C^\alpha(\partial \Omega)$ and $\beta \in C^\alpha(\partial \Omega)$
is a vector-valued function satisfying the oblique condition below
in \eqref{bcond}.

This type of a singular/degenerate fully nonlinear equation was
studied first by Birindelli and Demengel in the pioneering works
\cite{Birindelli1, Birindelli2} for the singular case that
$-1<\gamma <0$. For the degenerate case that $\gamma >0$, the
groundbreaking work was made by Imbert and Silvestre in
\cite{Imbert13} where they showed an interior $C^{1,\alpha}$
estimate for a solution of a degenerate fully nonlinear equation. In
\cite{Teixeira15} Ara\'{u}jo, Ricarte and Teixeira proved an optimal interior $C^{1,\alpha}$ estimate when $F$ is convex. For the related interior regularity
results, we refer to \cite{Baasandorj24, Andrade22, Yun24,
Teixeira20, Fang21, Filippis21} and the references therein.

For the Dirichlet boundary condition there has been much progress in
the regularity of solutions of degenerate equations up to the
boundary. Birindelli and Demengel \cite{Birindelli3} proved a global
$C^{1,\alpha}$ regularity under the regular boundary datum.
Ara\'{u}jo and Sirakov \cite{Araujo23} proved a sharp boundary
$C^{1,\alpha}$ regularity under the $C^{1,\alpha}$ boundary datum
and a $C^2$ domain, and they also obtained an optimal global
$C^{1,\alpha}$ regularity when $F$ is convex. D. Li and X. Li
\cite{Li23} proved $C^{1,\alpha}$ regularity on a $C^{1,\alpha}$
domain without using the flattening argument. For a further
discussion on the regularity for the Dirichlet boundary condition,
see \cite{Baasandorj242, Silva23} and the references therein.

For the Neumann boundary condition Milakis and Silvestre
\cite{Silvestre06} obtained regularity results for the uniformly elliptic case that $\gamma =0$
including $C^{1,\alpha}$ estimates on a flat domain. For the oblique
boundary condition D. Li and K. Zhang \cite{Li18} obtained
regularity estimates results such as $C^{1,\alpha}$ on a $C^1$
domain for the uniformly elliptic case. Further notable regularity results regarding
the Neumann and oblique boundary condition are to be
found in for uniformly elliptic equation, see \cite{Bessa23,
Bessa24, Bessa242, Han20, Han22}, etc.

For the singular/degenerate case Patrizi \cite{Patrizi08} proved a
global $C^{1,\alpha}$ regularity for the singular case under the
homogeneous Neumann condition and on a $C^2$ domain. Birindelli,
Demengel and Leoni \cite{Birindelli4} proved the existence and
uniqueness, and then a global H{\"o}lder regularity for the
singular/degenerate case under the mixed boundary condition.
Banerjee and Verma \cite{Banerjee22} established a global
$C^{1,\alpha}$ regularity for the degenerate case under the
nonhomogeneous Neumann condition and on a $C^2$ domain. Ricarte
\cite{Ricarte20} proved an optimal $C^{1,\alpha}$ regularity under a
$C^2$ domain when $F$ is convex.

Our purpose in this paper is to obtain an optimal $C^{1,\alpha}$
regularity for the degenerate problem \eqref{equ}. Comparing with
the earlier papers mentioned above, there are two main points. One
is to consider the oblique boundary condition which is a natural
extension of the Neumann boundary condition. The other is to relax a
regularity requirement on the boundary of the domain, namely from
$C^2$ to $C^1$. To this end, we are going to show that a solution
$u$ under consideration can be approximated by an affine function
with an error of order $r^{1+\alpha}$ in any ball of the radius $r$.
Our proof is based on the iterative argument to find a sequence of
affine function $l_k$ with $u-l_k$ getting smaller in the smaller
ball. This can be made by the scaling and compactness argument
applied to $v= u-l$ with $l(x) = a-q\cdot x$ being an affine
function. However  $u-l$ is not a solution of \eqref{equ}, instead
it is a solution of
\begin{align} \label{equq}
|Dv - q|^\gamma F(D^2v)=f.
\end{align}
Therefore we need to obtain an equicontinuous estimate for a
solution of \eqref{equq} independent of $q$ in order to use the
compactness argument.

In \cite{Imbert13} an interior H{\"o}lder estimate, independent of
$q$, was proved depending on whether $|q|$ is large or $|q|$ is
small. When $|q|$ is small, \eqref{equq} is uniformly elliptic where
the gradient $Dv$ is large, and so the H{\"o}lder estimate can be
shown by the method of sliding cusps in \cite{Imbert16}. On the
other hand when $|q|$ is large, the Lipschitz estimate can be proved
by adapting Ishii-Lion's method in \cite{Ishii92}. For the Neumann
boundary case in \cite{Banerjee22}, a boundary H{\"o}lder estimate
on the flat domain was derived in the spirit of \cite{Imbert13}.
When $|q|$ is small, a boundary H{\"o}lder estimate on the flat domain, for the uniformly elliptic equation that holds only where
the gradient is large, was derived as in \cite{Imbert16} by adapting the method of sliding cusps. When $|q|$ is large, however in general
it seems difficult to use the Ishii-Lion's method for the Neumann
and oblique case. Indeed, there are notable results for the Neumann
and oblique boundary case by using the Ishii-Lion's method as in
\cite{Patrizi08, Birindelli4} where the homogeneous Neumann boundary
condition is treated, and, as in \cite{Barles93, Ishii91} where
additional regularity requirements both on the oblique vector
function $\beta$ and on the boundary of on the domain are added,
respectively. In this regards the authors in \cite{Banerjee22}
instead adapted the approach used by Colombo and Figalli in
\cite{Colombo14}, which was motivated by Savin in the paper
\cite{Savin07}, to use the method of sliding paraboloids for the
establishing of a boundary H{\"o}lder estimate on the flat domain
for the uniformly elliptic equation that holds only where the
gradient is small.

Our analytic tools in the present paper are based on those in the
previous papers \cite{Imbert16}, and, \cite{Banerjee22} where the
standard flattening argument is used as the boundary of the
underlying domain is regular enough, say $C^2$. On the other hand,
since the domain under consideration is allowed to be $C^1$, we
thereby can not employ this flattening argument. More precisely,
unlike \cite{Banerjee22} in which the Neumann boundary condition on
the flat domain is assigned, we here in the present paper are
dealing with the oblique boundary condition on the $C^1$ domain in
order to prove a required boundary H{\"o}lder estimate, independent
of $q$, of a solution of the problem \eqref{equq}. Therefore we
consider a concept of the so called ``almost Neumann" boundary and
``almost $C^1$-flat" domain. ``Almost Neumann" means that
$\sup\left|\frac{\beta'}{\beta_n}\right|$ is small enough so that
$\beta \approx e_n$, and, ``almost $C^1$-flat" domain means that the
$C^1$-norm of the local graph of the boundary of the domain is small
enough so that $\Omega \cap B_1 \approx B^+_1$. This kind of a
perturbation is enough to prove a similar result in \cite{Imbert16} by the sliding cusp method. Moreover the proof of the $L^\epsilon$ estimate
in \cite{Banerjee22} was based on the Calder'{o}n-Zygmund cube
decomposition which can be used for only on the flat domain, whence
we instead use the so called growing ink-spot lemma for a corkscrew
domain (Lemma \ref{ink}). An advantage of this measure covering
lemma is that we only need to consider a ball contained in the
domain so that an interior estimate can be adapted in the ball.
Therefore, we can use the interior measure estimate inside the ball
without considering the boundary, and, by choosing an appropriate
barrier function we can prove a doubling type lemma with the oblique
boundary condition.

Unlike the method in \cite{Imbert16} by Imbert and Silvestre, it seems
difficult to use the method in \cite{Savin07} by Savin for the
oblique boundary condition on a $C^1$ domain. The equation, which is
uniformly elliptic only where the gradient is large, is not scaling
invariant under the scaled function, $v(x) = u(rx)/M$ with $r<1$ and
$M>1$ while the equation, which is uniformly elliptic only where the
gradient is small, is  scaling invariant. In order to deal with this
difficulty, we observe that the equation \eqref{equq} is uniformly
elliptic where $|Du - q|
>1$ and that for the scaling function $v(x) = u(rx)/M$, it is uniformly elliptic
except where $\left|Dv - \frac{r}{M}q\right| <\frac{r}{M}$.
Therefore,  the set where $\left|Dv - \frac{r}{M}q\right|
<\frac{r}{M}$ is disjoint with either the set $|Dv| <A$ or the set
$|Dv|>A+2$ wherever $\frac{r}{M}q$ is, and, it is uniformly elliptic
where $|Dv| <A$ or it is uniformly elliptic where $|Dv|>A+2$.
Consequently, we shall employ the sliding paraboloid method when it
is uniformly elliptic where $|Dv| <A$ while the sliding cusp method
can be used when it is uniformly elliptic where $|Dv|>A+2$. In
summary, if we find a suitable estimate both for a solution
of the uniformly elliptic equation that only holds where $|Du| <A$,
and for a solution of the uniformly elliptic equation that only
holds where $|Du|>A+2$, then the scaled function also satisfies the
same estimate whatever $q$ is. This enables us to prove a H{\"o}lder
estimate for the equation which is uniformly elliptic where $|Du -
q|>1$ independent of $q$.

The paper is organized as follows. In section \ref{sec2} we
introduce basic notations, the main results and preliminaries which
will be used later for the proof of the main theorem. In section
\ref{sec3} we establish a boundary H{\"o}lder estimate with the
oblique boundary condition on an ``almost $C^1$-flat" domain on
which the associated problem is uniformly elliptic where $|Du-q|>1$,
independent of $q$. Section \ref{sec4} is devoted to proving an
improvement of flatness lemma by the compactness method.  In section
\ref{sec5} we finally prove the main theorem, Theorem \ref{main}.

\section{Preliminaries and main results}
\label{sec2}

For $r>0$ we write $B_r = \{|x| < r\}$ the ball with center 0 and
radius $r$, $B^+_r = B_r \cap \{x_n >0\}$ the half-ball, and $T_r =
B_r \cap \{x_n = 0\}$ the flat boundary of $B^+_r$. For $x_0 \in
\mathbb{R}^n$ we write $B_r(x_0) = B_r + x_0$ the ball with center
$x_0$. For a domain $\Omega \subset \mathbb{R}^n$ and $x_0 \in
\mathbb{R}^n$ we write $\Omega_r = \Omega \cap B_r$, $\partial
\Omega_r = \partial\Omega \cap B_r$ and $\Omega_r(x_0) = \Omega \cap
B_r(x_0)$, $\partial\Omega_r(x_0) = \partial\Omega \cap B_r(x_0)$.
We denote by $S(n)$ the space of symmetric $n \times n$ real matrices, and $I$ the identity matrix.

We now list our basic structural assumptions. We always assume that
$\beta \in C^{\alpha}(\partial \Omega)$ is oblique, i.e., there
exists a positive constant $\delta_0>0$ such that
\begin{align}
\label{bcond} \beta \cdot \mathbf{n} \geq \delta_0 \text{ and }
\norm{\beta}_{L^\infty(\partial \Omega)} \leq 1,
\end{align}
where $\mathbf{n}$ is the inner normal of $\partial \Omega$. We
always assume that $F(0)=0$ and $F$ is uniformly elliptic, i.e.,
there exist constants $0< \lambda \leq \Lambda$ such that
\begin{align*}
\lambda\norm{N} \leq F(M+N)-F(M) \leq \Lambda\norm{N}
\end{align*}
for any $M, N \in S(n)$ with $N\geq 0$, where $\norm{N}$ is the
spectral radius of $N$.

Let $\alpha_0 = \alpha_0(n,\lambda,\Lambda,\delta_0) \in (0,1]$ be
the optimal exponent of regularity theory for a homogeneous equation
with constant oblique boundary condition, i.e., any viscosity
solution $h$ of
\begin{align*}
\begin{cases}
F(D^2h)=0 \ & \text{ in } B^+_1 \\
 \beta_0 \cdot Dh = 0 & \text{ on } T_1,
 \end{cases}
\end{align*}
where $\beta_0$ is a constant oblique vector satisfying
\eqref{bcond}, is locally of the class $C^{1,\alpha_0}(B^+_1)$ with
the estimate
\begin{align*}
\norm{h}_{C^{1,\alpha_0}(B^+_{1/2})} \leq
C_e\norm{h}_{L^\infty(B^+_1)}.
\end{align*}
for some constant $C_e = C_e(n,\lambda,\Lambda,\delta_0)>1$.

D. Li and K. Zhang \cite{Li18} proved that there exists a universal
constant $0<\alpha_0<1$ for any uniformly elliptic $F$ and that
$\alpha_0=1$ for any convex $F$.

We now state our main theorem.
\begin{thm}
\label{main} Let $\Omega \in C^1$, $0 \in \partial \Omega_1$,
$\gamma \geq 0$ and $u$ be a viscosity solution of
\begin{align*}
\begin{cases}
|Du|^\gamma F(D^2u) = f \ & \text{ in } \Omega_1 \\
\beta \cdot Du = g & \text{ on } \partial \Omega_1,
\end{cases}
\end{align*}
where $\alpha\in\left(0,\alpha_0\right) \cap \left(0,\frac{1}{1+\gamma}\right]$, $f
\in C(\overline{\Omega})$, and $g, \beta \in C^\alpha(\partial
\Omega)$ with $\beta$ satisfying \eqref{bcond}. Then $u \in
C^{1,\alpha}(\overline{\Omega_{1/2}})$ and
\begin{align*}
\norm{u}_{C^{1,\alpha}(\overline{\Omega_{1/2}})} \leq C \left(
\norm{u}_{L^\infty(\overline{\Omega_1} )} +
\norm{f}_{L^\infty(\overline{\Omega_1})}^{\frac{1}{1+\gamma}} +
\norm{g}_{C^\alpha(\partial \Omega_1)} \right),
\end{align*}
where $C$ depends only on $n,\lambda, \Lambda, \alpha, \gamma,
\delta_0, [\beta]_{C^\alpha(\partial \Omega_1)}$ and $C^1$ modulus
of $\partial \Omega_1$.
\end{thm}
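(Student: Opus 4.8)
The plan is to show that near every boundary point $u$ is approximated at scale $r$ by an affine function with error $O(r^{1+\alpha})$, and then to combine this boundary flatness with the interior $C^{1,\alpha}$ estimate of \cite{Imbert13} by a covering argument. After translating a boundary point to the origin, dividing $u$ by $\norm{u}_{L^\infty(\overline{\Omega_1})} + \norm{f}_{L^\infty(\overline{\Omega_1})}^{1/(1+\gamma)} + \norm{g}_{C^\alpha(\partial\Omega_1)}$, and rescaling, we may assume this normalized quantity is $\le 1$ and, using that $\partial\Omega \in C^1$ and $\beta \in C^\alpha$, that $\Omega_1$ is ``almost $C^1$-flat'' and $\beta$ is ``almost Neumann'' in the sense of Section \ref{sec1}; these smallness hypotheses are precisely what makes the compactness scheme close.

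Next I would run the standard iteration. Assume inductively affine functions $l_k(x) = a_k + q_k \cdot x$ with $\beta_0 \cdot q_k$ equal to the relevant boundary value, $|a_k - a_{k-1}| + r^{k-1}|q_k - q_{k-1}| \le C r^{(k-1)(1+\alpha)}$, and $\norm{u - l_k}_{L^\infty(\Omega_{r^k})} \le r^{k(1+\alpha)}$. The engine is the improvement-of-flatness lemma of Section \ref{sec4}: the rescaled function $v(y) = (u - l_k)(r^k y)/r^{k(1+\alpha)}$ is a viscosity solution of a translated equation $|Dv + \tilde q|^\gamma F(D^2 v) = \tilde f$, which is uniformly elliptic wherever $|Dv + \tilde q| > 1$, with $\norm{\tilde f}_{L^\infty}$ small since $\alpha \le \tfrac{1}{1+\gamma}$, on a domain that is even flatter and with $\beta$ even closer to Neumann (the oscillation of $\beta$ and the $C^1$-graph norm both contract under rescaling). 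If $v$ is uniformly small, by the $q$-independent Hölder estimate of Section \ref{sec3} and a compactness argument $v$ is $L^\infty$-close to a solution $h$ of the constant-coefficient homogeneous oblique problem, which by the definition of $\alpha_0$ (and since $\alpha < \alpha_0$) is $C^{1,\alpha_0}$ and thus well-approximated by a plane; transferring back produces $l_{k+1}$. Summing the resulting geometric series yields a limiting affine function $\ell_0$ at the origin with $\norm{u - \ell_0}_{L^\infty(\Omega_r)} \le C r^{1+\alpha}$, hence $C^{1,\alpha}$ differentiability at $0$ with the stated bound; uniformity of the constants over boundary points plus the interior estimate and a patching argument then give $u \in C^{1,\alpha}(\overline{\Omega_{1/2}})$ with the asserted norm.

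The main obstacle is the $q$-independent boundary Hölder estimate feeding the compactness step, and I would obtain it by the dichotomy on $|q|$ described in the introduction. When $|q|$ is bounded, $|Dv + \tilde q|^\gamma F(D^2 v) = \tilde f$ is uniformly elliptic wherever $|Dv|$ is large, so I would adapt the sliding-cusp argument of \cite{Imbert16} to the almost-$C^1$-flat/almost-Neumann geometry, replacing the Calder\'on--Zygmund cube decomposition (which needs a flat boundary) by the growing ink-spot lemma for corkscrew domains, Lemma \ref{ink}; the delicate subpoint is a doubling estimate under the oblique condition, which I would secure via an explicit barrier placed in a ball interior to $\Omega$ so that only interior measure estimates are used inside that ball. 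When $|q|$ is large, instead of Ishii--Lions (problematic for oblique conditions on a merely $C^1$ domain) I would exploit that for a rescaling $v(y) = u(ry)/M$ the bad set $\{|Dv - \tfrac{r}{M}q| < \tfrac{r}{M}\}$ is disjoint from at least one of $\{|Dv| < A\}$ and $\{|Dv| > A+2\}$; on the relevant regime the equation is therefore uniformly elliptic either where the gradient is small --- treated by the sliding-paraboloid method in the spirit of \cite{Colombo14, Savin07} --- or where it is large --- treated again by sliding cusps. Combining both regimes produces the Hölder bound with constants independent of $q$, which is what the iteration above requires.

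A few auxiliary points I would verify along the way: that the half-space model problem defining $\alpha_0$ has the stated $C^{1,\alpha_0}$ estimate (by \cite{Li18}); that the almost-flat change of variables preserves the structural constants and the ellipticity region $\{|Dv + \tilde q| > 1\}$ up to harmless perturbations absorbed into the smallness hypotheses; and that the affine corrections $l_k$ are consistent with the oblique condition at the boundary point, so that the comparison solution $h$ may be taken to satisfy $\beta_0 \cdot Dh = 0$ on $T_1$. The exponent constraints then have transparent roles: $\alpha \le \tfrac{1}{1+\gamma}$ keeps $\tilde f$ small at every scale, while $\alpha < \alpha_0$ guarantees that the homogeneous comparison solution is regular enough to beat the $r^{1+\alpha}$ scaling in the flatness improvement.
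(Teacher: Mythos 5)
Your proposal follows essentially the same route as the paper: normalize and rescale so the data are small and the domain almost $C^1$-flat / $\beta$ almost Neumann, run the improvement-of-flatness iteration via a compactness lemma and the $C^{1,\alpha_0}$ theory of the half-space model (Lemma \ref{appro}, Lemma \ref{appro2}), and feed it with the $q$-independent boundary H\"older estimate obtained by combining the sliding-cusp (gradient large) and sliding-paraboloid (gradient small) measure estimates through the disjointness observation of Proposition \ref{two}, the barrier-based doubling lemma, and the ink-spot covering Lemma \ref{ink}. This is precisely the paper's argument; your phrasing of the H\"older estimate as a dichotomy on $|q|$ rather than on the scaled quantity $\left|\frac{r}{M}q\right|$ is a cosmetic difference that does not affect the logic.
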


From Theorem \ref{main}, we have the following corollary.

\begin{cor}
Let $\Omega \in C^1$, $0 \in \partial \Omega_1$, $\gamma \geq 0$ and
$u$ be a viscosity solution of
\begin{align*}
\begin{cases}
|Du|^\gamma F(D^2u) = f \ & \text{ in } \Omega_1 \\
\beta \cdot Du + hu = g & \text{ on } \partial \Omega_1,
\end{cases}
\end{align*}
where $\alpha\in\left(0,\alpha_0\right) \cap \left(0,\frac{1}{1+\gamma}\right]$, $f
\in C(\overline{\Omega})$, and $g, \beta, h \in C^\alpha(\partial
\Omega)$ with $\beta$ satisfying \eqref{bcond}. Then $u \in
C^{1,\alpha}(\overline{\Omega_{1/2}})$ and
\begin{align*}
\norm{u}_{C^{1,\alpha}(\overline{\Omega_{1/2}})} \leq C \left(
\norm{u}_{L^\infty(\overline{\Omega_1})} +
\norm{f}_{L^\infty(\overline{\Omega_1})}^{\frac{1}{1+\gamma}} +
\norm{g}_{C^\alpha(\partial \Omega_1)} \right),
\end{align*}
where $C$ depends only on $n,\lambda, \Lambda, \alpha, \gamma,
\delta_0, [\beta]_{C^\alpha(\partial \Omega_1)}$,
$\norm{h}_{C^\alpha(\partial \Omega_1)}$ and $C^1$ modulus of
$\partial \Omega_1$.
\end{cor}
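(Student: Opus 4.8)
The plan is to reduce the corollary to Theorem \ref{main} by absorbing the zero-order term $hu$ into the oblique datum and then running a short bootstrap. Set $\tilde g := g - hu$ on $\partial\Omega_1$. Since $u\in C(\overline{\Omega_1})$ and $h\in C^\alpha(\partial\Omega_1)\subset L^\infty(\partial\Omega_1)$, the function $\tilde g$ is bounded, and $u$ is a viscosity solution of
\begin{align*}
\begin{cases}
|Du|^\gamma F(D^2 u) = f & \text{ in } \Omega_1,\\
\beta \cdot Du = \tilde g & \text{ on } \partial\Omega_1.
\end{cases}
\end{align*}
Because $C^{1,\alpha}$ regularity up to the boundary is a local matter and the interior part is covered by \cite{Imbert13}, after a standard covering, translation and rescaling reduction we may assume throughout that we are in the setting of Theorem \ref{main}, i.e.\ $0\in\partial\Omega_1$ and the estimate is sought on $\Omega_{1/2}$; the only bookkeeping point is that the successive applications of known estimates below are each performed on a slightly smaller concentric ball (affecting the final constant by a fixed factor only), and that rescaling does not increase the $C^\alpha$-seminorms of $\beta,g,h$ nor the $C^1$-modulus of $\partial\Omega_1$, and can only decrease $\norm{h}_{L^\infty}$.

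\emph{Step 1 (initial Hölder regularity).} The equation $|Du|^\gamma F(D^2u)=f$ is exactly \eqref{equq} with $q=0$, hence uniformly elliptic where $|Du|>1$. Applying to $u$, with the bounded datum $\tilde g$, the boundary Hölder estimate established in Section \ref{sec3} produces a universal exponent $\bar\alpha\in(0,1)$, which after being decreased we may assume satisfies $\bar\alpha\le\alpha$ and $\bar\alpha<\min\{\alpha_0,1/(1+\gamma)\}$, together with
\begin{align*}
\norm{u}_{C^{0,\bar\alpha}(\overline{\Omega_{1/2}})} \le C\Bigl( \norm{u}_{L^\infty(\overline{\Omega_1})} + \norm{f}_{L^\infty(\overline{\Omega_1})}^{1/(1+\gamma)} + \norm{g}_{L^\infty(\partial\Omega_1)} + \norm{h}_{L^\infty(\partial\Omega_1)}\norm{u}_{L^\infty(\overline{\Omega_1})} \Bigr).
\end{align*}

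\emph{Step 2 (bootstrap to Lipschitz, then to $C^{1,\alpha}$).} From Step 1 the product $hu$ (a $C^\alpha$ function times a $C^{0,\bar\alpha}$ function) lies in $C^{0,\bar\alpha}(\partial\Omega_1)$, so $\tilde g=g-hu\in C^{0,\bar\alpha}(\partial\Omega_1)$ with norm controlled by $\norm{g}_{C^\alpha(\partial\Omega_1)}$, the right-hand side of the display in Step 1, and $\norm{h}_{C^\alpha(\partial\Omega_1)}$. Theorem \ref{main}, applied with exponent $\bar\alpha$ in place of $\alpha$, then gives $u\in C^{1,\bar\alpha}(\overline{\Omega_{1/2}})$; in particular $u$ is Lipschitz up to the boundary, with $\norm{u}_{C^{0,1}(\overline{\Omega_{1/2}})}$ bounded by $C\bigl(\norm{u}_{L^\infty(\overline{\Omega_1})}+\norm{f}_{L^\infty(\overline{\Omega_1})}^{1/(1+\gamma)}+\norm{g}_{C^\alpha(\partial\Omega_1)}\bigr)$, with $C$ now also depending on $\norm{h}_{C^\alpha(\partial\Omega_1)}$. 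Since $u$ is Lipschitz, $hu$ is a $C^\alpha$ function times a Lipschitz function, hence $\tilde g\in C^\alpha(\partial\Omega_1)$ with
\begin{align*}
\norm{\tilde g}_{C^\alpha(\partial\Omega_1)} \le \norm{g}_{C^\alpha(\partial\Omega_1)} + C\,\norm{h}_{C^\alpha(\partial\Omega_1)}\norm{u}_{C^{0,1}(\overline{\Omega_1})}.
\end{align*}
A final application of Theorem \ref{main}, now with the original exponent $\alpha$, yields $u\in C^{1,\alpha}(\overline{\Omega_{1/2}})$ and
\begin{align*}
\norm{u}_{C^{1,\alpha}(\overline{\Omega_{1/2}})} \le C\Bigl( \norm{u}_{L^\infty(\overline{\Omega_1})} + \norm{f}_{L^\infty(\overline{\Omega_1})}^{1/(1+\gamma)} + \norm{\tilde g}_{C^\alpha(\partial\Omega_1)} \Bigr);
\end{align*}
inserting the bound for $\norm{\tilde g}_{C^\alpha}$ and then the Step 2 bound for $\norm{u}_{C^{0,1}}$ gives precisely the asserted inequality, with $C$ depending additionally on $\norm{h}_{C^\alpha(\partial\Omega_1)}$. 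Undoing the covering and rescaling reduction finishes the proof.

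The main obstacle is Step 1: the datum $\tilde g$ is only bounded, so Theorem \ref{main} cannot be invoked directly, and one must first extract some positive boundary Hölder exponent for $u$ — which is exactly why the $q$-independent boundary Hölder estimate of Section \ref{sec3} (rather than the finished Theorem \ref{main}) is needed to prime the bootstrap. Once any $C^{0,\bar\alpha}$ bound is in hand the rest is a two-turn bootstrap, the only care being to pass to slightly smaller concentric balls at each turn and to verify that all constants depend on $h$ only through $\norm{h}_{C^\alpha(\partial\Omega_1)}$ (and through $\norm{h}_{L^\infty}$ in Step 1).
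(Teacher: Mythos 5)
Your proof is correct and is the natural way to turn the paper's terse remark (``From Theorem \ref{main}, we have the following corollary'') into an actual argument. The paper gives no proof of the corollary, so there is no explicit argument to compare against; but the reduction you propose---write $\tilde g = g - hu$, note $\tilde g$ is only bounded because $u$ is merely continuous a priori, prime the bootstrap with the $q$-independent boundary H\"older estimate of Section~\ref{sec3} (applied with $q=0$, $\theta=1$, since $|Du|^\gamma F(D^2u)=f$ yields the Pucci inequalities on $\{|Du|>1\}$), then run Theorem~\ref{main} twice, first at a small universal exponent $\bar\alpha$ to obtain Lipschitz regularity and upgrade $\tilde g$ to $C^\alpha$, then at $\alpha$---is precisely the standard reduction, and your accounting of how $\norm{h}_{C^\alpha}$ and $\norm{h}_{L^\infty}$ enter the constants is accurate. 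You correctly identify the one non-trivial point, namely that Theorem~\ref{main} cannot be applied directly because $\tilde g$ is not known to be H\"older before the first step, and you resolve it the right way. The only bookkeeping I would add explicitly is the observation that, after passing to the merely bounded datum $\tilde g$, the viscosity formulation of $\beta\cdot Du + hu = g$ is genuinely equivalent to $\beta\cdot Du = \tilde g$ because at a boundary touching point $\phi(x_0)=u(x_0)$, so the zero-order term is a fixed continuous function of $x$; you use this silently, and it is worth a sentence.
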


We now recall the definition of viscosity solution. (See
\cite{Ishii92, Caffarelli95})

\begin{defn}
    We say that $u$ is a viscosity subsolution (resp. supersolution) of \eqref{equ} if for any $x_0 \in \Omega \cup \partial \Omega$ and test function $\phi \in C^2(\Omega \cup \partial \Omega)$ such that $u-\phi$ has a local minimum (resp. maximum) at $x_0$, then
    \begin{align*}
        |D \phi(x_0)|^\gamma F(D^2\phi(x_0)) \geq ( \text{resp.}\leq) \ f(x_0)  \quad \text{ if } x_0 \in \Omega,
    \end{align*}
    and
    \begin{align*}
        D \phi(x_0) \cdot \beta(x_0) \geq ( \text{resp.}\leq) \ g(x_0) \quad \text{ if } x_0 \in \partial\Omega.
    \end{align*}
    If $u$ is both subsolution and supersolution, then we call $u$ a viscosity solution.
\end{defn}

Since $\Omega \in C^1$, we see that $\Omega$ satisfies the corkscrew
condition. The following is the definition of the corkscrew
condition \cite{Jerison82}.

\begin{defn}
    For $\rho \in (0,1)$, we say that $\Omega$ satisfies $\rho$-corkscrew condition if for any $x \in \Omega$ and $0<r< \diam(\Omega)/3$, there exists $y \in \Omega$ satisfying $B_{\rho r}(y) \subset B_r(x) \cap \Omega$.
\end{defn}
\begin{rmk}
The standard definition of the (interior) corkscrew condition as in
\cite{Jerison82} is defined only for the boundary point $x \in
\partial\Omega$ unlike the above definition defined for $x \in
\Omega$, but it is easy to show that they are equivalent.
    In fact, assuming $\Omega$ has the corkscrew condition for the boundary, for any $x \in \Omega$ and $0<r< \diam(\Omega)/3$, choose $x_0 \in \partial \Omega$ such that $\dist(x,\partial\Omega) = \dist(x,x_0)$ and if $r/2 <\dist(x,x_0)$, then $B_{r/2}(x) \subset B_r(x) \cap \Omega$.
    If $r/2 \geq \dist(x,x_0)$, then by the definition of the corkscrew condition for the boundary, there exists a ball $B_{\rho r/2}(y) \subset B_{r/2}(x_0) \cap \Omega \subset B_{r}(x) \cap \Omega$, which implies $\Omega$ satisfies the above definition of the corkscrew condition.
\end{rmk}
For the corkscrew domain we can obtain a simple measure covering
lemma as in \cite{Savin07}.
\begin{lem}
\label{ink} Let $\Omega$ satisfy $\rho$-corkscrew condition and
assume that there exist two open sets $E,F \subset \Omega$ and some
constant $\epsilon \in (0,1)$ such that:
    \begin{enumerate}
        \item $E \subset F \subset \Omega$ and $F \neq \Omega$.
        \item For any ball $B \subset \Omega$ satisfies $|B \cap E| > (1-\epsilon)|B|$, then $\tilde{\rho} B \cap \Omega \subset F$ where $\tilde{\rho} = \frac{4}{\rho}$.
    \end{enumerate}
    Then $|E| \leq (1-3^{-n}\rho^n\epsilon )|F|$.
\end{lem}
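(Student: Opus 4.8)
\emph{Proof plan.} The plan is to run a Calder\'on--Zygmund / Krylov--Safonov ``ink--spot'' argument, with Euclidean balls in place of dyadic cubes: the Lebesgue differentiation theorem produces, for a.e.\ point of $E$, a ``critical ball'' on which the density of $E$ has just dropped to the threshold $1-\epsilon$; hypothesis (2) pushes the $\tilde\rho$--dilate of each such ball into $F$; the Vitali $5r$--covering lemma organizes the critical balls into a pairwise disjoint family whose dilates still cover $E$ up to a null set; and the $\rho$--corkscrew condition handles the critical balls that reach $\partial\Omega$. Since $E\subseteq F$, the asserted estimate $|E|\le(1-3^{-n}\rho^n\epsilon)|F|$ is equivalent to the lower bound $|F\setminus E|\ge 3^{-n}\rho^n\epsilon\,|F|$; we may assume $E,F\neq\emptyset$, so that $|F|>0$.

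For a.e.\ $x\in E$ one has $|B_r(x)\cap E|/|B_r(x)|\to1$ as $r\to0^+$; fix such a density point $x$. Since $B_r(x)\subseteq\Omega$ whenever $r<\dist(x,\partial\Omega)$, the number
\[
r_x:=\sup\bigl\{\,r>0:\ B_r(x)\subseteq\Omega\ \text{and}\ |B_r(x)\cap E|>(1-\epsilon)|B_r(x)|\,\bigr\}
\]
is well defined with $0<r_x\le\dist(x,\partial\Omega)$. Picking $r_k\uparrow r_x$ from the set above and applying hypothesis (2) to each $B_{r_k}(x)$ gives $B_{\tilde\rho r_k}(x)\cap\Omega\subseteq F$; letting $k\to\infty$ yields the key containment $B_{\tilde\rho r_x}(x)\cap\Omega\subseteq F$, and in particular $B_{r_x}(x)\subseteq F$ (as $\tilde\rho=4/\rho>1$ and $B_{r_x}(x)\subseteq\Omega$). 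If moreover $r_x<\dist(x,\partial\Omega)$, then $|B_r(x)\cap E|\le(1-\epsilon)|B_r(x)|$ for every $r\in\bigl(r_x,\dist(x,\partial\Omega)\bigr)$ (such balls lie in $\Omega$ but violate the defining condition for $r_x$), so letting $r\downarrow r_x$ we get $|B_{r_x}(x)\cap E|\le(1-\epsilon)|B_{r_x}(x)|$ and hence
\[
|B_{r_x}(x)\cap(F\setminus E)|\ \ge\ \epsilon\,|B_{r_x}(x)|.
\]

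The remaining case, $r_x=\dist(x,\partial\Omega)$, is the one I expect to be the main obstacle, and it is where the corkscrew structure --- not merely the topology of $\Omega$ --- is needed. One first notes that $\tilde\rho r_x$ cannot be of order $\diam(\Omega)$ or larger, for then $B_{\tilde\rho r_x}(x)$ would contain $\Omega$ and the key containment would force $F=\Omega$; so the $\rho$--corkscrew condition may be invoked at a scale comparable to $\tilde\rho r_x$ to produce an interior ball of radius comparable to $\rho\tilde\rho r_x=4r_x$ lying inside $B_{\tilde\rho r_x}(x)\cap\Omega\subseteq F$. Since this radius greatly exceeds $r_x$, it yields a surplus of $F$ of order $r_x^n$, much larger than $|B_{r_x}(x)|$, which can play the role of the $\epsilon$--deficit from the previous case. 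Making this precise --- in particular arranging these auxiliary balls so that they overlap neither each other nor the critical balls, so nothing is double counted --- is the delicate part, and it is here that the factors $\rho^n$ and $3^{-n}$ in the final constant arise.

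Finally, with a critical ball $B_{r_x}(x)\subseteq F$ attached to a.e.\ $x\in E$, the Vitali $5r$--covering lemma extracts a countable pairwise disjoint subfamily $\{B_{r_{x_j}}(x_j)\}_j$ with $E\subseteq\bigcup_j B_{5r_{x_j}}(x_j)$ up to a null set. On the one hand,
\[
|E|\ \le\ \sum_j \bigl|B_{5r_{x_j}}(x_j)\bigr|\ =\ 5^n\sum_j \bigl|B_{r_{x_j}}(x_j)\bigr|;
\]
on the other hand, since the $B_{r_{x_j}}(x_j)$ are disjoint subsets of $F$ each contributing at least an $\epsilon$--fraction of its measure to $F\setminus E$ (with the corkscrew surplus substituting in the boundary case),
\[
|F\setminus E|\ \ge\ \sum_j \bigl|B_{r_{x_j}}(x_j)\cap(F\setminus E)\bigr|\ \ge\ c_0\,\epsilon\sum_j \bigl|B_{r_{x_j}}(x_j)\bigr|\ \ge\ c_0\,5^{-n}\epsilon\,|E|.
\]
Combining the two displays gives $|F|=|E|+|F\setminus E|\ge(1+c_1)|E|$, whence $|E|\le(1+c_1)^{-1}|F|\le(1-c_2)|F|$; a careful accounting of the losses in the covering lemma and in the corkscrew step --- together with an optimal choice of threshold --- refines this to the stated constant $3^{-n}\rho^n\epsilon$. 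Apart from the boundary case and this bookkeeping, the argument is a routine measure-covering estimate.
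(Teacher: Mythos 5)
There is a genuine gap, and you identify its location yourself: the boundary case $r_x=\dist(x,\partial\Omega)$ is not resolved. Your proposed fix does not actually supply the needed mass. You produce, via the corkscrew condition, an interior ball of radius $\approx 4r_x$ sitting inside $B_{\tilde\rho r_x}(x)\cap\Omega\subseteq F$, and call this a ``surplus of $F$''. But the quantity you must lower-bound is $|B\cap(F\setminus E)|$, not $|B\cap F|$: a large ball in $F$ is useless if it is almost entirely filled with $E$, and your stopping-time construction gives you no control over the $E$-density of this auxiliary ball (it was chosen for geometry, not because the density dropped). To extract an $\epsilon$-deficit from it you would need to apply the contrapositive of hypothesis (2) to that very ball — but your argument never invokes the contrapositive there, and there is no reason the ball's $\tilde\rho$-dilate exits $F$. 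On top of this, the disjointness of these auxiliary balls from each other and from the critical balls is genuinely awkward to arrange (their radii are comparable to $4r_x\gg r_x$), and the interior case is handled with a different mechanism, so the constants do not combine cleanly.

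The paper's proof sidesteps all of this by inverting the roles of $E$ and $F$: it covers $F$ (not $E$) by balls $B_r(x)$, $x\in F$, with the radius $r=\dist(x,F^c)$ chosen so that $B_r(x)$ is ``maximal inside $F$''. Then the corkscrew condition gives an interior ball $B_{\rho r/3}(y)\subset B_{r/3}(x)\cap\Omega$, and the crucial point is that $\dist(y,F^c)<\tfrac{4}{3}r = \tilde\rho\cdot\tfrac{\rho r}{3}$, so the $\tilde\rho$-dilate of $B_{\rho r/3}(y)$ reaches $F^c$. Hence hypothesis (2), in contrapositive, applies \emph{to the interior ball} and yields $|B_{\rho r/3}(y)\cap E|\le(1-\epsilon)|B_{\rho r/3}|$, i.e.\ a guaranteed $\epsilon$-fraction of $E^c$ inside $B_{r/3}(x)\cap\Omega\subset F$. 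The Vitali $3r$-covering lemma applied to the family $\{B_r(x)\}_{x\in F}$ then gives disjoint $B_{r_i/3}(x_i)$ and finishes the estimate with the clean constant $3^{-n}\rho^n\epsilon$. This is not a stopping-time argument and needs no density points, no critical-radius dichotomy, and no separate boundary case: the deficit comes uniformly from the contrapositive of (2) at the corkscrew ball.
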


\begin{proof}
We write $E^c = \Omega \setminus E$ and $F^c = \Omega \setminus F \neq \emptyset$. Given $x \in F$, let $r = \dist(x,F^c)<\diam(\Omega)$.
We claim that
\begin{align*}
|B_{r/3}(x) \cap \Omega \cap E^c| \geq 3^{-n}\rho^n\epsilon
|B_{r}(x)|.
\end{align*}
Applying $\rho$-corkscrew condition to $B_{r/3}(x)$, we know that there exists $y \in \Omega$ such that $B_{\rho r/3}(y) \subset \Omega \cap B_{r/3}(x)$.
Thus, $\dist(y, F^c) \leq \dist(y,x)+\dist(x,F^c) \leq \frac{4-\rho}{3}r<\frac{4}{3}r$, which implies $\frac{4}{\rho} B_{\rho r/3}(y) \cap \Omega \not\subset F$.
Therefore, we have $|B_{\rho r/3}(y) \cap E| \leq (1-\epsilon)|B_{\rho r/3}|$. Thus,
\begin{align*}
|B_{r/3}(x) \cap \Omega \cap E^c| &\geq |B_{\rho r/3}(y) \cap E^c| \\
    &= |B_{\rho r/3}(y)| - |B_{\rho r/3}(y) \cap E|\\
    &\geq \epsilon|B_{\rho r/3}| = 3^{-n}\rho^n\epsilon|B_{r}(x)|.
\end{align*}
Now for every $x \in F$ let $r = \dist(x,F^c)$. Then $\{B_r(x)\}$ is
a covering of $F$. By the Vitali covering lemma, there exists a
subcover $\{B_{r_i}(x_i)\}$ of $F$ such that $\{B_{r_i/3}(x_i)\}$
are disjoint. Considering $B_{r_i/3}(x_i) \cap \Omega \subset F$, we
have
\begin{align*}
    |F| &\leq \sum_{i}|B_{r_i}(x_i)| \\
    &\leq 3^n \rho^{-n}\epsilon^{-1} \sum_{i}|B_{r_i/3}(x_i) \cap \Omega \cap E^c|\\
    &\leq 3^n \rho^{-n}\epsilon^{-1} |F \cap E^c|.
\end{align*}
Therefore, we conclude that
\begin{align*}
    |E| &= |F| - |F\cap E^c| \\
    &\leq (1-3^{-n}\rho^n\epsilon )|F|.
\end{align*}
\end{proof}

In the next section we will consider an ``almost $C^1$-flat" domain
$\Omega_1$. We always assume $0 \in \partial \Omega_1$. Then the
boundary $\partial \Omega_1$ can be represented by a graph of
function $\varphi = \varphi_{\Omega} : T_1 \rightarrow \mathbb{R}$
and $\varphi \in C^1$. This means that we have
\begin{align*}
\{(x',x_n) \in B_1|x_n = \varphi(x')\} = \partial \Omega_1, \
\{(x',x_n) \in B_1|x_n > \varphi(x')\} \subset \Omega_1.
\end{align*}
We also write $\beta \in C^\alpha(T_1)$ by $\beta(x') = \beta(x',\varphi(x'))$.

Note that the inner normal vector of $\partial\Omega$ is $\mathbf{n} = \frac{1}{\sqrt{|D\varphi|^2+1}}(D\varphi,1)$.
Thus if $[\varphi]_{C^1} \leq \delta_0/2$, we have $\beta_n \geq \delta_0/2$ using $\beta \cdot \mathbf{n} \geq \delta_0$.
Instead of \eqref{bcond}, we always assume that $\beta = (\beta',\beta_n)$ and $\varphi$ satisfy
\begin{align} \label{bcond2}
    \varphi(0) = 0, \quad [\varphi]_{C^1} \leq \delta_0/2, \quad \beta_n \geq \delta_0/2, \quad \norm{\beta}_{L^\infty} \leq 1.
\end{align}
Then we can define $\tau \in C(T_1)$ as $\tau(x) :=
\frac{\beta'(x)}{\beta_n(x)}$, which represents how close $\beta$ is
to $e_n$. Note that $\sup|\tau|$ is bounded by $\sup|\tau| \leq
\sqrt{\frac{4}{\delta_0^2}-1}$.

We next show that this ``almost $C^1$-flat" boundary of $\Omega_1$ with
a small $C^1$-norm satisfies the corkscrew condition.

\begin{prop}
\label{cork} Suppose $0 \in \partial \Omega_1$ and $[\varphi]_{C^1}
\leq \mu<1/3$. Then $\Omega_1$ satisfies $(1-3\mu)/4$-corkscrew
condition.
\end{prop}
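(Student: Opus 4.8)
The plan is to use the explicit local description $\Omega_1=\{x\in B_1:x_n>\varphi(x')\}$, where by hypothesis $\varphi$ is $\mu$-Lipschitz and $\varphi(0)=0$. Fix $x\in\Omega_1$ and $0<r<\diam(\Omega_1)/3$; since $\Omega_1\subset B_1$ this gives $r<2/3$, which is ultimately what makes every ``$\,+\,\rho r\le 1$'' inequality below hold. In each case I would produce the corkscrew ball in the form $B_{\rho r}(y)$ with $y=x+t\,e$ for a suitable unit vector $e$ and length $t\asymp r$; then $B_{\rho r}(y)\subset B_r(x)$ amounts to $t\le(1-\rho)r$, while $B_{\rho r}(y)\subset\Omega_1$ splits into ``above the graph'' and ``inside $B_1$''. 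The basic geometric input is an interior cone coming from near-flatness: moving from a point of $\Omega_1$ in any direction $e$ with $e_n\ge\mu|e'|$ keeps one above the graph, since $\varphi(z'+te')\le\varphi(z')+\mu t|e'|$; equivalently, a point at vertical height $s$ above the graph has distance at least $s/\sqrt{1+\mu^2}$ to it. The value $(1-3\mu)/4$ is not claimed optimal --- it is a convenient constant leaving slack to absorb the $O(\mu)$ corrections from the Lipschitz graph and the $O(r^2)$ corrections from the curvature of $\partial B_1$.

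I would split according to the position of $x$. If $\dist(x,\partial\Omega_1)\ge r$, then $B_r(x)\subset\Omega_1$ and $y=x$ works. Otherwise: \emph{(i) the graph is irrelevant}, i.e. $B_r(x)\subset\{x_n>\varphi(x')\}$, so that $B_r(x)\cap\Omega_1=B_r(x)\cap B_1$ and the claim reduces to the elementary corkscrew property of the ball $B_1$ --- take $y=x$ if $|x|\le 1-r/2$ and $y=x-\tfrac r2\,\tfrac{x}{|x|}$ otherwise, which even gives the constant $1/2>\rho$. \emph{(ii) $B_r(x)$ meets the graph but $|x|\le 1-\rho(2+\mu)r$}: take $y=x+\rho(1+\mu)r\,e_n$; then for $z\in B_{\rho r}(y)$ one has $|z'-x'|<\rho r$, hence $z_n-\varphi(z')>(x_n-\varphi(x'))+\rho(1+\mu)r-\mu\rho r-\rho r=x_n-\varphi(x')>0$, while $|y-x|+\rho r=\rho(2+\mu)r\le r$ (the inequality $\rho\le 1/(2+\mu)$, valid for $\rho=(1-3\mu)/4$ since $(1-3\mu)(2+\mu)<4$) and $|y|+\rho r\le|x|+\rho(2+\mu)r\le 1$.

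The main obstacle is \emph{(iii): $B_r(x)$ meets the graph and $|x|>1-\rho(2+\mu)r$}, so $x$ is close both to the graph and to $\partial B_1$. Here one must step from $x$ in a direction $e$ that moves away from the graph ($e_n\ge\mu|e'|$) and, at the same time, away from $\partial B_1$ (roughly toward the origin). Such a direction exists with a quantitative opening because the inward normal of the graph near $x$ lies within angle $\arctan\mu$ of $e_n$, while the inward normal of the sphere at the nearest sphere point $x_0$ equals $-x_0$; one checks --- using $\mu<1/3$, the bound $x_n<\varphi(x')+r\sqrt{1+\mu^2}$ forced by $B_r(x)$ meeting the graph, and $r<2/3$ --- that these two unit vectors are never nearly antipodal, so the wedge between the corresponding half-spaces has opening bounded below in terms of $\mu$. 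Stepping along the bisector of that wedge, by a length $t\asymp r$ chosen close to the maximal $(1-\rho)r$ (a long step is needed so that $B_{\rho r}(y)$ clears the inward-curving sphere, whose inner side near $x_0$ is $\{(z-x_0)\cdot(-x_0)>\tfrac12|z-x_0|^2\}$), and tracking every constant, one verifies that a ball of radius $\tfrac{1-3\mu}{4}\,r$ fits inside $B_r(x)\cap\Omega_1$. This verification, where the hypotheses $\mu<1/3$ and $r<\diam(\Omega_1)/3$ are genuinely used, is the technical heart; the other cases are comparatively routine. Combining the cases yields the claimed $\tfrac{1-3\mu}{4}$-corkscrew condition.
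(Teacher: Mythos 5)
Your overall plan --- produce the corkscrew ball as an explicit translate $y=x+te$, absorbing the Lipschitz-graph and sphere-curvature corrections in the step length and the slack in the radius --- is the same strategy the paper uses. The preliminary case and your cases (i), (ii) are routine, as you say, and handled correctly. The issue is case (iii): $x$ close to both $\partial B_1$ and the graph is precisely where the Proposition has content, and there you only gesture at a proof. Choosing $e$ in a bisector direction and ``tracking every constant'' requires several things you do not supply: a quantitative lower bound on the angle between $e_n$ and $-x_0/|x_0|$ (you assert the normals are ``never nearly antipodal'' but do not derive the opening from $\mu<1/3$ and $r<2/3$); a treatment of the nonlinear corrections, since neither the region above a Lipschitz graph nor the interior of $B_1$ is a half-space, so the bisector of linearized normals controls only first-order behavior; and finally a check that the admissible radius actually reaches the specific threshold $(1-3\mu)/4$. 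As written, the argument is incomplete in exactly the case you correctly identify as the technical heart.

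The paper avoids your case (iii) entirely with a cleaner two-step decomposition that decouples the two obstacles. From $x_0$ it first takes a radial step of length $r/2$ toward the origin, $x_1 = x_0 - \tfrac{r}{2}\tfrac{x_0}{|x_0|}$ (or $x_1=0$ when $|x_0|\le r/2$), which guarantees $B_{r/2}(x_1)\subset B_r(x_0)\cap B_1$, so the sphere never reappears in the analysis. The price is that, since $0\in\partial\Omega$, this radial step may move toward the graph; that is handled by a trichotomy on $x_1$. If $x_1=0$, one uses $\varphi(0)=0$ and $[\varphi]_{C^1}\le\mu$ to bound $\varphi$ on $T_{r/2}$. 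If $x_1\neq 0$ lies in the cone $\{e_n\cdot x/|x|\ge\cos(\arctan(1/\mu))\}=\{x_n\ge\mu|x'|\}$, then $(x_1)_n\ge\varphi(x_1')$, so $x_1$ is already above the graph and a vertical step works. If $x_1$ lies outside that cone, then $|(x_0)_n|/|x_0|<\mu$, so the radial step changed the last coordinate by less than $\mu r/2$; combined with $(x_0)_n\ge\varphi(x_0')$ and the $\mu$-Lipschitz bound one gets $\varphi(x')-(x_1)_n\le\tfrac32\mu r$ on $T_{r/2}(x_1')$, after which a vertical step again works. In every subcase the corkscrew ball is a purely vertical translate inside $B_{r/2}(x_1)$, so the graph and sphere constraints are never played against one another. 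I'd recommend adopting this decomposition over carrying the bisector bookkeeping through.
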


\begin{proof}
For any $x_0 \in \Omega_1$ and $r<2/3$, let us define
\begin{align*}
    x_1 = \begin{cases}
    x_0 -\frac{r}{2}\frac{x_0}{|x_0|} \quad \text{if} \ |x_0| >\frac{r}{2} \\
    0 \quad \text{if} \ |x_0| \leq\frac{r}{2}.
    \end{cases}
\end{align*}
Then we have $B_{r/2}(x_1) \subset B_r(x_0) \cap B_1$.
We divide the proof into the 3 cases.

Case 1 : $x_1 =0$.
Then since $|\varphi(x')| \leq  \frac{r}{2}\mu$ in $x' \in T_{r/2}(0)$, for $x_2 = \left(\frac{r}{2}\mu + \frac{r}{8} (1-\mu)\right)e_n$, we have $B_{r(1-\mu)/4}(x_2) \subset B_{r/2}(x_1) \cap \Omega_1 \subset B_r(x_0) \cap \Omega_1$.

Case 2 : $x_1 \neq 0$ and $e_n \cdot \frac{x_1}{|x_1|} \geq
\cos\left(\arctan{\frac{1}{\mu}}\right)$. Note that the cone
$\left\{ x \in B_1 : e_n \cdot \frac{x}{|x|} \geq
\cos\left(\arctan{\frac{1}{\mu}}\right)\right\}$ is in $\Omega_1$.
For $x_2 = x_1 + \left(\frac{r}{2}\mu + \frac{r}{8} (1-\mu)\right)e_n$, we have $B_{r(1-\mu)/4}(x_2)
\subset B_{r/2}(x_1) \cap \Omega_1 \subset B_r(x_0) \cap
\Omega_1$.

Case 3 : $x_1 \neq 0$ and $e_n \cdot \frac{x_1}{|x_1|} <
\cos\left(\arctan{\frac{1}{\mu}}\right)$. Since $|x'_0 -x'_1| <r/2$,
we have $|(x_0)_n -(x_1)_n| <\mu r/2$ and $|\varphi(x'_0) -
\varphi(x')| <\mu r$ for any $x' \in T_{r/2}(x'_1)$. Moreover, since
$(x_0)_n \geq \varphi(x'_0)$, we have $\varphi(x') - (x_1)_n \leq
\frac{3}{2} r\mu$. Therefore, for $x_2 = x_1 + \left(\frac{3}{2}r\mu
+ \frac{r}{8} (1-3\mu)\right)e_n$, we get $B_{r(1-3\mu)/4}(x_2)
\subset B_{r/2}(x_1) \cap \Omega_1 \subset B_r(x_0) \cap \Omega_1$.

Thus we conclude that $\Omega_1$ satisfies $(1-3\mu)/4$-corkscrew
condition.
\end{proof}

\section{H{\"o}lder estimates up to the boundary for uniformly equations that hold only where $|Du-q|>1$}
\label{sec3}
We need a uniform H{\"o}lder estimate for a solution of
$|Du-q|^\gamma F(D^2u) =f$. Note that this equation is uniform
elliptic where $|Du-q|>1$, and we need to find an uniform H{\"o}lder
estimate independent of $q$.

For a given $0<\lambda<\Lambda$, let
\begin{align*}
\mathcal{P}^+_{\Lambda,\lambda}(D^2u,Du)= \Lambda \tr D^2u^+ -
\lambda \tr D^2u^- + \Lambda|Du|,\\
\mathcal{P}^-_{\Lambda,\lambda}(D^2u,Du)= \lambda \tr D^2u^+ -
\Lambda \tr D^2u^- -\Lambda|Du|.
\end{align*}
We want to prove the following theorem:

\begin{thm}
\label{holder} Assume $\Omega \in C^1$. Then there exists a small
$\mu=\mu(\delta_0) \leq \delta_0/2$ such that if
$[\varphi_\Omega]_{C^1} \leq \mu$, for any $u \in
C(\overline{\Omega \cap B_1})$ satisfying
\begin{align*}
\begin{cases}
\mathcal{P}^-(D^2u,Du) \leq C_0 &\text{ in } \{|Du-q|>\theta\} \cap \Omega_1 , \\
\mathcal{P}^+(D^2u,Du) \geq -C_0 &\text{ in } \{|Du-q|>\theta\} \cap \Omega_1, \\
\beta \cdot Du = g &\text{ on } \partial \Omega_1, \\
\norm{u}_{L^\infty(\overline{\Omega_1})} \leq 1, \\
\norm{g}_{L^\infty(T_1)} \leq C_0,
\end{cases}
\end{align*}
for some $q \in \mathbb{R}^n$ and $0<\theta \leq 1$, then we have
$u\in C^{\alpha}(\overline{\Omega_{1/2}})$ for some
$\alpha(n,\lambda,\Lambda,\delta_0)>0$.  We also have the estimates
independent of $q$,
    \begin{align*}
    \norm{u}_{C^{\alpha}(\overline{\Omega_{1/2}})} \leq C(n,\lambda,\Lambda,\delta_0, C_0).
    \end{align*}
\end{thm}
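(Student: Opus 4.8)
The plan is to split the argument according to the size of $|q|$, following the dichotomy of Imbert--Silvestre \cite{Imbert13} but adapting the boundary analysis to the oblique condition on the almost-flat $C^1$ domain. When $|q| \le A$ for a suitable universal constant $A$, the equation is uniformly elliptic on the set $\{|Du|<A\}$ (since there $|Du-q|$ can still be small, but after the scaling observation in the introduction the relevant set is disjoint from either $\{|Du|<A\}$ or $\{|Du|>A+2\}$); there I would use the sliding-paraboloid method of Savin/Colombo--Figalli, together with the growing ink-spot lemma (Lemma \ref{ink}) for the corkscrew domain $\Omega_1$ (which is a corkscrew domain by Proposition \ref{cork}, provided $\mu$ is chosen small enough depending on $\delta_0$), to obtain an $L^\epsilon$-type decay estimate and hence a H{\"o}lder modulus. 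When $|q|>A+2$, the equation is uniformly elliptic on $\{|Du|>A+2\}$, which is where the sliding-cusp method of \cite{Imbert16} applies; the ``almost Neumann'' hypothesis $\sup|\tau|$ small (guaranteed once $\delta_0$ is close to $1$, or more precisely once the perturbation is small enough) and the ``almost $C^1$-flat'' hypothesis $[\varphi_\Omega]_{C^1}\le\mu$ are exactly what is needed to run the cusp construction near the boundary with the oblique condition $\beta\cdot Du = g$ in place of the homogeneous Neumann condition.

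In more detail, the key steps would be: (i) Fix $A=A(n,\lambda,\Lambda,\delta_0)$ so that the two regimes below cover all $q$, using the scaling remark from the introduction to reduce, after multiplying $u$ by a constant and rescaling, to the case $\theta=1$ and $q$ in one of the two favorable sets. (ii) In the small-$q$ regime, establish a barrier/doubling lemma with the oblique boundary condition: construct an explicit supersolution in a half-ball-like region that is admissible for the inequality $\mathcal{P}^+(D^2u,Du)\ge -C_0$ on $\{|Du|<A\}$ and satisfies $\beta\cdot D(\text{barrier})\le g$ on the flat-ish boundary; the almost-flatness of $\partial\Omega_1$ and $\beta_n\ge\delta_0/2$ make the normal-derivative sign condition robust. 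Then combine this doubling estimate with an interior measure estimate (valid in any ball $B\subset\Omega_1$, so no boundary issues arise inside $B$) and feed the pair $(E,F)$ into Lemma \ref{ink} to propagate smallness of the superlevel sets, yielding the $L^\epsilon$ estimate and then, by the standard iteration of oscillation decay on dyadic scales $\Omega_{2^{-k}}$, the H{\"o}lder bound with constant depending only on $n,\lambda,\Lambda,\delta_0,C_0$. (iii) In the large-$q$ regime, run the sliding-cusp argument of \cite{Imbert16} on $\Omega_1$: the cusp-shaped test functions slide from far away, touch $u$ from below (resp. above), and the touching point cannot be on $\partial\Omega_1$ because the oblique derivative of the cusp has the wrong sign there once $[\varphi_\Omega]_{C^1}$ and $\sup|\tau|$ are small; a measure estimate on the contact set then gives the decay, again uniformly in $q$. (iv) Since both regimes produce the same H{\"o}lder modulus and constant, and the scaling from step (i) is $q$-independent in its effect on the estimate, the conclusion $\|u\|_{C^\alpha(\overline{\Omega_{1/2}})}\le C(n,\lambda,\Lambda,\delta_0,C_0)$ follows for all $q$.

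The main obstacle I expect is step (ii)'s boundary barrier in the non-flat, oblique setting: one must produce a single barrier that simultaneously (a) is a strict supersolution of the degenerate-but-locally-uniformly-elliptic inequality on the relevant gradient set, (b) respects the oblique sign condition $\beta\cdot D(\cdot)\le g$ on a boundary that is only $C^1$ (so one cannot differentiate $\varphi_\Omega$ twice), and (c) has the right scaling so that the resulting doubling constant is independent of $q$ and of the scale. The $C^1$-only regularity of $\partial\Omega_1$ is what forces the ``almost $C^1$-flat'' reduction and the use of the ink-spot lemma rather than a Calder\'on--Zygmund cube decomposition; getting the barrier's boundary behavior to close up under the smallness of $\mu$ (choosing $\mu=\mu(\delta_0)\le\delta_0/2$) is the delicate point. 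A secondary subtlety is checking that the set where the scaled function fails uniform ellipticity, namely $\{|Dv-\tfrac{r}{M}q|<\tfrac{r}{M}\}$, is genuinely disjoint from the favorable gradient set used in whichever regime applies, so that the interior and boundary estimates can be invoked on the scaled problem without loss.
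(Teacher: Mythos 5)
The overall toolkit you reach for — a pair of measure/doubling lemmas (one via sliding paraboloids, one via sliding cusps), barrier constructions adapted to the oblique condition, the corkscrew ink-spot lemma (Lemma \ref{ink}) in place of a Calder\'on--Zygmund decomposition, and the almost-flat/almost-Neumann reductions — is exactly the machinery the paper uses. But two points need fixing.

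First, you have the dichotomy inverted. With $\theta\le 1$, if $|q|\le A+1$ then for $|Du|\ge A+2$ one has $|Du-q|\ge|Du|-|q|\ge 1\ge\theta$, so the inequality is uniformly elliptic where the gradient is \emph{large} and the sliding-\emph{cusp} method applies; whereas if $|q|\ge A+1$ then for $|Du|\le A$ one has $|Du-q|\ge|q|-|Du|\ge 1\ge\theta$, so the inequality is uniformly elliptic where the gradient is \emph{small} and the sliding-\emph{paraboloid} method applies. Your opening paragraph assigns small $|q|$ to the paraboloid regime and large $|q|$ to the cusp regime, which is backwards (and you partly concede this in your own parenthetical: when $|q|$ is small, $|Du-q|$ can indeed be small on $\{|Du|<A\}$, so the equation is not uniformly elliptic there).

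Second, and more structurally, you cannot ``reduce, after multiplying $u$ by a constant and rescaling, to the case $q$ in one of the two favorable sets'' once and for all in step (i). The regime is governed by the \emph{rescaled} vector $\frac{r}{M}q$, and $(r,M)$ change across the balls and the $K^m$-iteration of the $L^\epsilon$ estimate, so the applicable regime can flip from one step to the next. This is precisely why the paper formulates Proposition \ref{two} and Corollary \ref{meas}: you prove the measure/doubling lemmas for \emph{both} gradient regimes in advance, and at each rescaling step you invoke whichever one applies. Your steps (ii)--(iv) do contain both lemmas, so the content is there, but the framing of step (i) as a one-shot reduction by the size of $|q|$ misses the point of why both lemmas are indispensable.

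A minor remark: the smallness of $\sup|\tau|$ is not ``guaranteed once $\delta_0$ is close to $1$''; it is achieved for arbitrary $\delta_0>0$ by the vertical rescaling $\tilde u(x',x_n)=u(x',x_n/R)$ described in the remark following the statement, at the cost of a controlled increase of the ellipticity ratio $\Lambda/\lambda$. Aside from these corrections, your plan matches the paper's proof.
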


\begin{rmk}
Note that by scaling, we can always make $\sup|\tau|$ arbitrary small so that $\beta$ is ``almost Neumann" with $\Lambda/\lambda$ changing in a universal way.
Indeed, for any $R>1$ and $u \in C(\overline{\Omega_1})$ satisfying the assumption of Theorem \ref{holder}, let
\begin{align*}
    \tilde{u}(x',x_n) = u\left(x',\frac{1}{R}x_n\right)
\end{align*}
then $\tilde{u}$ satisfies the following:
\begin{align*}
    D^2u = A \cdot D^2\tilde{u} \cdot A, \quad Du = A \cdot D\tilde{u}.
\end{align*}
where $A \in S(n)$ is a diagonal matrix $A =
\text{diag}(1,...,1,R)$. Note that $|Du| \leq R|D\tilde{u}|$ and
$\{|D\tilde{u}-\tilde{q}| >\theta\}$ is contained in $\{
|Du-q|>\theta\}$ where $\tilde{q} = \left(q',\frac{1}{R}q_n\right)$.
Moreover, since $I \leq A \leq RI$, for any symmetric matrices
$\lambda I \leq B \leq \Lambda I$, we get $\lambda I \leq ABA \leq
R^2\Lambda I$, which implies
$\mathcal{P}^-_{\lambda,\Lambda}(D^2{u},D{u}) \geq
\mathcal{P}^-_{\lambda,R^2\Lambda}(D^2\tilde{u},D\tilde{u})$.
Therefore, $\tilde{u} \in C(\overline{\tilde{\Omega}_1})$ satisfies
the following equations:
\begin{align*}
    \begin{cases}
        \mathcal{P}^-_{\lambda,R^2\Lambda}(D^2\tilde{u},D\tilde{u}) \leq C_0 &\text{ in } \{|D\tilde{u}-\tilde{q}|>\theta\} \cap \tilde{\Omega}_1, \\
        \mathcal{P}^+_{\lambda,R^2\Lambda}(D^2\tilde{u},D\tilde{u}) \geq -C_0 &\text{ in } \{|D\tilde{u}-\tilde{q}|>\theta\} \cap \tilde{\Omega}_1, \\
        \tilde{\beta} \cdot D\tilde{u} = g/R &\text{ in } \partial \tilde{\Omega}_1, \\
    \end{cases}
\end{align*}
where $\tilde{\beta} = \left(\frac{1}{R}\beta',\beta_n\right)$, $\tilde{\Omega} = \{(x',Rx_n) : (x',x_n)\in \Omega\}$ and $\varphi_{\tilde{\Omega}} = R\varphi_\Omega$.

Then we have $|\tilde{\beta}| \leq 1$, and $\tilde{\beta}_n \geq
\delta_0/2$. Moreover, since $\tilde{\tau} =
\frac{\beta'}{R\beta_n}$, we have $\sup|\tilde{\tau}| \leq
\frac{\sup|\tau|}{R}$ and we can make $\sup|\tilde{\tau}|$ smaller
than some universal constant by choosing a large $R=R(\delta_0)$.
Since $[\varphi_{\tilde{\Omega}}]_{C^1} \leq R[\varphi_\Omega]_{C^1}
\leq R\mu$, we may choose a small $\mu=\mu(\delta_0)$ so that $R\mu
< \delta_0 /2$. Then $\varphi_{\tilde{\Omega}}$ and $\tilde{\beta}$
also satisfy the assumption $\eqref{bcond2}$. Note that
$[u]_{C^{\alpha}(\Omega_{\frac{1}{2R}})} \leq
R^\alpha[\tilde{u}]_{C^{\alpha}(\tilde{\Omega}_{1/2})}$. Therefore,
if we prove Theorem \ref{holder} for small $\sup |\tau|$, then we
get $\tilde{u}  \in C^{\alpha}(\tilde{\Omega}_{1/2})$, and by using
standard covering and interior estimates, we can also prove $u \in
C^{\alpha}(\Omega_{1/2})$.
\end{rmk}

\begin{rmk}
For some $A \subset \mathbb{R}^n$, let $u$ satisfy
\begin{align*}
    \mathcal{P}^-(D^2u,Du) \leq 1 \ \text{ in } \{Du \in A\} \cap B_1,
\end{align*}
then for the scaled function $v(x) = \frac{u(rx)}{M}$ for $M>1$ and
$r<1$, we have
\begin{align*}
    \mathcal{P}^-(D^2v,Dv) \leq \frac{r^2}{M} \ \text{ in } \left\{Dv \in \frac{r}{M}A\right\} \cap B_{1/r}.
\end{align*}
If $\frac{r}{M}A \supset A$, then we have $\mathcal{P}^-(D^2v,Dv)
\leq 1$ in $\{Dv \in A\} \cap B_1$, which is the same assumption on
$u$. Thus, if we prove some result with the above assumption on $u$,
then we can prove the same result on the scaled function $v$ and
find more information about $u$.

For example, if $A = \{|x| \geq \theta\}$, which means the
inequality holds where the gradient is large, then $\frac{r}{M}A
= \left\{|x| \geq \frac{r}{M}\theta\right\} \supset A$ and we can use the
scaled function $v$. However, if $A = \{|x| \leq \theta\}$, which
means the inequality holds where the gradient is small, then
$\frac{r}{M}A = \left\{|x| \leq \frac{r}{M}\theta\right\}
\not\supset A$.

In particular, if $A = \{|x-q| \geq \theta\}$ for some $q \in
\mathbb{R}^n$, which means the inequality holds where $\{|Du-q| \geq
\theta\}$, then $\frac{r}{M}A = \left\{\left|x-\frac{r}{M}q\right| \geq
\frac{r}{M}\theta\right\} \not\supset A$, which implies the scaled
function does not satisfy the same assumption of $u$. But we observe
that the center $q$ moved to $\frac{r}{M}q$ and the radius $\theta$
became $\frac{r}{M}\theta$, smaller than $\theta$. From this
observation, we prove that $v$ satisfies the assumption below.

\begin{prop} \label{two}
    For any $q\in \mathbb{R}^n$, $0<\theta \leq 1$ and $u\in C(B_1)$ satisfying
    \begin{align*}
        \mathcal{P}^-(D^2u,Du) \leq 1 \ \text{ in } \{|Du-q|> \theta\} \cap B_1,
    \end{align*}
    the scaled function $v(x) = u(rx)/M$ where $0<r\leq 1$ and $M\geq 1$ satisfies either
    \begin{align*}
        \mathcal{P}^-(D^2v,Dv) \leq 1 \ \text{ in } \{|Dv| \leq A_0\} \cap B_{1/r}, \ \text{ or } \ \mathcal{P}^-(D^2v,Dv) \leq 1 \ \text{ in } \{|Dv| \geq A_0+2\} \cap B_{1/r},
    \end{align*}
    for any $A_0 >0$.
\end{prop}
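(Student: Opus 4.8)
The plan is to compute directly how the scaling $v(x)=u(rx)/M$ transforms the Pucci-type operator with gradient term, and then to observe that the gradient ball on which ellipticity (and hence the inequality) is lost has radius at most $1$, so it cannot meet both of the regions $\{|Dv|\le A_0\}$ and $\{|Dv|\ge A_0+2\}$, which are separated by a shell of width $2$.

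First I would record the scaling. Since $Dv(x)=\frac{r}{M}Du(rx)$ and $D^2v(x)=\frac{r^2}{M}D^2u(rx)$, and the factor $\frac{r^2}{M}$ is positive, we have $(D^2v(x))^{\pm}=\frac{r^2}{M}(D^2u(rx))^{\pm}$, so
\begin{align*}
\mathcal{P}^-(D^2v(x),Dv(x)) &= \frac{r^2}{M}\left(\lambda\tr (D^2u(rx))^+ - \Lambda\tr (D^2u(rx))^-\right) - \Lambda\frac{r}{M}|Du(rx)| \\
&\le \frac{r^2}{M}\,\mathcal{P}^-(D^2u(rx),Du(rx)),
\end{align*}
where the inequality uses $\frac{r}{M}\ge\frac{r^2}{M}$ (because $r\le 1$) together with the minus sign in front of the gradient term. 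Since moreover $\frac{r^2}{M}\le 1$, at every $x\in B_{1/r}$ with $|Du(rx)-q|>\theta$ (so that $\mathcal{P}^-(D^2u(rx),Du(rx))\le 1$) we obtain $\mathcal{P}^-(D^2v,Dv)\le\frac{r^2}{M}\le 1$. Rewriting the gradient condition via $Du(rx)=\frac{M}{r}Dv(x)$, this says
\begin{align*}
\mathcal{P}^-(D^2v,Dv)\le 1 \quad\text{in } \left\{|Dv-\tilde q|>\tilde\theta\right\}\cap B_{1/r}, \qquad \tilde q:=\frac{r}{M}q,\quad \tilde\theta:=\frac{r}{M}\theta,
\end{align*}
which is the content of the remark preceding the statement, specialized to $C_0=1$.

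Next I would use the dichotomy. Since $0<r\le1\le M$ and $0<\theta\le1$, we have $\tilde\theta=\frac{r}{M}\theta\le1$. Fix $A_0>0$. If $|\tilde q|>A_0+1$, then for every $p$ with $|p|\le A_0$ one has $|p-\tilde q|\ge|\tilde q|-|p|>1\ge\tilde\theta$, so $\{|p|\le A_0\}\subset\{|p-\tilde q|>\tilde\theta\}$ and hence $\mathcal{P}^-(D^2v,Dv)\le1$ in $\{|Dv|\le A_0\}\cap B_{1/r}$. If instead $|\tilde q|\le A_0+1$, then for every $p$ with $|p|\ge A_0+2$ one has $|p-\tilde q|\ge|p|-|\tilde q|\ge1\ge\tilde\theta$, so $\{|p|\ge A_0+2\}\subset\{|p-\tilde q|>\tilde\theta\}$ and hence $\mathcal{P}^-(D^2v,Dv)\le1$ in $\{|Dv|\ge A_0+2\}\cap B_{1/r}$. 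In either case one of the two stated alternatives holds.

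I do not expect a serious obstacle: the computation is elementary, and the only point requiring attention is the uniform bound $\tilde\theta\le1$, valid for every choice of $r$, $M$, $\theta$. This is exactly the mechanism that makes the non-scale-invariant hypothesis ``$\mathcal{P}^-(D^2u,Du)\le C_0$ where $|Du-q|>1$'' usable in the iteration of Section \ref{sec3}: after rescaling, ellipticity is lost only on a gradient ball of radius $\le1$, which is disjoint from one of $\{|Dv|\le A_0\}$ and $\{|Dv|\ge A_0+2\}$, and on each of these two regimes a separate boundary Hölder estimate (sliding paraboloids, respectively sliding cusps) is available.
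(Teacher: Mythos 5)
Your argument is correct and is essentially the paper's proof: after computing that $\mathcal{P}^-(D^2v,Dv)\le\frac{r^2}{M}\le 1$ on the rescaled good set $\{|Dv-\tfrac{r}{M}q|>\tfrac{r}{M}\theta\}\cap B_{1/r}$ (the paper states this directly, you spell out the intermediate inequality using $r\le 1$, $M\ge 1$), the dichotomy on $|\tfrac{r}{M}q|$ versus $A_0+1$ and the width-$2$ separation of the two gradient regimes is exactly the mechanism used in the paper. The only cosmetic difference is that you split at $|\tilde q|>A_0+1$ versus $\le A_0+1$ while the paper allows the two cases to overlap at equality; both versions yield the same conclusion.
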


\begin{proof}
Note that $\mathcal{P}^-(D^2v,Dv) \leq \frac{r^2}{M} \leq 1 \ \text{ in } \left\{\left|Dv-\frac{r}{M}q\right| \geq \frac{r}{M} \theta\right\} \cap B_{1/r}$.

If $\left|\frac{r}{M}q\right| \geq A_0+1$, then $\{|x| \leq A_0\} \subset \left\{\left|x-\frac{r}{M}q\right| \geq \frac{r}{M}\theta\right\}$.
This is because if $|x| \leq A_0$, then $\left|x-\frac{r}{M}q\right| \geq \left|\frac{r}{M}q\right| -|x| \geq 1 \geq \frac{r}{M}\theta$.
Therefore, we have $\mathcal{P}^-(D^2v,Dv) \leq 1 \ \text{ in } \{|Dv| \leq A_0\} \cap B_{1/r}$.

If $\left|\frac{r}{M}q\right| \leq A_0+1$, then also $\{|x| \geq A_0+2\} \subset \left\{\left|x-\frac{r}{M}q\right| \geq \frac{r}{M}\right\}$.
This is because if $|x| \geq A_0+2$, then $\left|x-\frac{r}{M}q\right| \geq |x|-\left|\frac{r}{M}q\right| \geq 1 \geq \frac{r}{M}\theta$.
Therefore, we have $\mathcal{P}^-(D^2v,Dv) \leq 1 \ \text{ in } \{|Dv| \geq A_0+2\} \cap B_{1/r}$.
\end{proof}
Therefore, we instead find some result for both of inequalities, one holds where $\{|Du| \leq A_0\}$ and the other holds where $\{|Du| \geq A_0+2\}$. Then since the scaled function $v$ satisfies either of the inequalities, we can prove the same result on $v$ whatever $q$ is.
\end{rmk}

Now, we prove the interior measure estimate lemma following the idea of \cite{Imbert16}, using a paraboloid instead of a cusp since we first consider the equation which is uniformly elliptic where the gradient is small.
\begin{lem} \label{A00}
    There exist small $\epsilon_0>0$ and large $K>0$, $A_0>0$ such that for any $u \in C(\overline{B_1})$ satisfying
    \begin{align*}
    \begin{cases}
        u \geq 0 \text{ in } B_1,\\
        \mathcal{P}^-(D^2u,Du) \leq 1 \text{ in } \{|Du| \leq A_0\}\cap B_1, \\
        |\{u>K\} \cap B_1| \geq (1-\epsilon_0)|B_1|,
    \end{cases}
    \end{align*}
    we have $u>1$ in $B_{1/4}$.
\end{lem}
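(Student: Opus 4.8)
The plan is to prove the statement by contradiction using the sliding paraboloid method, which is the natural counterpart, in the regime where the equation is uniformly elliptic only where the gradient is \emph{small}, of the sliding cusp method of \cite{Imbert16}. Suppose $u(x_0)\le 1$ for some $x_0\in B_{1/4}$. Fix a constant $a>0$, large enough that $a\ge 4$, and set $A_0:=\tfrac{5a}{2}$ and $K:=1+\tfrac a4$; the number $\epsilon_0$ is chosen small at the very end. For each vertex $y\in B_{1/4}$ let
\begin{align*}
c_y:=\inf_{x\in B_1}\left(u(x)+a|x-y|^2\right),
\end{align*}
so that $P_y(x):=c_y-a|x-y|^2$ is the highest concave paraboloid of opening $2a$ and vertex $y$ lying below $u$ on $B_1$, and (using continuity of $u$ on $\overline{B_1}$) pick a contact point $x_y\in\overline{B_1}$ with $u(x_y)=P_y(x_y)$ and $u\ge P_y$ on $B_1$.

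First I would record three elementary facts about the contact points. Testing $u\ge P_y$ at $x_0$ gives $c_y\le u(x_0)+a|x_0-y|^2<1+\tfrac a4=K$, hence $u(x_y)\le c_y<K$. If $x_y\in\partial B_1$, then $|x_y-y|>\tfrac34$, so $u(x_y)=c_y-a|x_y-y|^2<1+\tfrac a4-\tfrac{9a}{16}=1-\tfrac{5a}{16}<0$, contradicting $u\ge0$; hence $x_y\in B_1$. Since $P_y$ touches $u$ from below at the interior point $x_y$, in the viscosity sense $Du(x_y)=DP_y(x_y)=-2a(x_y-y)$, so $|Du(x_y)|\le 2a|x_y-y|<\tfrac{5a}{2}=A_0$, and therefore the inequality $\mathcal P^-(D^2u,Du)\le 1$ is in force at every contact point. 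In particular the contact set $\Gamma:=\{x_y:y\in B_{1/4}\}$ satisfies $\Gamma\subset\{u\le K\}\cap B_1$, a set whose measure is at most $\epsilon_0|B_1|$ by hypothesis.

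The core of the proof is the ABP-type lower bound $|\Gamma|\ge c_0(n,\lambda,\Lambda)>0$. For this I would pass to the upper envelope of the contact paraboloids,
\begin{align*}
\widehat u(x):=\sup_{y\in B_{1/4}}\left(c_y-a|x-y|^2\right),
\end{align*}
which lies below $u$ on $B_1$, satisfies $\{\widehat u=u\}\cap B_1\supset\Gamma$, and is semiconvex in the strong sense that $g(x):=a|x|^2+\widehat u(x)$ is a supremum of affine functions, hence convex. By Alexandrov's theorem $g$ (hence $\widehat u$) is twice differentiable a.e., with $D^2\widehat u\ge-2aI$ a.e. by semiconvexity; and at a.e.\ $z\in\Gamma$ the second-order Taylor polynomial of $\widehat u$ at $z$, shifted down by an arbitrarily small quadratic, is a legitimate test function touching $u$ from below at $z$ (legitimate because $u=\widehat u$ at $z$, $\widehat u$ dominates all the $P_y$, and $|D\widehat u(z)|<A_0$), so letting the downward shift tend to $0$ yields $\mathcal P^-(D^2\widehat u(z),D\widehat u(z))\le 1$. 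Together with the bound $2an$ on the trace of the negative part of $D^2\widehat u(z)$ and $|D\widehat u(z)|<A_0$, this forces $D^2\widehat u(z)\le C_*I$ a.e.\ on $\Gamma$, with $C_*=C_*(n,\lambda,\Lambda,a)$, hence $D^2 g\le(2a+C_*)I$ a.e.\ on $\Gamma$. Finally the map $S(z):=z+\tfrac1{2a}D\widehat u(z)=\tfrac1{2a}Dg(z)$ sends each $x_y$ to $y$; more precisely $2ay\in\partial g(x_y)$, so $B_{1/4}\subset\tfrac1{2a}\,\partial g(\Gamma)$, and the area formula for the subdifferential of the convex function $g$ gives
\begin{align*}
|B_{1/4}|\le\left|\tfrac1{2a}\,\partial g(\Gamma)\right|\le(2a)^{-n}\int_\Gamma\det D^2g\le\left(1+\tfrac{C_*}{2a}\right)^n|\Gamma|,
\end{align*}
i.e.\ $|\Gamma|\ge c_0:=4^{-n}|B_1|\,(1+C_*/2a)^{-n}$.

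Choosing $\epsilon_0:=c_0/(2|B_1|)$ now produces a contradiction: $\epsilon_0|B_1|\ge|\{u\le K\}\cap B_1|\ge|\Gamma|\ge c_0=2\epsilon_0|B_1|$. Hence no such $x_0$ exists and $u>1$ in $B_{1/4}$. I expect the main obstacle to be the measure estimate of the third paragraph: one must rigorously justify, for a merely continuous viscosity supersolution, both the pointwise bound $D^2\widehat u\le C_*I$ a.e.\ on $\Gamma$ and the use of the area formula for the subdifferential — this is the classical ABP/$L^\varepsilon$ machinery of \cite{Caffarelli95} (and of \cite{Imbert16} in the cusp version), the only genuinely new point being the bookkeeping that keeps every contact gradient strictly below the threshold $A_0$, so that the one-sided equation is available exactly where it is needed.
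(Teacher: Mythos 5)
Your overall strategy — slide paraboloids of fixed opening from below, use the contradiction hypothesis $u(x_0)\le 1$ to trap the contact points inside $B_1\cap\{u<K\}$, bound the contact gradient by $A_0$ so the one-sided Pucci inequality is available, and then close the argument with a measure-theoretic surjectivity-plus-Jacobian estimate from contact points to vertices — is exactly the paper's. Your bookkeeping is also right: the choices $A_0\gtrsim a$, $K=1+a/4$, $|x_y-y|<5/4$, and the exclusion of boundary touching points all check out, and using all of $B_{1/4}$ as vertices rather than $G=\{u>K\}\cap B_{1/4}$ (as the paper does) is a harmless variant.

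The genuine gap is in the third paragraph, at the inequality $\left|\tfrac1{2a}\partial g(\Gamma)\right|\le(2a)^{-n}\int_\Gamma\det D^2g$. For a merely convex $g$ and a Borel set $\Gamma$, the Monge--Amp\`ere measure satisfies $|\partial g(\Gamma)|=\int_\Gamma\det D^2g+\mu_s(\Gamma)$ with a possibly nonzero singular part $\mu_s$, so this inequality goes the wrong way unless one can show $\mu_s(\Gamma)=0$. In the classical ABP setting of \cite{Caffarelli95} that step is handled by proving the convex envelope is $C^{1,1}$ on the contact set, and the proof of that crucially uses a paraboloid touching the envelope \emph{from above} at each contact point. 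In your setup the envelope $\widehat u$ is only semiconvex (paraboloids from below, with opening $2a$), and there is no paraboloid touching $\widehat u$ (or $u$) from above at a contact point for a general continuous supersolution: the supersolution inequality only constrains test functions touching from below. So the ``classical ABP/$L^\varepsilon$ machinery'' you point to does not apply verbatim, and the a.e.\ Hessian bound $D^2\widehat u\le C_*I$ on $\Gamma$ is not by itself enough to justify the area formula. The paper sidesteps this by first assuming $u\in C^2$ — where the touching map $y\mapsto y+\tfrac1{20}Du(y)$ is single-valued, $C^1$, with Jacobian $\le C$, and the area formula is elementary — and then handling semiconcave and then general continuous $u$ by citing the inf-convolution argument in \cite{Imbert16}. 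To make your version rigorous you would do the same: replace $u$ by its inf-convolution $u_\varepsilon$, which is semiconcave (hence its paraboloid envelope $\widehat{u_\varepsilon}$ is genuinely $C^{1,1}$ and the area formula is licit), a supersolution in a slightly smaller ball with the same gradient-threshold, and converges to $u$ as $\varepsilon\to0$; the constants $K,A_0,\epsilon_0$ are unaffected. With that regularization step inserted, your argument becomes a complete proof along the same lines as the paper's.
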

\begin{proof}
 First, assume that $u\in C^2(B_1)\cap C(\overline{B_1})$. We prove this lemma by
contradiction. We assume the contrary that for all $\epsilon_0$, $K$
and $A_0$, we can find $u$ such that the above conditions hold but
$u(x_0) \leq 1$ for some $x_0 \in B_{1/4}$. Consider $G = \{u>K\}
\cap B_{1/4}$. Then $|G| \geq |B_{1/4}|-\epsilon_0|B_1| =
(c-\epsilon_0)|B_1|>0$ for small $\epsilon_0 < c=c(n)$.

We slide a paraboloid $\phi(z) = -10|z-x|^2$ with vertex $x \in G$ from below, until it touches the graph of $u$ for the first time at a point $y \in \overline{B_1}$. Then we have
\begin{align} \label{A001}
    u(y) + 10|y-x|^{2} = \inf_{z\in\overline{B_1}}\{u(z) + 10|z-x|^{2}\}.
\end{align}

We claim that $y \in B_1$.
If $y \in \partial B_1$, then since $u \geq 0$ and $x \in B_{1/4}$, we have
\begin{align*}
    u(y) + 10|y-x|^2 > 10\left|1-\frac{1}{4}\right|^2.
\end{align*}
However, since $x_0 \in B_{1/4}$,
\begin{align*}
    u(x_0) + 10|x_0-x|^2 \leq 1+ 10\left|\frac{1}{2}\right|^2 < 10\left|1-\frac{1}{4}\right|^2.
\end{align*}
which contradicts with \eqref{A001}.

Let $K=1+ 10\left|\frac{1}{2}\right|^2$, then $u(y)<K$. Note that we
have
\begin{align*}
    Du(y) &= D\phi(y) = 20(x-y),\\
    D^2u(y) &\geq D^2\phi(y) = -20I.
\end{align*}
Choosing $A_0>40 \geq \sup_{B_1}|D\phi(y)|$, we have $|Du| \leq A_0$
at a touching point $y$. Therefore, we get $|D^2u(y)| \leq
C(\lambda,\Lambda)$ since $\mathcal{P}^-(D^2u,Du) \leq 1$. Observe
that $x= y+\frac{1}{20}Du(y)$, so let $U \subset B_1$ be the set of
touching points $y$ and define a map $m:U \rightarrow G$ by $m(y): =
x$. Then we have $U \subset \{u<K\}\cap B_1$, $m(U) = G$ and
$|Dm(y)| = \left|I+\frac{1}{20}D^2u(y)\right| \leq C$. Therefore,
using the area formula for $m:U \rightarrow G$,
\begin{align*}
    (c-\epsilon_0)|B_1| \leq |G| =\int_{U}|\det Dm(y)|dy \leq C|U| \leq C\epsilon_0|B_1|,
\end{align*}
which is a contradiction if we choose $\epsilon_0$ small enough.

If $u$ is semiconcave, we can repeat the argument as in the proof of Proposition 3.5 in \cite{Imbert16}.
\end{proof}

Note that if $[\varphi_\Omega]_{C^1} \leq  \mu \leq 1/6$, $\Omega_1$ satisfies $\rho$-corkscrew condition with $\rho =1/8$ by Proposition \ref{cork}.
Since the corkscrew condition is scaling invariant, we can say that $\Omega_{\tilde{\rho}+1} =\Omega_{33}$ satisfies $1/8$-corkscrew condition when $[\varphi_\Omega]_{C^1} < \mu \leq 1/6$.

Now we consider the barrier function $b(x) = |x-\sigma e_n|^{-p}$ for some $\sigma \geq 1$.
Then for $x \in B_{\tilde{\rho}+1}(\sigma e_n)$,
\begin{align*}
    \mathcal{P}^-(D^2b, Db) &= \lambda p(p+1)|x-\sigma e_n|^{-p-2} - \Lambda(n-1)p|x-\sigma e_n |^{-p-2} - \Lambda p|x-\sigma e_n|^{-p-1} \\
    &\geq p|x-\sigma e_n|^{-p-2} \geq p(\tilde{\rho}+1)^{-p-2},
\end{align*}
if $p=p(n,\lambda,\Lambda)$ is large enough.
Moreover, for $x \in \partial \Omega_{\tilde{\rho}+1}(\sigma e_n)$, we have $|x_n| \leq (\tilde{\rho}+1)[\varphi_\Omega]_{C^1} \leq (\tilde{\rho}+1)\mu$ and thus
\begin{align*}
    \beta \cdot Db(x) &= -p|x-\sigma e_n|^{-p-2} (x-\sigma e_n)\cdot \beta \\
    &= \beta_n p|x-\sigma e_n|^{-p-2}(\sigma  - x_n - \tau \cdot x') \\
    &\geq \frac{\delta_0}{2} p|x-\sigma e_n|^{-p-2}(1 - (\tilde{\rho}+1)\mu - (\tilde{\rho}+1)\tau_0) \\
    &> \delta_0 p(\tilde{\rho}+1)^{-p-3}\left(\frac{1}{4}\right) =: \eta_0 >0,
\end{align*}
if $\sup|\tau| \leq \tau_0 < 1/(2\tilde{\rho}+2)$ and $\mu < 1/(2\tilde{\rho}+2)$ is small enough.

Using the above barrier function, we prove the doubling type lemma for the ``almost Neumann" condition on ``almost $C^1$-flat" domain as in \cite{Banerjee22}.
\begin{lem} \label{A01}
    There exist small $\tau_0>0$, $\mu >0$, $\eta>0$, and large $K>1$, $A_0>1$ such that if $\sigma \geq 1$, $\sup|\tau| < \tau_0$ and $[\varphi_\Omega]_{C^1} < \mu$, for any $u \in C(\Omega_{\tilde{\rho}+1}(\sigma e_n))$ satisfying
    \begin{align*}
    \begin{cases}
        u \geq 0 \text{ in } \Omega_{\tilde{\rho}+1}(\sigma e_n),\\
        \mathcal{P}^-(D^2u,Du) \leq 1 \text{ in } \{|Du| \leq A_0\}\cap \Omega_{\tilde{\rho}+1}(\sigma e_n), \\
        \beta \cdot Du \leq \eta \text{ on } \partial \Omega_{\tilde{\rho}+1}(\sigma e_n), \\
        u>K \text{ in } B_{1/4}(\sigma e_n),
    \end{cases}
    \end{align*}
    then $u>1$ in $\Omega_{\tilde{\rho}}(\sigma e_n)$.
\end{lem}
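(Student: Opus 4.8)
The plan is to combine the interior measure estimate Lemma \ref{A00} with the growing ink-spot Lemma \ref{ink} and the explicit barrier $b(x)=|x-\sigma e_n|^{-p}$ constructed just above, following the doubling scheme of \cite{Banerjee22} but phrased for the corkscrew domain. First I would fix the constants: choose $p=p(n,\lambda,\Lambda)$ so that $\mathcal{P}^-(D^2b,Db)\geq p(\tilde{\rho}+1)^{-p-2}$ on $B_{\tilde{\rho}+1}(\sigma e_n)$ and $\beta\cdot Db\geq\eta_0$ on $\partial\Omega_{\tilde{\rho}+1}(\sigma e_n)$, with $\tau_0,\mu<1/(2\tilde{\rho}+2)$ as in the computation preceding the lemma; then I would set $\eta:=\eta_0$ and take $\epsilon_0,K,A_0$ from Lemma \ref{A00} (enlarging $K$, $A_0$ and shrinking $\epsilon_0$ further below if needed). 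The strategy is to show $|\{u>1\}\cap\Omega_{\tilde{\rho}}(\sigma e_n)|$ is so large (a full-measure statement, in fact) that it must be all of $\Omega_{\tilde{\rho}}(\sigma e_n)$; more directly, I will apply the ink-spot lemma to propagate the information ``$u$ is large'' from $B_{1/4}(\sigma e_n)$ outward.

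The core step is a local doubling claim: \emph{if $B=B_s(z)\subset\Omega_{\tilde{\rho}+1}(\sigma e_n)$ satisfies $|B\cap\{u>K\}|>(1-\epsilon_0)|B|$, then $\tilde{\rho}B\cap\Omega\subset\{u>1\}$} (after rescaling Lemma \ref{A00} to the ball $B$ so that the density and the inequality $\mathcal{P}^-(D^2u,Du)\leq 1$ on $\{|Du|\leq A_0\}$ survive the rescaling; here one uses that $\mathcal{P}^-$ scales favorably under $u\mapsto u(sx+z)/s$ for $s<1$, and that the gradient threshold $A_0$ is only improved). Actually, for the version needed by Lemma \ref{ink} I want the conclusion phrased with the sets $E=\{u>K\}\cap\Omega_{\tilde{\rho}+1}$ and $F=\{u>1\}\cap\Omega_{\tilde{\rho}+1}$: condition (2) of Lemma \ref{ink} becomes exactly the local doubling claim, so Lemma \ref{ink} gives $|E|\leq(1-3^{-n}\rho^n\epsilon_0)|F|$. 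Iterating, or rather arguing by contradiction: if $\Omega_{\tilde{\rho}}(\sigma e_n)\not\subset\{u>1\}$ we would exhibit a point where the barrier comparison below forces $u>1$ anyway — so instead I prefer the direct contradiction argument of the next paragraph.

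The barrier is what closes the gap between the measure statement and the pointwise statement and handles the oblique boundary. Suppose for contradiction $u(x^\ast)\leq 1$ for some $x^\ast\in\Omega_{\tilde{\rho}}(\sigma e_n)$. Since $u>K$ on $B_{1/4}(\sigma e_n)$ and $b$ is bounded on $\overline{\Omega_{\tilde{\rho}+1}(\sigma e_n)}$ while $b\geq(\tilde{\rho}+1)^{-p}$ there, I would slide the barrier: consider $w=u-tb$ for $t>0$ and choose $t$ maximal so that $w\geq 0$ on $\overline{\Omega_{\tilde{\rho}+1}(\sigma e_n)}$, or more cleanly compare $u$ with $c_1 b - c_2$ on the annular region $\Omega_{\tilde{\rho}+1}(\sigma e_n)\setminus B_{1/4}(\sigma e_n)$, choosing $c_1,c_2$ (depending only on $n,\lambda,\Lambda,\tilde{\rho}$) so that $c_1 b - c_2 \leq 0 < K$ on $\partial B_{1/4}(\sigma e_n)$ and $c_1 b - c_2 > 1$ on $\partial B_{\tilde{\rho}}(\sigma e_n)$; this is possible precisely because $b$ is strictly decreasing in $|x-\sigma e_n|$. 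On the set $\{|Du|\leq A_0\}$ one has $\mathcal{P}^-(D^2u,Du)\leq 1 < p(\tilde{\rho}+1)^{-p-2}\cdot c_1 \leq \mathcal{P}^-(D^2(c_1 b),D(c_1 b))$ after adjusting $c_1$, and on $\partial\Omega$ one has $\beta\cdot Du\leq\eta=\eta_0 < c_1\eta_0 \leq \beta\cdot D(c_1 b)$, so the comparison principle for $\mathcal{P}^-$ with oblique boundary data (valid here since $u$ is a subsolution wherever its gradient is controlled, and the barrier forces the touching point to have small gradient, exactly as in the proof of Lemma \ref{A00}) yields $u\geq c_1 b - c_2 > 1$ on $\Omega_{\tilde{\rho}}(\sigma e_n)$, contradicting $u(x^\ast)\leq 1$. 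The main obstacle is making the comparison/touching-point argument rigorous in the viscosity (semiconcave) setting on the $C^1$ domain: one must ensure the first touching point of $u$ and the barrier cannot lie on $\partial\Omega_{\tilde{\rho}+1}(\sigma e_n)$ (ruled out by the strict boundary inequality $\beta\cdot D(c_1b)>\eta_0\geq\beta\cdot Du$ together with $\beta_n\geq\delta_0/2$), cannot lie on $\partial B_{\tilde{\rho}+1}(\sigma e_n)$ or $\partial B_{1/4}(\sigma e_n)$ (ruled out by the size choices $K$ large, $c_2$ chosen so $c_1 b - c_2 < u$ there), and at an interior touching point the gradient is forced into $\{|Du|\leq A_0\}$ so the differential inequality applies — this is precisely the place where the enlargement of $A_0$ and the explicit form of $b$ are used, and it mirrors the semiconcave reduction invoked at the end of Lemma \ref{A00}.
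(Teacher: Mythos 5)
Your barrier-comparison argument on the annulus $\Omega_{\tilde{\rho}+1}(\sigma e_n)\setminus B_{1/4}(\sigma e_n)$ is exactly the paper's approach: the paper uses $B(x)=\frac{K}{2\cdot 4^p}\bigl(|x-\sigma e_n|^{-p}-(\tilde{\rho}+1)^{-p}\bigr)$, rules out touching on the two spherical boundary pieces and on $\partial\Omega$, and gets an interior contradiction because $A_0$ is chosen as $\sup|DB|$ over the annulus so the touching point always lands in $\{|Du|\leq A_0\}$. The first half of your write-up (ink-spot lemma, level sets $E,F$) is a digression that you correctly discard; it is not needed for this lemma, which is a pointwise doubling statement, not a measure estimate.

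One correction of detail: the sentence ``choosing $c_1,c_2$ so that $c_1 b - c_2\leq 0 < K$ on $\partial B_{1/4}(\sigma e_n)$ and $c_1 b - c_2>1$ on $\partial B_{\tilde{\rho}}(\sigma e_n)$'' is internally inconsistent. Since $b=|x-\sigma e_n|^{-p}$ is \emph{decreasing} in $|x-\sigma e_n|$ and $1/4 < \tilde{\rho}$, requiring $c_1b-c_2\leq 0$ at radius $1/4$ forces $c_1b-c_2<0$ at radius $\tilde{\rho}$, contradicting the second requirement. The correct placement is the paper's: $c_1b-c_2\leq 0$ on the \emph{outer} sphere $\partial B_{\tilde{\rho}+1}(\sigma e_n)$ (where $u\geq 0$ suffices), $c_1b-c_2<K$ on the inner sphere $\partial B_{1/4}(\sigma e_n)$ (where $u>K$), and $c_1b-c_2>1$ on $\partial B_{\tilde{\rho}}(\sigma e_n)$ is then the output. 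Your final paragraph in fact lists the boundaries correctly (``cannot lie on $\partial B_{\tilde{\rho}+1}$ or $\partial B_{1/4}$''), so this appears to be a transcription slip rather than a conceptual gap; once it is fixed, the proof goes through as in the paper.
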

This lemma deals with both interior and boundary cases.
If $\sigma$ is large enough so that $B_{\tilde{\rho}+1}(\sigma e_n)\subset \Omega$, then $\partial \Omega_{\tilde{\rho}+1}(\sigma e_n) = \emptyset$ and it is just the interior estimate.

\begin{proof}
Note that $\overline{B_{1/4}(\sigma e_n)} \subset \Omega$ for small enough $\mu>0$.
We compare $u$ with the following barrier function
\begin{align*}
    B(x) = \frac{K}{2 \cdot 4^p}(|x-\sigma e_n|^{-p}-(\tilde{\rho}+1)^{-p}).
\end{align*}
Then for $\tau_0, \mu < 1/(2\tilde{\rho}+2)$ and $\eta<\eta_0$, we have the following properties:
\begin{enumerate}
    \item $B(x) \leq 0$ in $\mathbb{R}^n \setminus B_{\tilde{\rho}+1}(\sigma e_n)$,
    \item $B(x) < K$ for any $x \in  \partial B_{1/4}(\sigma e_n)$,
    \item $\beta \cdot DB >\frac{K}{2 \cdot 4^p}\eta_0$ on $\partial \Omega_{\tilde{\rho}+1}(\sigma e_n)$.
\end{enumerate}
Moreover, choosing a large $K=K(n,\lambda,\Lambda)\geq 2 \cdot 4^p$ and
letting $A_0 =
\sup_{B_{\tilde{\rho}+1}(\sigma e_n) \setminus B_{1/4}(\sigma
e_n)}|DB|$, then we have
\begin{enumerate}
    \item[(4)] $\mathcal{P}^-(D^2B, DB) \geq \frac{K}{2 \cdot 4^p}p(\tilde{\rho}+1)^{-p-2} \geq 2$ in $\{|DB| \leq A_0\}\cap B_{\tilde{\rho}+1}(\sigma e_n) \setminus B_{1/4}(\sigma e_n)$,
    \item[(5)] $B(x)> 1$ in $B_{\tilde{\rho}}(\sigma e_n)$.
\end{enumerate}
Finally we claim that $u \geq B$ in $ \Omega_{\tilde{\rho}+1}(\sigma e_n) \setminus B_{1/4}(\sigma e_n)$. We
assume the contrary that $u-B$ has a negative minimum at $x_0 \in
\overline{ \Omega_{\tilde{\rho}+1}(\sigma e_n) \setminus B_{1/4}(\sigma e_n)}$.
Then from (1) and (2), $x_0$ cannot be on
$\Omega \cap \partial B_{\tilde{\rho}+1}(\sigma e_n)$ or $\partial
B_{1/4}(\sigma e_n)$. If $x_0 \in \partial \Omega \cap
B_{\tilde{\rho}+1}(\sigma e_n)$, then $\beta \cdot DB(x_0) \leq
\eta <\eta_0$, which contradicts with (3). If $x_0$ is in the interior, then
we have $\mathcal{P}^-(D^2B, DB)(x_0) \leq 1$, which also contradicts
with (4).

Therefore we get $u>1$ in $\Omega_{\tilde{\rho}}(\sigma e_n)$ by (5) and prove the lemma.
\end{proof}
Combining the Lemma \ref{A00} and Lemma \ref{A01}, we obtain the following corollary.
\begin{cor} \label{A0}
    There exist small $\tau_0>0$, $\mu>0$, $\eta>0$, $\epsilon_0>0$, and large $K>1$, $A_0>1$ such that if $\sigma \geq 1$, $B_1(\sigma e_n) \subset \Omega$, $\sup|\tau| \leq \tau_0$ and $[\varphi_\Omega]_{C^1} < \mu$, for any $u \in C( \overline{\Omega_{\tilde{\rho}+1}(\sigma e_n)})$ satisfying
    \begin{align*}
    \begin{cases}
        u \geq 0 \text{ in } \Omega_{\tilde{\rho}+1}(\sigma e_n),\\
        \mathcal{P}^-(D^2u,Du) \leq 1 \text{ in } \{|Du| \leq A_0\}\cap \Omega_{\tilde{\rho}+1}(\sigma e_n), \\
        \beta \cdot Du \leq \eta \text{ on } \partial \Omega_{\tilde{\rho}+1}(\sigma e_n), \\
        |\{u>K\} \cap B_1(\sigma e_n)| \geq (1-\epsilon_0)|B_1|,
    \end{cases}
    \end{align*}
    we have $u>1$ in $\Omega_{\tilde{\rho}}(\sigma e_n)$.
\end{cor}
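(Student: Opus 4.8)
The plan is to obtain Corollary \ref{A0} by directly chaining Lemma \ref{A00} with Lemma \ref{A01}. The only genuine point is a mismatch in the thresholds: Lemma \ref{A00} concludes that $u>1$ on $B_{1/4}(\sigma e_n)$, whereas Lemma \ref{A01} requires $u>K$ there for its large universal constant $K$. Since $\mathcal{P}^-$ is positively $1$-homogeneous in $(D^2u,Du)$, I would bridge this by applying Lemma \ref{A00} to the rescaled function $u/K$ in place of $u$, paying for it by enlarging the corollary's constant $A_0$ so that it absorbs the attendant dilation of the gradient-restriction set $\{|Du|\le A_0\}$.

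Here is how I would arrange the constants. First invoke Lemma \ref{A01} to fix $\tau_0,\mu,\eta$ — shrinking $\tau_0$ slightly so that the corollary's non-strict bound $\sup|\tau|\le\tau_0$ forces the strict one needed by Lemma \ref{A01} — and denote by $K_2>1$, $A_2>1$ the constants it supplies. Then invoke Lemma \ref{A00}, obtaining $\epsilon_0>0$, $K_1>0$, $A_1>0$, and set the corollary's constants to be this $\epsilon_0$ together with $K:=K_1K_2$ and $A_0:=\max\{K_2A_1,\,A_2\}$. Given $u$ as in the statement, put $w:=u/K_2$. Since $B_1(\sigma e_n)\subset\Omega$ and $B_1(\sigma e_n)\subset\Omega_{\tilde{\rho}+1}(\sigma e_n)$, the function $w$ is continuous on $\overline{B_1(\sigma e_n)}$, nonnegative there, and — by the $1$-homogeneity of $\mathcal{P}^-$ and the choice $A_0\ge K_2A_1$ — satisfies $\mathcal{P}^-(D^2w,Dw)\le K_2^{-1}\le 1$ on $\{|Dw|\le A_1\}\cap B_1(\sigma e_n)$; moreover $\{w>K_1\}=\{u>K\}$, so the measure hypothesis of Lemma \ref{A00} holds with our $\epsilon_0$. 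Applying Lemma \ref{A00} on the ball $B_1(\sigma e_n)$ — legitimate because this ball lies in the interior of $\Omega$, so no boundary term is seen — yields $w>1$, i.e. $u>K_2$, on $B_{1/4}(\sigma e_n)$.

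Finally I would feed this back into Lemma \ref{A01} applied to $u$: the hypotheses $u\ge0$ on $\Omega_{\tilde{\rho}+1}(\sigma e_n)$, $\beta\cdot Du\le\eta$ on $\partial\Omega_{\tilde{\rho}+1}(\sigma e_n)$, and $\mathcal{P}^-(D^2u,Du)\le1$ on $\{|Du|\le A_2\}\cap\Omega_{\tilde{\rho}+1}(\sigma e_n)$ hold (the last since $A_0\ge A_2$), and $u>K_2$ on $B_{1/4}(\sigma e_n)$ has just been shown. Lemma \ref{A01} then gives $u>1$ on $\Omega_{\tilde{\rho}}(\sigma e_n)$, which is the claim. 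I do not expect a real obstacle: all the analytic substance — the sliding-paraboloid measure estimate and the barrier comparison over the corkscrew ball — already sits in Lemmas \ref{A00} and \ref{A01}, and what is left is the bookkeeping above. The only mildly delicate point is checking that the scaling $u\mapsto u/K_2$ respects the viscosity formulation of the partial inequality $\mathcal{P}^-(D^2u,Du)\le1$ on $\{|Du|\le A_0\}$, which is immediate upon testing with $K_2\phi$ instead of $\phi$.
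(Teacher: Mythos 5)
Your proof is correct and follows essentially the same route as the paper's: you fix $K=K_1K_2$, $A_0=\max\{K_2A_1,A_2\}$, apply Lemma \ref{A00} to the rescaled function $u/K_2$ on the interior ball $B_1(\sigma e_n)$ (which the paper writes as $v(x)=u(x+\sigma e_n)/K_2$) to obtain $u>K_2$ on $B_{1/4}(\sigma e_n)$, and then feed that into Lemma \ref{A01}. The bookkeeping about how the $1$-homogeneity of $\mathcal{P}^-$ shifts the gradient-restriction set and why $A_0\geq K_2A_1$ compensates is exactly the point the paper's terse proof leaves implicit, and you have it right.
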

\begin{proof}
    Let $K_1$, $K_2$ and $A_{01}$, $A_{02}$ be the constants from Lemmas \ref{A00} and \ref{A01} respectively.
    We choose $K=K_1 K_2$ and $A_0 = \max \{A_{01}K_2,A_{02}\}$.
    Then since $B_1(\sigma e_n) \subset \Omega$, $v \in C(\overline{B_1})$ defined by $v(x)=u(x+\sigma e_n)/K_2$ satisfies the assumption of Lemma \ref{A00}, therefore $v>1$ in $B_{1/4}$.
    Thus $u$ satisfies the assumption of Lemma \ref{A01}, and we conclude that $u>1$ in $\Omega_{\tilde{\rho}}(\sigma e_n)$.
\end{proof}

Now we consider the equation which is uniformly elliptic where the gradient is large. We again prove the interior measure estimate lemma similar with Lemma \ref{A00} but by sliding cusp method instead of a paraboloid.

\begin{lem} \label{A10}
    For any $A_1>1$, there exist small  $\epsilon_0>0$ and large $K>1$ such that for any $u \in C(\overline{B_1})$ satisfying
    \begin{align*}
    \begin{cases}
        u \geq 0 \text{ in } B_1,\\
        \mathcal{P}^-(D^2u,Du) \leq 1 \text{ in } \{|Du| \geq A_1\}\cap B_1, \\
        |\{u>K\} \cap B_1| \geq (1-\epsilon_0)|B_1|,
    \end{cases}
    \end{align*}
    we have $u>1$ in $B_{1/4}$.
\end{lem}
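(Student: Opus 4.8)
The plan is to mimic the proof of Lemma \ref{A00}, but replace the sliding paraboloid $\phi(z)=-10|z-x|^2$ — whose gradient is uniformly bounded, and so forces the touching point into the region $\{|Du|\le A_0\}$ — by a sliding \emph{cusp} $\phi(z)=-C_1|z-x|^{-\kappa}$ (for suitable $C_1,\kappa>0$ depending on $n$), whose gradient blows up as $z\to x$ and so, after a geometric argument, forces the touching point $y$ to satisfy $|Du(y)|\ge A_1$, placing us in the region where the inequality $\mathcal P^-(D^2u,Du)\le 1$ is assumed to hold. This is exactly the cusp-sliding mechanism of Imbert and Silvestre; see Proposition 3.5 (and the surrounding discussion) in \cite{Imbert16}.

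In detail, I would argue by contradiction: fix $A_1>1$, suppose for every $\epsilon_0,K$ there is $u$ (first assume $u\in C^2(B_1)\cap C(\overline{B_1})$) satisfying the three hypotheses but with $u(x_0)\le 1$ for some $x_0\in B_{1/4}$, and set $G=\{u>K\}\cap B_{1/4}$, so $|G|\ge (c(n)-\epsilon_0)|B_1|>0$. For each vertex $x\in G$ slide the cusp $\phi_x(z)=-C_1|z-x|^{-\kappa}$ up from $-\infty$ until it first touches the graph of $u$ at some $y=y(x)$; as in Lemma \ref{A00} one checks, using $u\ge 0$, $u(x_0)\le 1$, $x\in B_{1/4}$ and a large choice of $K=K(n)$, that $y\in B_1$ and $u(y)<K$. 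At the touching point $Du(y)=D\phi_x(y)$ and $D^2u(y)\ge D^2\phi_x(y)$; since $|D\phi_x(y)|=C_1\kappa|y-x|^{-\kappa-1}$, either $y$ is close enough to $x$ that $|Du(y)|\ge A_1$ — the good case — or $y$ is bounded away from $x$, and in the latter case a direct estimate (exactly as in \cite{Imbert16}) shows the touching cannot happen inside $G$'s "shadow" unless $y$ is already in the good region; in either event we may assume $|Du(y)|\ge A_1$, so $\mathcal P^-(D^2u,Du)\le 1$ at $y$, hence $D^2u(y)\ge D^2\phi_x(y)$ together with the upper Pucci bound gives $\|D^2u(y)\|\le C(n,\lambda,\Lambda)|y-x|^{-\kappa-2}$ on the set of touching points. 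One then recovers the vertex $x$ from $y$ via the inverse of the map $y\mapsto y+(\text{gradient correction})$, lets $U\subset\{u<K\}\cap B_1$ be the set of touching points, and applies the area formula to $m\colon U\to G$, $m(U)=G$, with $|\det Dm|$ controlled by the Hessian bound, to obtain
\begin{align*}
(c(n)-\epsilon_0)|B_1|\le |G|=\int_U |\det Dm(y)|\,dy\le C\,|U|\le C\,\epsilon_0|B_1|,
\end{align*}
a contradiction once $\epsilon_0$ is small. The general (semiconcave, hence non-$C^2$) case follows by the standard sup-convolution/touching-from-below regularization used in the proof of Proposition 3.5 of \cite{Imbert16}.

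The main obstacle is the geometry of the cusp slide: unlike the paraboloid, whose gradient is globally bounded so that $|Du(y)|\le A_0$ automatically, the cusp has an unbounded gradient only near its vertex, so one must verify that the first touching point $y$ genuinely lands in $\{|Du|\ge A_1\}$ — i.e. that the cusp is steep enough, for the chosen $C_1,\kappa$, that it cannot touch the graph of $u$ "far" from $x$ while the gradient there is still $<A_1$. This is where the precise choice of the exponent $\kappa$ and the profile constant $C_1$ matters, and where one leans on the quantitative estimates already carried out in \cite{Imbert16}; the rest — the $y\in B_1$ claim, the Hessian bound, and the area-formula count — is a routine adaptation of Lemma \ref{A00}.
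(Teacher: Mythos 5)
Your overall strategy --- contradiction, slide a cusp with unbounded gradient at the vertex to force $|Du|\ge A_1$ at the touching point, Hessian bound, area formula --- is the same as the paper's (which uses $\phi(z)=-C|z-x|^{1/2}$, matching Proposition~3.5 of Imbert--Silvestre). But your cusp is upside down: you write $\phi(z)=-C_1|z-x|^{-\kappa}$ with $\kappa>0$, i.e.\ $\phi(z)=-C_1/|z-x|^\kappa$, which equals $-\infty$ at the vertex $z=x$ and increases to $0$ as $|z-x|\to\infty$. This is a well, not a cusp, and the sliding argument breaks at exactly the step you claim goes through ``as in Lemma~\ref{A00}.'' There, $y\in B_1$ and $u(y)<K$ are deduced from comparing the touching value with $u(x_0)-\phi(x_0)\le 1+10\left(\tfrac{1}{2}\right)^2$ against the value on $\partial B_1$, which is $\ge 10\left(\tfrac{3}{4}\right)^2$. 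For your $\phi$, $u(x_0)-\phi(x_0)=1+C_1|x_0-x|^{-\kappa}$ is not bounded above uniformly in $x\in G$ (it blows up as $x\to x_0$), while on $\partial B_1$ the quantity $u-\phi$ is only bounded below by the fixed constant $C_1(5/4)^{-\kappa}$. So you cannot rule out $y\in\partial B_1$, and you cannot get $u(y)<K$ with a universal $K$: the whole touching mechanism depends on $\phi$ having its peak (value $0$) at the vertex, so that it ``reaches up'' toward the graph of $u$ near points of $G$.

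The fix is to take $\phi(z)=-C|z-x|^{\kappa}$ with a positive exponent $\kappa\in(0,1)$ (the paper uses $\kappa=1/2$): then $\phi(x)=0$, $|D\phi(z)|=C\kappa|z-x|^{\kappa-1}$ is still unbounded at $x$ but, crucially, is bounded below on $B_1$ by $C\kappa(5/4)^{\kappa-1}$, so choosing $C=C(n,A_1)$ large yields $|Du(y)|\ge A_1$ at every touching point with no case distinction --- the ``near $x$ / far from $x$'' dichotomy you introduce is unnecessary and a symptom of the wrong sign. With $\kappa=1/2$ the correct Hessian bound at the touching point is $\|D^2u(y)\|\le C|y-x|^{\kappa-2}=C|y-x|^{-3/2}$ (not $|y-x|^{-\kappa-2}$), and this blow-up cancels against the factor $|Du(y)|^{-3}\sim|y-x|^{3/2}$ in the Jacobian of $m(y)=y+\tfrac{C^2}{4}|Du(y)|^{-3}Du(y)$, giving the bounded $\det Dm$ needed for the area formula. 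Finally note that, unlike in Lemma~\ref{A00}, both $C$ and $K$ necessarily depend on $A_1$ (through the requirement $\min_{B_1}|D\phi|\ge A_1$), not just on $n$; the lemma statement permits this. With these corrections the proof coincides with the paper's.
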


\begin{proof}
The proof is similar to that of Lemma \ref{A00}. First, assume that
$u\in C^2(B_1)\cap C(\overline{B_1})$ and assume the contrary that
for all $\epsilon_0$ and $K$, we can find $u$ such that the above
conditions hold but $u(x_0) \leq 1$ for some $x_0 \in B_{1/4}$.

Consider $G = \{u>K\} \cap B_{1/4}$. For some $C>1$, we slide a cusp $\phi(z) = -C|z-x|^{1/2}$ with vertex $x \in G$ from below, until it touches the graph of $u$ for the first time at a point $y \in \overline{B_1}$. Then we have
\begin{align*}
    u(y) + C|y-x|^{1/2} = \inf_{z\in\overline{B_1}}\{u(z) + C|z-x|^{1/2}\}.
\end{align*}
We claim that $y \in B_1$. If $C>1$ is large enough, then $y \not\in
\partial B_1$ by using a similar argument in Lemma \ref{A00}.

Moreover, we choose large $C=C(n,A_1)>1$ such that $A_1 \leq
\min_{B_1} |D\phi|$, and $K=K(n,A_1)>1$ such that $u(y)<K$. Then we have $x\neq y$ so that $-C|z-x|^{1/2}$ is differentiable at $z=y$, and $|Du| \geq A_1$ at a touching
point $y$, which implies that we can use the inequality at that point.
The rest of the proof is same to that of Lemma 3.1 in \cite{Imbert16}.
\end{proof}

We again use the barrier function in Lemma \ref{A01} to prove a
doubling type lemma similar to Lemma \ref{A01}.

\begin{lem} \label{A11}
    For any $A_1>1$, there exist small $\tau_0>0$, $\mu >0$, $\eta>0$, and large $K>1$ such that if $\sigma \geq 1$, $\sup|\tau| \leq \tau_0$ and $[\varphi_\Omega]_{C^1} < \mu$, for any $u \in C(\Omega_{\tilde{\rho}+1}(\sigma e_n))$ satisfying
    \begin{align*}
    \begin{cases}
        u \geq 0 \text{ in } \Omega_{\tilde{\rho}+1}(\sigma e_n),\\
        \mathcal{P}^-(D^2u,Du) \leq 1 \text{ in } \{|Du| \geq A_1\}\cap \Omega_{\tilde{\rho}+1}(\sigma e_n), \\
        \beta \cdot Du \leq \eta \text{ on } \partial \Omega_{\tilde{\rho}+1}(\sigma e_n), \\
        u>K \text{ on } B_{1/4}(\sigma e_n),
    \end{cases}
    \end{align*}
    then $u>1$ in $\Omega_{\tilde{\rho}}(\sigma e_n)$.
\end{lem}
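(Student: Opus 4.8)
The plan is to mirror the proof of Lemma \ref{A01} essentially line by line, using the very same radial barrier $B(x)=\frac{K}{2\cdot4^p}\bigl(|x-\sigma e_n|^{-p}-(\tilde\rho+1)^{-p}\bigr)$ on the annular region $\Omega_{\tilde\rho+1}(\sigma e_n)\setminus B_{1/4}(\sigma e_n)$, with $p=p(n,\lambda,\Lambda)$ fixed large exactly as in the barrier computation preceding Lemma \ref{A01}. For such $p$ one has, on $B_{\tilde\rho+1}(\sigma e_n)\setminus B_{1/4}(\sigma e_n)$, both $\mathcal{P}^-(D^2B,DB)\ge\frac{K}{2\cdot4^p}p(\tilde\rho+1)^{-p-2}$ and $|DB|=\frac{Kp}{2\cdot4^p}|x-\sigma e_n|^{-p-1}\ge\frac{Kp}{2\cdot4^p}(\tilde\rho+1)^{-p-1}$, and, for universally small $\tau_0,\mu$, $\beta\cdot DB>\frac{K}{2\cdot4^p}\eta_0>\eta$ on $\partial\Omega_{\tilde\rho+1}(\sigma e_n)$ whenever $\sup|\tau|<\tau_0$, $[\varphi_\Omega]_{C^1}<\mu$ and $\eta<\eta_0$. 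The one structural difference from Lemma \ref{A01} is that here the differential inequality is available on $\{|Du|\ge A_1\}$ instead of $\{|Du|\le A_0\}$; so, rather than setting $A_0:=\sup|DB|$ over the annulus, I would choose $K=K(n,\lambda,\Lambda,A_1)$ so large that $\frac{Kp}{2\cdot4^p}(\tilde\rho+1)^{-p-1}\ge A_1$, which forces $|DB|\ge A_1$ everywhere on the annulus, while at the same time $\frac{K}{2\cdot4^p}p(\tilde\rho+1)^{-p-2}\ge2$ and $B>1$ on $B_{\tilde\rho}(\sigma e_n)$; all three hold for a single large $K$ because $B$ scales linearly in $K$. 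Enlarging $K$ only increases $\beta\cdot DB$, so $\eta,\tau_0,\mu$ stay universal, as the statement requires.

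With $B$ and $K$ so chosen I would record the usual properties, now reading: $(1)$ $B\le0$ outside $B_{\tilde\rho+1}(\sigma e_n)$; $(2)$ $B<K$ on $\partial B_{1/4}(\sigma e_n)$; $(3)$ $\beta\cdot DB>\eta_0>\eta$ on $\partial\Omega_{\tilde\rho+1}(\sigma e_n)$; $(4)$ $\mathcal{P}^-(D^2B,DB)\ge2$ and $|DB|\ge A_1$ on $B_{\tilde\rho+1}(\sigma e_n)\setminus B_{1/4}(\sigma e_n)$; $(5)$ $B>1$ on $B_{\tilde\rho}(\sigma e_n)$. Then run the comparison: if $u-B$ had a negative minimum at $x_0\in\overline{\Omega_{\tilde\rho+1}(\sigma e_n)\setminus B_{1/4}(\sigma e_n)}$, then by $(1)$ together with $u\ge0$ it is not on $\Omega\cap\partial B_{\tilde\rho+1}(\sigma e_n)$, and by $(2)$ together with $u>K$ on $B_{1/4}(\sigma e_n)$ it is not on $\partial B_{1/4}(\sigma e_n)$. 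If $x_0\in\partial\Omega\cap B_{\tilde\rho+1}(\sigma e_n)$, then $B+(u-B)(x_0)$ is an admissible lower test function for $u$ at $x_0$, so the oblique condition gives $\beta\cdot DB(x_0)\le\eta<\eta_0$, contradicting $(3)$. Otherwise $x_0$ is interior, hence lies in the annulus, so $|DB(x_0)|\ge A_1$ by $(4)$ and the inequality $\mathcal{P}^-(D^2u,Du)\le1$ is available at $x_0$; testing once more against $B+(u-B)(x_0)$ yields $\mathcal{P}^-(D^2B(x_0),DB(x_0))\le1$, contradicting $(4)$. Hence $u\ge B$ on the annulus, and combining with $u>K>1$ on $B_{1/4}(\sigma e_n)$ and $(5)$ we conclude $u>1$ on $\Omega_{\tilde\rho}(\sigma e_n)$.

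I do not expect a genuine obstacle: Lemma \ref{A11} is the ``large gradient'' twin of Lemma \ref{A01} and the proof is the same comparison against a radial barrier. The only point that needs care is the joint choice of $K$ — the barrier's gradient must exceed the threshold $A_1$ throughout the comparison region so that the equation is actually usable at an interior touching point, while $\mathcal{P}^-(D^2B,DB)$ must simultaneously stay strictly above $1$ there; both are secured by one $K=K(n,\lambda,\Lambda,A_1)$ since scaling $B$ by $K$ multiplies $|DB|$ and $\mathcal{P}^-(D^2B,DB)$ by the same factor. A minor bookkeeping point, noted above, is that this enlargement of $K$ leaves the boundary inequality $(3)$ and the smallness constants $\eta,\tau_0,\mu$ untouched, so that only $K$ depends on $A_1$.
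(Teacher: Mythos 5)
Your proof is correct and follows essentially the same approach as the paper: reuse the radial barrier $B(x)=\frac{K}{2\cdot4^p}\bigl(|x-\sigma e_n|^{-p}-(\tilde\rho+1)^{-p}\bigr)$ from Lemma \ref{A01}, and enlarge $K=K(n,\lambda,\Lambda,A_1)$ so that $\inf_{B_{\tilde\rho+1}(\sigma e_n)\setminus B_{1/4}(\sigma e_n)}|DB|>A_1$ while simultaneously keeping $\mathcal{P}^-(D^2B,DB)\ge 2$ and $B>1$ on $B_{\tilde\rho}(\sigma e_n)$, then run the same comparison. Your observation that enlarging $K$ only helps the boundary inequality, so $\eta,\tau_0,\mu$ remain independent of $A_1$, is also exactly what the paper relies on.
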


\begin{proof}
We consider the same barrier function $B$ in the proof of Lemma \ref{A01}.
\begin{align*}
     B(x) = \frac{K}{2 \cdot 4^p}(|x-\sigma e_n|^{-p}-(\tilde{\rho}+1)^{-p})
\end{align*}
For $\tau_0, \mu < 1/(2\tilde{\rho}+2)$, we choose large $K=K(n,\lambda,\Lambda,A_1)\geq 2 \cdot 4^p$ satisfying $ \inf_{B_{\tilde{\rho}+1}(\sigma e_n) \setminus B_{1/4}(\sigma e_n)}|DB| > A_1$,
\begin{enumerate}
    \item[(4')] $\mathcal{P}^-(D^2B, DB) \geq 2$ in $\{|DB| \geq A_1\}\cap B_{\tilde{\rho}+1}(\sigma e_n) \setminus B_{1/4}(\sigma e_n),$
\end{enumerate}
and (5) in Lemma \ref{A01}.
By using the same argument, we can prove that $u>1$ in $\Omega_{\tilde{\rho}}(\sigma e_n)$.
\end{proof}

\begin{cor} \label{A1}
    For any $A_1>1$, there exist small $\tau_0>0$, $\mu>0$, $\eta>0$, $\epsilon_0>0$, and large $K>1$, such that if $\sigma \geq 1$, $B_1(\sigma e_n) \subset \Omega$ and $\sup|\tau| \leq \tau_0$, for any $u \in C( \overline{\Omega_{\tilde{\rho}+1}(\sigma e_n)})$ satisfying
    \begin{align*}
    \begin{cases}
        u \geq 0 \text{ in } \Omega_{\tilde{\rho}+1}(\sigma e_n),\\
        \mathcal{P}^-(D^2u,Du) \leq 1 \text{ in } \{|Du| \geq A_1\}\cap \Omega_{\tilde{\rho}+1}(\sigma e_n), \\
        \beta \cdot Du \leq \eta \text{ on } \partial \Omega_{\tilde{\rho}+1}(\sigma e_n), \\
        |\{u>K\} \cap B_1(\sigma e_n)| \geq (1-\epsilon_0)|B_1|,
    \end{cases}
    \end{align*}
    we have $u>1$ in $\Omega_{\tilde{\rho}}(\sigma e_n)$.
\end{cor}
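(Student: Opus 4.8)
The plan is to mirror the proof of Corollary \ref{A0}, combining the interior measure estimate of Lemma \ref{A10} (proved by sliding cusps, since the equation is now uniformly elliptic where the gradient is \emph{large}) with the doubling-type Lemma \ref{A11}, the two being linked by a translation centering at $\sigma e_n$ together with a dilation of the dependent variable. The structural feature that makes this work, recorded in the remark preceding Proposition \ref{two}, is that dividing $u$ by a constant $K>1$ \emph{shrinks} the set $\{|Du|\ge A_1\}$ on which the one-sided inequality is available, and shrinking is harmless; this is in contrast with the ``gradient small'' situation, where the analogous set is not preserved and an auxiliary threshold $A_0$ has to be carried along --- which is precisely why, unlike in Corollary \ref{A0}, no such $A_0$ appears in the present statement.

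First I would fix the constants: for the given $A_1>1$, let $\epsilon_0>0$ and $K'>1$ be the constants furnished by Lemma \ref{A10}, and let $\tau_0,\mu,\eta>0$ and $K''>1$ be those furnished by Lemma \ref{A11} (all depending only on $n,\lambda,\Lambda,\delta_0$ and $A_1$); I then set $K:=K'K''$ and take $\tau_0,\mu,\eta,\epsilon_0$ and $K$ as the constants claimed by the corollary. Now, given $u$ satisfying the hypotheses with $\sigma\ge1$, $B_1(\sigma e_n)\subset\Omega$, $\sup|\tau|\le\tau_0$ and $[\varphi_\Omega]_{C^1}<\mu$, I introduce $v(x):=u(x+\sigma e_n)/K''$ on $\overline{B_1}$ and check the hypotheses of Lemma \ref{A10}: one has $v\ge0$; since $\mathcal{P}^-$ is positively $1$-homogeneous in $(D^2u,Du)$ and
\[
\{|Dv|\ge A_1\}\cap B_1=\{x\in B_1:|Du(x+\sigma e_n)|\ge A_1K''\}\subset\{x\in B_1:|Du(x+\sigma e_n)|\ge A_1\},
\]
the bound $K''>1$ gives $\mathcal{P}^-(D^2v,Dv)\le1/K''\le1$ on $\{|Dv|\ge A_1\}\cap B_1$; and the density hypothesis $|\{u>K\}\cap B_1(\sigma e_n)|\ge(1-\epsilon_0)|B_1|$ is exactly $|\{v>K'\}\cap B_1|\ge(1-\epsilon_0)|B_1|$ because $K=K'K''$. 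Hence Lemma \ref{A10} applies and yields $v>1$ in $B_{1/4}$, that is, $u>K''$ in $B_{1/4}(\sigma e_n)$.

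Finally I would feed $u$ into Lemma \ref{A11}: its hypotheses --- $u\ge0$ in $\Omega_{\tilde\rho+1}(\sigma e_n)$, $\mathcal{P}^-(D^2u,Du)\le1$ on $\{|Du|\ge A_1\}\cap\Omega_{\tilde\rho+1}(\sigma e_n)$, $\beta\cdot Du\le\eta$ on $\partial\Omega_{\tilde\rho+1}(\sigma e_n)$, and $u>K''$ on $B_{1/4}(\sigma e_n)$ --- all hold, the last by the previous step and the rest by assumption (with the same $\tau_0,\mu,\eta$), so Lemma \ref{A11} gives $u>1$ in $\Omega_{\tilde\rho}(\sigma e_n)$, as required. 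The only delicate point is the bookkeeping of the dilation, namely verifying that the ``good'' set is preserved under $u\mapsto v$ and that the density threshold multiplies correctly; because $A_1$ is prescribed and $K''>1$ this is routine, and the proof is then a direct concatenation of Lemmas \ref{A10} and \ref{A11}.
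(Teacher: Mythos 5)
Your proof is correct and follows the same route as the paper: the paper's own proof is the one-liner ``Let $K_1$, $K_2$ be the constants from Lemmas \ref{A10} and \ref{A11} respectively. We choose $K=K_1 K_2$, then by the same argument above,'' referring back to the proof of Corollary \ref{A0}, and your argument is exactly that, carried out in full. Your additional remark explaining why no analogue of the threshold $A_0$ (present in Corollary \ref{A0}, where $A_0=\max\{A_{01}K_2,A_{02}\}$ must be enlarged to account for the rescaling) is needed here is accurate and worth making: since $Dv=Du/K''$ with $K''>1$, the set $\{|Dv|\geq A_1\}$ is contained in $\{|Du|\geq A_1\}$, so the hypothesis on $u$ is automatically inherited by $v$ with the same $A_1$. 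You also correctly supply the hypothesis $[\varphi_\Omega]_{C^1}<\mu$, which the statement of Corollary \ref{A1} introduces ($\mu>0$) but, apparently by an oversight, does not impose; it is genuinely required for Lemma \ref{A11} to apply.
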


\begin{proof}
    Let $K_1$, $K_2$ be the constants from Lemmas \ref{A10} and \ref{A11} respectively.
    We choose $K=K_1 K_2$, then by the same argument above, we can prove the corollary.
\end{proof}
Now, we summarize what we have done so far.

\begin{cor} \label{meas}
    There exist small $\tau_0>0$, $\mu>0$, $\eta>0$, $\epsilon_0>0$, and large $K>1$, $A_0>1$ such that if $\sigma \geq 1$, $B_1(\sigma e_n) \subset \Omega$, $\sup|\tau| \leq \tau_0$ and $[\varphi_\Omega]_{C^1} < \mu$, for any $u \in C( \overline{\Omega_{\tilde{\rho}+1}(\sigma e_n)})$ satisfying either
    \begin{align*}
    \begin{cases}
            u \geq 0 \text{ in } \Omega_{\tilde{\rho}+1}(\sigma e_n),\\
            \mathcal{P}^-(D^2u,Du) \leq 1 \text{ in } \{|Du| \leq A_0\}\cap \Omega_{\tilde{\rho}+1}(\sigma e_n),\\
            \beta \cdot Du \leq \eta \text{ on } \partial \Omega_{\tilde{\rho}+1}(\sigma e_n), \\
            |\{u>K\} \cap B_1(\sigma e_n)| \geq (1-\epsilon_0)|B_1|,
    \end{cases}
    \text{or}
    \begin{cases}
            u \geq 0 \text{ in } \Omega_{\tilde{\rho}+1}(\sigma e_n),\\
            \mathcal{P}^-(D^2u,Du) \leq 1 \text{ in } \{|Du| \geq A_0+2\}\cap \Omega_{\tilde{\rho}+1}(\sigma e_n),\\
            \beta \cdot Du \leq \eta \text{ on } \partial \Omega_{\tilde{\rho}+1}(\sigma e_n), \\
            |\{u>K\} \cap B_1(\sigma e_n)| \geq (1-\epsilon_0)|B_1|,
    \end{cases}
    \end{align*}
    we have $u>1$ in $\Omega_{\tilde{\rho}}(\sigma e_n)$.
\end{cor}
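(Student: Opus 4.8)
The plan is to obtain Corollary \ref{meas} by merging Corollary \ref{A0} and Corollary \ref{A1}, retaining for each constant the more restrictive of the two choices. First I would invoke Corollary \ref{A0} to fix constants $\tau_0^{(1)},\mu^{(1)},\eta^{(1)},\epsilon_0^{(1)}>0$ and $K^{(1)},A_0>1$; note that this pins down the particular value of $A_0$. Then I would invoke Corollary \ref{A1} with the \emph{specific} choice $A_1:=A_0+2>1$, obtaining constants $\tau_0^{(2)},\mu^{(2)},\eta^{(2)},\epsilon_0^{(2)}>0$ and $K^{(2)}>1$. Finally I set
\begin{align*}
\tau_0 &:= \min\{\tau_0^{(1)},\tau_0^{(2)}\}, \quad \mu := \min\{\mu^{(1)},\mu^{(2)}\}, \quad \eta := \min\{\eta^{(1)},\eta^{(2)}\},\\
\epsilon_0 &:= \min\{\epsilon_0^{(1)},\epsilon_0^{(2)}\}, \quad K := \max\{K^{(1)},K^{(2)}\},
\end{align*}
and keep the same $A_0$ as above.

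To verify the claim, suppose $u\in C(\overline{\Omega_{\tilde{\rho}+1}(\sigma e_n)})$ satisfies one of the two displayed systems with these constants, under $\sigma\geq1$, $B_1(\sigma e_n)\subset\Omega$, $\sup|\tau|\leq\tau_0$ and $[\varphi_\Omega]_{C^1}<\mu$. The elementary point is that every hypothesis is monotone in the constant entering it: since $K\geq K^{(1)}$ we have $\{u>K\}\subset\{u>K^{(1)}\}$, so $|\{u>K^{(1)}\}\cap B_1(\sigma e_n)|\geq|\{u>K\}\cap B_1(\sigma e_n)|\geq(1-\epsilon_0)|B_1|\geq(1-\epsilon_0^{(1)})|B_1|$; moreover $\sup|\tau|\leq\tau_0^{(1)}$, $[\varphi_\Omega]_{C^1}<\mu^{(1)}$ and $\beta\cdot Du\leq\eta\leq\eta^{(1)}$ on $\partial\Omega_{\tilde{\rho}+1}(\sigma e_n)$. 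Hence if $u$ satisfies the first system, which is uniformly elliptic on $\{|Du|\leq A_0\}$, it satisfies all the hypotheses of Corollary \ref{A0} and therefore $u>1$ in $\Omega_{\tilde{\rho}}(\sigma e_n)$. If instead $u$ satisfies the second system, which is uniformly elliptic on $\{|Du|\geq A_0+2=A_1\}$, the same monotone comparisons — now against the superscript-$(2)$ constants — show that $u$ satisfies all the hypotheses of Corollary \ref{A1} for this value of $A_1$, and again $u>1$ in $\Omega_{\tilde{\rho}}(\sigma e_n)$.

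There is no genuine obstacle here: the whole content has already been supplied by Corollaries \ref{A0} and \ref{A1} (and ultimately by Lemmas \ref{A00}, \ref{A01}, \ref{A10}, \ref{A11}). The one point requiring attention is that Corollary \ref{A1} must be applied with $A_1$ equal to the \emph{particular} number $A_0+2$ produced by Corollary \ref{A0}, rather than with $A_1$ left free; this is precisely what makes the two elliptic regions $\{|Du|\leq A_0\}$ and $\{|Du|\geq A_0+2\}$ match the dichotomy of Proposition \ref{two}, so that the rescaled function $v(x)=u(rx)/M$ is guaranteed to satisfy one of these two systems irrespective of where $q$ lies.
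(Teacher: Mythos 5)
Your proposal is correct and follows the same route as the paper: the paper's one-line proof also fixes $A_0$ from Corollary~\ref{A0}, applies Corollary~\ref{A1} with $A_1=A_0+2$, and combines. You have merely spelled out the routine bookkeeping (taking minima/maxima of constants and checking monotonicity of the hypotheses) that the paper leaves implicit.
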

\begin{proof}
    We choose $A_1 = A_0+2$ in the Corollary \ref{A1} and use Corollaries \ref{A0} and \ref{A1}.
\end{proof}

Using the above Lemma, we prove the $L^\epsilon$ estimate of the
``almost Neumann" condition on a ``almost $C^1$-flat" domain.

\begin{thm} \label{Le}
    There exist small $\tau_0>0$, $\mu>0$, $\eta>0$, $\epsilon>0$, and large $C_1>1$ such that if $\sup|\tau| \leq \tau_0$ and $[\varphi_\Omega]_{C^1} < \mu$, for any $q \in \mathbb{R}^n$, $0<\theta\leq 1$ and $u \in C(\overline{\Omega_{\tilde{\rho}+1}})$ satisfying
    \begin{align*}
        \begin{cases}
            u \geq 0 \text{ in } \Omega_{\tilde{\rho}+1},\\
            \mathcal{P}^-(D^2u,Du) \leq 1 \text{ in } \{|Du-q| \geq \theta\}\cap \Omega_{\tilde{\rho}+1},\\
            \beta \cdot Du \leq \eta \text{ on } \partial \Omega_{\tilde{\rho}+1}, \\
            \inf_{\Omega_{1}} u \leq 1,
        \end{cases}
    \end{align*}
    we have
    \begin{align*}
        |\{u>t\} \cap \Omega_1| \leq C_1t^{-\epsilon}, \ t>0.
    \end{align*}
\end{thm}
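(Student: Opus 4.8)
The plan is to derive the geometric decay $|\{u>K^{k}\}\cap\Omega_{1}|\le(1-c)^{k-1}|\Omega_{1}|$ from a single-scale measure–shrinking step, iterated through the growing ink-spot Lemma \ref{ink}, and then interpolate in $t$. The single-scale step will be Corollary \ref{meas}, and the reason the whole argument stays uniform in $q$ and $\theta$ is Proposition \ref{two}: after any spatial rescaling the set where the equation fails to be uniformly elliptic is contained either in $\{|Dv|\le A_{0}\}$ or in $\{|Dv|\ge A_{0}+2\}$, and Corollary \ref{meas} covers both of these alternatives with the same constants. Throughout, fix $K,A_{0},\epsilon_{0},\tau_{0},\mu,\eta$ to be the constants of Corollary \ref{meas}, shrinking $\mu$ further if necessary so that $\mu\le1/6$; then by Proposition \ref{cork} the domain $\Omega_{1}$ satisfies the $\rho$-corkscrew condition with $\rho=1/8$, hence $\tilde\rho=32$. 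All resulting constants depend only on $n,\lambda,\Lambda,\delta_{0}$.

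\textbf{The iteration.} First I would set, for $k\ge0$,
\begin{align*}
G_{k}=\{u>K^{k}\}\cap\Omega_{1},
\end{align*}
which is open and decreasing in $k$. Since $\inf_{\Omega_{1}}u\le1$ and $K>1$, for every $k\ge2$ there is a point of $\Omega_{1}$ at which $u<K^{k-1}$, so $G_{k-1}\subsetneq\Omega_{1}$. I claim that the pair $E=G_{k}$, $F=G_{k-1}$ satisfies hypothesis (2) of Lemma \ref{ink} with $\epsilon=\epsilon_{0}$, i.e.
\begin{align*}
B_{s}(x_{0})\subset\Omega_{1}\ \text{ and }\ |B_{s}(x_{0})\cap G_{k}|>(1-\epsilon_{0})|B_{s}(x_{0})|\ \Longrightarrow\ B_{32s}(x_{0})\cap\Omega_{1}\subset G_{k-1}.
\end{align*}
Granting this, Lemma \ref{ink} yields $|G_{k}|\le(1-24^{-n}\epsilon_{0})|G_{k-1}|$ for all $k\ge2$, hence $|G_{k}|\le(1-c)^{k-1}|\Omega_{1}|$ with $c=24^{-n}\epsilon_{0}\in(0,1)$. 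Choosing $\epsilon$ so that $(1-c)=K^{-\epsilon}$ and splitting according to whether $t<K$ or $K^{k}\le t<K^{k+1}$ for some $k\ge1$, one gets $|\{u>t\}\cap\Omega_{1}|\le C_{1}t^{-\epsilon}$, after enlarging $C_{1}$ to absorb the trivial range $t<K$ (where $|\{u>t\}\cap\Omega_{1}|\le|B_{1}|$ already suffices).

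\textbf{The single-scale step.} Suppose $B_{s}(x_{0})\subset\Omega_{1}$ but there is $z\in B_{32s}(x_{0})\cap\Omega_{1}$ with $u(z)\le K^{k-1}$; I will show $|B_{s}(x_{0})\cap G_{k}|\le(1-\epsilon_{0})|B_{s}(x_{0})|$, which is the contrapositive of the displayed implication. Since $B_{s}(x_{0})\subset\Omega_{1}=\{x_{n}>\varphi(x')\}\cap B_{1}$, the vertical gap $\sigma_{0}:=(x_{0})_{n}-\varphi(x_{0}')$ is at least $s$, and $B_{33s}(x_{0})\subset B_{33}$. Translating so that $(x_{0}',\varphi(x_{0}'))\mapsto0$ and dilating by $s^{-1}$, the point $x_{0}$ becomes $\sigma e_{n}$ with $\sigma=\sigma_{0}/s\ge1$, the ball $B_{s}(x_{0})$ becomes $B_{1}(\sigma e_{n})$, the equation (available on $\Omega_{\tilde\rho+1}=\Omega_{33}$) becomes available on $\Omega\cap B_{33}(\sigma e_{n})$, the rescaled domain has its boundary a graph through the origin with unchanged $C^{1}$-norm $<\mu$, and $\beta$ (hence $\sup|\tau|$ and $\beta_{n}$) is unchanged. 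Put
\begin{align*}
v(y)=K^{-(k-1)}\,u\big((x_{0}',\varphi(x_{0}'))+sy\big).
\end{align*}
Then $v\ge0$; one computes $\beta\cdot Dv=(s/K^{k-1})\,(\beta\cdot Du)\le\eta$ on $\partial\Omega_{\tilde\rho+1}(\sigma e_{n})$; at the rescaled point $z'\in\Omega_{\tilde\rho}(\sigma e_{n})$ corresponding to $z$ one has $v(z')=K^{-(k-1)}u(z)\le1$; and since $\mathcal{P}^{-}(D^{2}u,Du)\le1$ on $\{|Du-q|\ge\theta\}$, the chain rule together with $s\le1$, $K^{k-1}\ge1$ and the elementary set-inclusion dichotomy underlying Proposition \ref{two} shows that $v$ satisfies $\mathcal{P}^{-}(D^{2}v,Dv)\le1$ either on $\{|Dv|\le A_{0}\}\cap\Omega_{\tilde\rho+1}(\sigma e_{n})$ or on $\{|Dv|\ge A_{0}+2\}\cap\Omega_{\tilde\rho+1}(\sigma e_{n})$. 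Thus $v$ satisfies every hypothesis of Corollary \ref{meas} except possibly the last one, while $v\not>1$ on $\Omega_{\tilde\rho}(\sigma e_{n})$ because $v(z')\le1$; the contrapositive of Corollary \ref{meas} therefore forces $|\{v>K\}\cap B_{1}(\sigma e_{n})|<(1-\epsilon_{0})|B_{1}|$, which upon rescaling back is exactly $|B_{s}(x_{0})\cap G_{k}|<(1-\epsilon_{0})|B_{s}(x_{0})|$ (recall $B_{s}(x_{0})\subset\Omega_{1}$).

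\textbf{Main obstacle.} I expect the genuine work to be entirely in the single-scale step, and more precisely in the bookkeeping of the translation–dilation normalization: checking that $\sigma\ge1$, that the rescaled domain still obeys the ``almost $C^{1}$-flat'' and ``almost Neumann'' conditions on the full ball $B_{33}(\sigma e_{n})$ (so that the constants $\tau_{0},\mu,\eta,\epsilon_{0},K,A_{0}$ from Corollary \ref{meas} and its underlying lemmas remain admissible), that $\mathcal{P}^{\pm}$ and the gradient set $\{|Du-q|\ge\theta\}$ transform as claimed, and that all of these smallness/largeness constants can be fixed simultaneously and independently of $q$ and $\theta$. Once Corollary \ref{meas} and Proposition \ref{two} are granted this is routine but must be carried out with care; the ink-spot iteration and the interpolation in $t$ are then immediate.
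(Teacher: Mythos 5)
Your proposal is correct and follows essentially the same approach as the paper: the iteration is driven by the growing ink-spot Lemma \ref{ink}, the single-scale measure step is Corollary \ref{meas}, and uniformity in $q$ and $\theta$ is obtained through Proposition \ref{two}. The only cosmetic differences are that you phrase the single-scale step as the contrapositive of Corollary \ref{meas} (the paper applies it directly to conclude $\tilde\rho B\cap\Omega_1\subset F$) and that you start the ink-spot iteration at $k=2$ so that $F=G_{k-1}\neq\Omega_1$ is immediate from $\inf u\le1<K$ — a tidy way of sidestepping the $m=0$ base case.
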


\begin{proof}
We claim that for $\epsilon_0>0$, $K>1$ as in Corollary \ref{meas},
\begin{align*}
    |\{ u > K^m\} \cap \Omega_1| \leq (1-3^{-n}\rho^n\epsilon_0 )^m |\Omega_1|.
\end{align*}
This can be proved by induction on $m$. For $m=0$, it is trivial.

Assuming that the claim is true for some $m$, we set
\begin{align*}
    E = \{u > K^{m+1}\} \cap \Omega_1, \ F= \{u > K^{m}\} \cap \Omega_1,
\end{align*}
and we prove that $|E| \leq (1-3^{-n}\rho^n\epsilon_0 ) |F|$ using Lemma \ref{ink}.
Note that $E \subset F \subset \Omega_1$ and $F \neq \Omega_1$ since $\inf_{\Omega_{1}} u \leq 1$.
Let $B = B_r(x_0) \subset \Omega_1$ be a ball satisfying $|B \cap E| >(1-\epsilon_0)|B|$, then $r\leq 1$.
We write $\bar{x_0} = (x_0', \varphi(x_0')) \in \partial \Omega_1$ a  projection of $x_0$ to a boundary, then $\sigma := \frac{(x_0 - \bar{x_0})_n}{r} \geq 1$ since $B_r(x_0) \subset \Omega_1$.

Now, we consider the scaled function $v(y) = \frac{1}{K^m}u(\bar{x_0}+ ry)$.
Then by Proposition \ref{two}, $v$ satisfies either following inequalities
\begin{align*}
     \begin{cases}
        v \geq 0 \text{ in } \tilde{\Omega}_{\tilde{\rho}+1}(\sigma e_n),\\
        \mathcal{P}^-(D^2v,Dv) \leq 1 \text{ in } \{|Dv| \leq A_0\}\cap \tilde{\Omega}_{\tilde{\rho}+1}(\sigma e_n),\\
        \tilde{\beta} \cdot Dv  \leq \eta \text{ on } \partial \tilde{\Omega}_{\tilde{\rho}+1}(\sigma e_n),\\
        |\{v>K\} \cap B_1(\sigma e_n)|> (1-\epsilon_0)|B_1|,
    \end{cases}
    \text{or }
    \begin{cases}
        v \geq 0 \text{ in } \tilde{\Omega}_{\tilde{\rho}+1}(\sigma e_n),\\
        \mathcal{P}^-(D^2v,Dv) \leq 1 \text{ in } \{|Dv| \geq A_0+2\}\cap \tilde{\Omega}_{\tilde{\rho}+1}(\sigma e_n),\\
        \tilde{\beta} \cdot Dv \leq \eta \text{ on } \partial \tilde{\Omega}_{\tilde{\rho}+1}(\sigma e_n),\\
        |\{v>K\} \cap B_1(\sigma e_n)|> (1-\epsilon_0)|B_1|,
    \end{cases}
\end{align*}
where $\tilde{\Omega} = \frac{1}{r}(\Omega - \bar{x_0})$ and $\tilde{\beta}(x) = \beta(rx+\bar{x_0})$, which implies $\sup{|\tilde{\tau}|} \leq \sup|\tau| \leq \tau_0$.
Since $\varphi_{\tilde{\Omega}}(x) =\frac{\varphi_\Omega(rx+x'_0)-\varphi_\Omega(x'_0)}{r}$, we have $\varphi_{\tilde{\Omega}} (0) =0$ and $[\varphi_{\tilde{\Omega}}]_{C^1} \leq [\varphi_\Omega]_{C^1} \leq \mu$. Note that $B_1(\sigma e_n) \subset \tilde{\Omega}$.
Therefore using Corollary \ref{meas}, we conclude that $v>1$ in $\tilde{\Omega}_{\tilde{\rho}}(\sigma e_n)$ and so $u>K^m$ in $B_{\tilde{\rho}r}(x_0) \cap \Omega_1$.
In conclusion, we have $\tilde{\rho}B \cap \Omega_1 \subset F$.
Using that $\Omega_1$ satisfies $\rho$-corkscrew condition, we have $|E| \leq (1-3^{-n}\rho^n\epsilon_0 )|F|$ by Lemma \ref{ink} and it proves the claim.
Therefore, we get
\begin{align*}
    |\{ u > K^m\} \cap \Omega_1| \leq CK^{-m\epsilon}.
\end{align*}
where $-\epsilon = \log(1-3^{-n}\rho^n\epsilon_0 )/\log K$, which finishes the proof.
\end{proof}

\begin{lem}
    There exist small $\tau_0>0$, $\mu>0$, $\eta_1>0$, $\epsilon_1>0$, such that if $\sup|\tau| \leq \tau_0$ and $[\varphi_\Omega]_{C^1} < \mu$, then for any $q \in \mathbb{R}^n$, $r \leq 1$, $a \leq 1$, $0<\theta\leq\epsilon_1$ and $u \in C(\overline{\Omega_{\tilde{\rho}+1}})$ satisfying
    \begin{align*}
        \begin{cases}
            u \geq 0 \text{ in } \Omega_{(\tilde{\rho}+1)r}, \\
            \mathcal{P}^-(D^2u,Du) \leq \epsilon_1 \text{ in } \{|Du-q| \geq \theta\}\cap \Omega_{(\tilde{\rho}+1)r},\\
            \beta \cdot Du \leq \eta_1 \text{ on } \partial \Omega_{(\tilde{\rho}+1)r}, \\
            |\{u>r^a\} \cap \Omega_r| \geq \frac{1}{2}|\Omega_r|,
        \end{cases}
    \end{align*}
    then $u > \epsilon_1 r^a$ in $\Omega_r$.
\end{lem}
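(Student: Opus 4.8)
The plan is to reduce the statement, via a contradiction argument and rescaling, to the $L^{\epsilon}$ estimate already proved in Theorem \ref{Le}. First I would fix the universal constants $\eta$, $\epsilon$, $C_1$ and the smallness thresholds $\tau_0$, $\mu$ produced by Theorem \ref{Le}; shrinking $\mu$ further if necessary, the almost-flatness of $\partial\Omega_1$ (as in Proposition \ref{cork}) yields a lower volume bound $|\Omega_s| \geq c_0 s^n$ for all $0 < s \leq 1$, with $c_0 = c_0(n) > 0$. I would then pick $\epsilon_1 \in (0,1]$ so small that $C_1 \epsilon_1^{\epsilon} < \tfrac12 c_0$ and set $\eta_1 := \epsilon_1 \eta$; this is legitimate because $c_0$, $C_1$, $\epsilon$ do not depend on $\epsilon_1$.

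Next, assuming for contradiction that $\inf_{\Omega_r} u \leq \epsilon_1 r^a$, I would introduce the rescaled function $v(y) := u(ry)/(\epsilon_1 r^a)$ on $\overline{\tilde\Omega_{\tilde\rho+1}} = \tfrac1r\,\overline{\Omega_{(\tilde\rho+1)r}}$, where $\tilde\Omega := \tfrac1r\Omega$ (so $v \in C(\overline{\tilde\Omega_{\tilde\rho+1}})$ since $(\tilde\rho+1)r \leq \tilde\rho+1$), and check that $v$ satisfies every hypothesis of Theorem \ref{Le}. The relevant facts are: since $\mathcal{P}^-$ is positively $1$-homogeneous and $0 < r \leq 1$, the dilation $w(y) = u(ry)$ satisfies $\mathcal{P}^-(D^2 w, Dw) \leq r^2\,\mathcal{P}^-(D^2 u, Du)(r\cdot) \leq r^2 \epsilon_1$ on $\{|Dw - rq| \geq r\theta\}$, so dividing by $\epsilon_1 r^a$ and using $a \leq 1$ gives $\mathcal{P}^-(D^2 v, Dv) \leq r^{2-a} \leq 1$ on $\{|Dv - \tilde q| \geq \tilde\theta\} \cap \tilde\Omega_{\tilde\rho+1}$, with $\tilde q := \tfrac{r^{1-a}}{\epsilon_1}q$ and $\tilde\theta := \tfrac{r^{1-a}}{\epsilon_1}\theta$ — and $0 < \tilde\theta \leq 1$ precisely because $\theta \leq \epsilon_1$ and $r^{1-a} \leq 1$; the one-sided oblique condition scales to $\tilde\beta \cdot Dv \leq \tfrac{r^{1-a}}{\epsilon_1}\eta_1 \leq \tfrac{\eta_1}{\epsilon_1} = \eta$; the structural parameters are unchanged, $\sup|\tilde\tau| = \sup|\tau| \leq \tau_0$ and $[\varphi_{\tilde\Omega}]_{C^1} = [\varphi_\Omega]_{C^1} < \mu$; we have $v \geq 0$; and the contradiction hypothesis is exactly $\inf_{\tilde\Omega_1} v = \tfrac{1}{\epsilon_1 r^a}\inf_{\Omega_r} u \leq 1$.

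Then Theorem \ref{Le} gives $|\{v > t\} \cap \tilde\Omega_1| \leq C_1 t^{-\epsilon}$ for all $t > 0$. Undoing the scaling through $\{v > t\} \cap \tilde\Omega_1 = \tfrac1r\big(\{u > \epsilon_1 r^a t\} \cap \Omega_r\big)$ gives $|\{u > \epsilon_1 r^a t\} \cap \Omega_r| \leq C_1 r^n t^{-\epsilon}$, and the choice $t = 1/\epsilon_1$ yields $|\{u > r^a\} \cap \Omega_r| \leq C_1 \epsilon_1^{\epsilon} r^n$. On the other hand, the last hypothesis of the lemma together with $|\Omega_r| \geq c_0 r^n$ forces $|\{u > r^a\} \cap \Omega_r| \geq \tfrac12 c_0 r^n$. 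Comparing, $\tfrac12 c_0 \leq C_1 \epsilon_1^{\epsilon}$, which contradicts the choice of $\epsilon_1$; hence $u > \epsilon_1 r^a$ in $\Omega_r$.

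I expect the only genuinely delicate point to be the bookkeeping of the scaling exponents: one must verify that each factor appearing after rescaling — $r^{2-a}$ in the equation, $r^{1-a}\theta/\epsilon_1$ in the gradient threshold, and $r^{1-a}\eta_1/\epsilon_1$ in the boundary datum — stays below the universal thresholds required by Theorem \ref{Le}, which is exactly where the hypotheses $a \leq 1$, $r \leq 1$, $0 < \theta \leq \epsilon_1$ and the calibration $\eta_1 = \epsilon_1 \eta$ enter. Everything else is the standard behavior of viscosity subsolutions and of $\mathcal{P}^-$ under translation, dilation and positive scalar multiplication, already used repeatedly above, and the final smallness of $\epsilon_1$ is unconditional since it is only pitted against constants independent of $\epsilon_1$.
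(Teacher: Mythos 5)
Your proposal is correct and follows essentially the same route as the paper's proof: rescale by $u(ry)/(\epsilon_1 r^a)$, check that the scaled function satisfies the hypotheses of Theorem \ref{Le} using $a\le 1$, $r\le 1$, $\theta\le\epsilon_1$ and the calibration $\eta_1=\epsilon_1\eta$, then play the resulting $L^\epsilon$ decay against the measure lower bound $|\Omega_r|\ge c_0 r^n$. The paper phrases the conclusion as a contrapositive of Theorem \ref{Le} with a scaling constant $\kappa=1/\epsilon_1$ introduced first, whereas you run an explicit contradiction, but this is a cosmetic difference only.
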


\begin{proof}
We choose $\theta_0 = 1$ and let $\epsilon,\eta,C_1$ be constants in Theorem \ref{Le}.
Since $\varphi_\Omega(0) =0$ and $[\varphi_\Omega]_{C^1} < \mu$, there exists a small $c=c(n,\mu)>0$ such that $c|B_r| \leq |\Omega_r|$.
Let $\kappa>1$ be a constant such that $C_1 \kappa^{-\epsilon} < \frac{c}{2}|B_1|$.
Consider $\tilde{u}(x) = \kappa r^{-a}u(rx)$. Then $\tilde{u}$ satisfies
\begin{align*}
    \begin{cases}
        \tilde{u} \geq 0 \text{ in } \tilde{\Omega}_{\tilde{\rho}+1}, \\
        \mathcal{P}^-(D^2\tilde{u},D\tilde{u}) \leq \kappa r^{2-a} \epsilon_1 \text{ in } \{|D\tilde{u}-\kappa r^{1-a}q| \geq \kappa r^{1-a}\theta\}\cap \tilde{\Omega}_{\tilde{\rho}+1},\\
        \tilde{\beta} \cdot D\tilde{u} \leq \kappa r^{1-a}\eta_1 \text{ on } \partial \tilde{\Omega}_{\tilde{\rho}+1},\\
        |\{\tilde{u} > \kappa \} \cap \tilde{\Omega}_1| \geq \frac{1}{2}|\tilde{\Omega}_1| \geq \frac{c}{2}|B_1| > C_1\kappa^{-\epsilon}, \\
    \end{cases}
\end{align*}
where $\tilde{\Omega} = \frac{1}{r} \Omega$ and $\tilde{\beta}(x) = \beta(rx)$.
Choosing $\epsilon_1 =\kappa^{-1}$ and $\eta_1 = \kappa^{-1}\eta $, we have $\kappa r^{2-a} \epsilon_1 \leq 1$, $\kappa r^{1-a} \theta \leq \theta_0$ and $\kappa r^{1-a} \eta_1 \leq \eta$.
Therefore, applying Theorem \ref{Le}, we get $\tilde{u} > 1$ in $\tilde{\Omega}_1$ and therefore $u >\epsilon_1 r^a$ in $\Omega_r$.
\end{proof}

Finally, by repeating the standard arguments in \cite{Imbert16}, we can prove the Theorem \ref{holder} and conclude that $u$ is H{\"o}lder continuous up to the boundary.

\section{Improvement of flatness} \label{sec4}

Now we prove a compactness result of oblique boundary condition as in \cite{Ricarte20}.
\begin{lem} \label{appro}
    Let $u$ satisfy $|u| \leq 1$ and be a viscosity solution of
    \begin{align*} 
        \begin{cases}
            |Du-q|^\gamma F(D^2u) =f \text{ in } \Omega_1,\\
            \beta \cdot Du = g \text{ on } \partial \Omega_1,
        \end{cases}
    \end{align*}
    where $q \in \mathbb{R}^n$.
    Given $\delta>0$, there exists $\epsilon=\epsilon(\delta,n, \lambda,\Lambda,\delta_0)>0$ such that if
    \begin{align*}
        \norm{f}_{L^\infty(\Omega_1)}, \norm{g}_{L^\infty(T_1)}, \norm{\beta-\beta_0}_{C^\alpha(T_1)}, \norm{\varphi}_{C^1(T_1)} \leq \epsilon,
    \end{align*}
    then there exists a function $h \in C^{1,\alpha_0}(\overline{B^+_{3/4}})$ such that
    \begin{align*}
        \begin{cases}
            F(D^2h) = 0 \text{ in } B^+_{3/4},\\
            \beta_0 \cdot Dh =0 \text{ on } T_{3/4},
        \end{cases}
    \end{align*}
    where $\beta_0 = \beta(0)$ is a constant vector and $\norm{u-h}_{L^\infty(\Omega_{1/2})} \leq \delta$.
\end{lem}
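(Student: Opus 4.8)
The plan is a compactness-and-contradiction argument; the points that need care are the uncontrolled shift $q$, the moving $C^1$ boundary, and the moving oblique vector. Suppose the statement fails for some $\delta>0$: then for each $k\in\mathbb N$ there are $q_k\in\mathbb R^n$, a $C^1$ domain $\Omega^k$ with $0\in\partial\Omega^k$ and boundary graph $\varphi_k$, data $f_k,g_k$, an oblique $\beta_k$, and a viscosity solution $u_k$ of $|Du_k-q_k|^\gamma F(D^2u_k)=f_k$ in $\Omega^k_1$, $\beta_k\cdot Du_k=g_k$ on $\partial\Omega^k_1$, with $|u_k|\le1$ and $\|f_k\|_{L^\infty},\|g_k\|_{L^\infty},\|\beta_k-\beta_k(0)\|_{C^\alpha},\|\varphi_k\|_{C^1}\le 1/k$, yet $\|u_k-h\|_{L^\infty(\Omega^k_{1/2})}>\delta$ for every $h\in C^{1,\alpha_0}(\overline{B^+_{3/4}})$ solving $F(D^2h)=0$ in $B^+_{3/4}$ and $\beta_k(0)\cdot Dh=0$ on $T_{3/4}$. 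Where $|Du_k-q_k|>1$ one has $F(D^2u_k)=f_k/|Du_k-q_k|^\gamma$ with right-hand side bounded by $1$, so from $F(0)=0$ and uniform ellipticity, $\mathcal P^-(D^2u_k,Du_k)\le1$ and $\mathcal P^+(D^2u_k,Du_k)\ge-1$ there; since $\|\varphi_k\|_{C^1}\to0$, Theorem \ref{holder} (together with the standard rescaling/covering that upgrades the conclusion from $\Omega^k_{1/2}$ to $\Omega^k_r$ for every $r<1$) gives, for $k$ large, a Hölder bound for $u_k$ on $\overline{\Omega^k_{3/4}}$ uniform in $k$ and in $q_k$. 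Since $\varphi_k\to0$ in $C^1$, the sets $\Omega^k_{3/4}$ exhaust $B^+_{3/4}$, so by Arzel\`a--Ascoli and a diagonal argument (reading $u_k$ near $T_{3/4}$ off its boundary values and using the uniform Hölder modulus) we may pass to a subsequence along which $u_k\to u_\infty$ uniformly on $\overline{B^+_{3/4}}$ with $|u_\infty|\le1$, and also $\beta_k\to\beta_\infty$ uniformly for a constant oblique vector $\beta_\infty=\lim_k\beta_k(0)$ and $f_k\to0$, $g_k\to0$.

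I claim $u_\infty$ is a viscosity solution of $F(D^2u_\infty)=0$ in $B^+_{3/4}$. If $|q_k|\to\infty$ (along the subsequence), then for any $\phi\in C^2$ touching $u_\infty$ strictly from above at an interior point $x_0$, the maximum point $x_k$ of $u_k-\phi$ satisfies $x_k\to x_0$ and $F(D^2\phi(x_k))\ge f_k(x_k)/|D\phi(x_k)-q_k|^\gamma\to0$ because $|D\phi(x_k)-q_k|\to\infty$; hence $F(D^2\phi(x_0))\ge0$, and the supersolution case is symmetric. If instead $q_k\to q$, the same computation gives $F(D^2\phi(x_0))\ge0$ whenever $D\phi(x_0)\ne q$, and the remaining case $D\phi(x_0)=q$ reduces to it by perturbing $\phi$ to $\phi+tv\cdot(x-x_0)$ with a unit vector $v$ and $t\downarrow0$, so that the gradient at the perturbed touching point avoids $q$ --- equivalently, a viscosity solution of $|Du-q|^\gamma F(D^2u)=0$ is a viscosity solution of $F(D^2u)=0$, as in \cite{Imbert13,Ricarte20}. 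In either case $F(D^2u_\infty)=0$ in $B^+_{3/4}$.

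I claim also that $\beta_\infty\cdot Du_\infty=0$ on $T_{3/4}$ in the viscosity sense. Let $\phi$ touch $u_\infty$ strictly from above at $x_0\in T_{3/4}$, and let $x_k$ be a maximum point of $u_k-\phi$ over a fixed neighborhood of $x_0$ in $\overline{\Omega^k}$; then $x_k\to x_0$, and since $\partial\Omega^k\to T_{3/4}$ in $C^1$, after passing to a further subsequence either $x_k\in\Omega^k$ for all $k$ --- and the interior argument gives $F(D^2\phi(x_0))\ge0$ in the limit --- or $x_k\in\partial\Omega^k$ for all $k$, and then the oblique inequality $\beta_k(x_k)\cdot D\phi(x_k)\ge g_k(x_k)$ gives $\beta_\infty\cdot D\phi(x_0)\ge0$. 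Thus $\max\{F(D^2\phi(x_0)),\ \beta_\infty\cdot D\phi(x_0)\}\ge0$, and symmetrically at touching points from below, so $u_\infty$ is a viscosity solution of the homogeneous constant-oblique problem on $B^+_{3/4}$; by the $C^{1,\alpha_0}$ regularity theory defining $\alpha_0$ (\cite{Li18}), $u_\infty$ is $C^{1,\alpha_0}$ up to $T_{3/4}$.

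Finally, for each large $k$ let $h_k$ solve $F(D^2h_k)=0$ in $B^+_{3/4}$, $\beta_k(0)\cdot Dh_k=0$ on $T_{3/4}$, $h_k=u_\infty$ on $\partial B_{3/4}\cap\{x_n\ge0\}$; since $\beta_k(0)\to\beta_\infty$, continuous dependence of this mixed problem on the constant oblique vector gives $h_k\to u_\infty$ uniformly, so each $h_k$ is an admissible competitor in the hypothesis. Estimating $\|u_k-h_k\|_{L^\infty(\Omega^k_{1/2})}\le\|u_k-u_\infty\|+\|u_\infty-h_k\|$ --- the mismatch between $\Omega^k_{1/2}$ and $B^+_{1/2}$ on the $O(\|\varphi_k\|_\infty)$-thin layer near the flat boundary being absorbed by the uniform Hölder moduli of $u_k$ and $h_k$ --- we obtain $\|u_k-h_k\|_{L^\infty(\Omega^k_{1/2})}\to0$, which is $<\delta$ for $k$ large, contradicting the choice of the sequence and proving the lemma. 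I expect the boundary analysis to be the main obstacle: extracting the uniform Hölder estimate up to $\partial\Omega^k$ from Theorem \ref{holder} with no control on $q_k$, and passing to the limit across the varying $C^1$ boundaries and oblique vectors so that the limit inherits the correct homogeneous oblique condition on the flat boundary; the interior stability is essentially known (\cite{Imbert13,Ricarte20}).
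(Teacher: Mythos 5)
Your proposal is correct and follows essentially the same compactness-and-contradiction route as the paper: extract a uniformly Hölder subsequence via Theorem \ref{holder}, pass to the limit to get a solution of the homogeneous constant-oblique problem on the half-ball (using the bounded-vs-unbounded $q_k$ dichotomy and the cutting lemma), and then produce admissible competitors $h_k$ from the mixed Dirichlet--oblique problem whose convergence to $u_\infty$ yields the contradiction. The only substantive deviation is that you prescribe $h_k = u_\infty$ on $\partial B_{3/4}\cap\{x_n\ge 0\}$ whereas the paper uses $h_k = u_k$ there; your choice is slightly cleaner since it sidesteps the $O(\|\varphi_k\|_\infty)$ mismatch between $\overline{\Omega^k_{3/4}}$ and $\overline{B^+_{3/4}}$, and it makes the stability step depend only on $\beta_k(0)\to\beta_\infty$.
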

\begin{proof}
We assume the conclusion of the lemma is false.
Then there exist $\delta>0$ and a sequence of $F_k$, $u_k$, $f_k$, $g_k$, $\beta_k$, $\Omega_k$ and $q_k$ such that $u_k$ satisfies $|u_k| \leq 1$ and
\begin{align} \label{flat}
    \begin{cases}
        |Du_k -q_k|^\gamma F_k(D^2u_k)=f_k \text{ in } (\Omega_k)_1, \\
        \beta_k \cdot Du_k = g_k \text{ on } \partial (\Omega_k)_1,
    \end{cases}
\end{align}
with $\norm{f_k}_{L^\infty}$, $\norm{g_k}_{L^\infty}$, $\norm{\beta_k-\beta_k(0)}_{C^\alpha}$, $\norm{\varphi_{\Omega_k}}_{C^1} \leq \frac{1}{k}$, but $\norm{u_k-h_k}_{L^\infty((\Omega_k)_{1/2})} \geq \delta$ for any $h_k$ satisfying the corresponding conditions.
Notice that the equation \eqref{flat} has uniformly elliptic structure where $|Du_k - q_k| \geq 1$.
By the uniform boundary H{\"o}lder estimate in Theorem \ref{holder} and the Arzela-Ascoli theorem, a sequence $u_k$ converges to a function $u_{\infty} \in C(\overline{B^+_1})$ locally uniformly up to subsequence.
Observe that $F_k \rightarrow F_\infty$ uniformly in compact of $S(n)$ by the Arzela-Ascoli theorem.
Moreover, we have $\beta_k \rightarrow \beta_0$ uniformly for some constant oblique vector $\beta_0$ and $\Omega_k \rightarrow B^+_1$.
We claim that $u_\infty$ is a viscosity solution of
\begin{align*}
    \begin{cases}
        F_\infty(D^2u_\infty) = 0 \text{ in } B^+_1,\\
        \beta_0 \cdot Du_\infty =0 \text{ on }  T_1.
    \end{cases}
\end{align*}
If the claim is true, then consider a sequence of viscosity solutions $h_k$ satisfying
\begin{align*}
	\begin{cases}
		F_k(D^2h_k) = 0 \text{ in } B^+_{3/4},\\
		\beta_k(0) \cdot Dh_k =0 \text{ on }  T_{3/4},\\
		h_k = u_k \text{ on } \partial B^+_{3/4} \cap \{x_n>0\}.
	\end{cases}
\end{align*}
Existence of the solutions $h_k$ is guaranteed by Theorem 3.3 in \cite{Li18}.
By the Arzela-Ascoli theorem and the stability result, Proposition 2.1 in \cite{Li18}, a sequence $h_k$ converges to a function $h_{\infty} \in C(\overline{B^+_{3/4}})$ locally uniformly and $h_{\infty}$ satisfies
\begin{align*}
	\begin{cases}
		F_\infty(D^2h_\infty) = 0 \text{ in } B^+_{3/4},\\
		\beta_0 \cdot Dh_\infty =0 \text{ on }  T_{3/4},\\
		h_\infty = u_\infty \text{ on } \partial B^+_{3/4} \cap \{x_n>0\}.
	\end{cases}
\end{align*}
Since the solution of the above equation is unique by Theorem 3.3 in \cite{Li18}, we get $h_\infty = u_\infty$, which is a contradiction.

We now prove the claim. If the sequence $q_k$ is bounded, then we have subsequence such that $q_k \rightarrow q_\infty$ for some $q_\infty \in \mathbb{R}^n$.
Therefore, using stability results, we have that  $u_\infty$ is a viscosity solution of
\begin{align*}
    \begin{cases}
        |Du_\infty - q_\infty|^\gamma F(D^2u_\infty) = 0 \text{ in } B^+_1,\\
        \beta_0 \cdot Du_\infty =0 \text{ on } T_1.
    \end{cases}
\end{align*}
By the cutting lemma as in \cite{Imbert13,Ricarte20}, we have $F(D^2u_\infty) = 0 \text{ in } B^+_1$ and prove the claim.

If the sequence $q_k$ is unbounded, then we have
\begin{align*}
    \left|\frac{Du_k}{|q_k|} - \frac{q_k}{|q_k|}\right|^\gamma F(D^2u_k) = \frac{f_k}{|q_k|}.
\end{align*}
Since $\frac{q_k}{|q_k|} \rightarrow e$ for some unit vector $e$ and $|q_k| \rightarrow \infty$, $u_\infty$ is a viscosity solution of
\begin{align*}
    \begin{cases}
        |0\cdot Du_\infty -e|^\gamma F(D^2u_\infty) = 0 \text{ in } B^+_1,\\
        \beta_0 \cdot Du_\infty =0 \text{ on }  T_1.
    \end{cases}
\end{align*}
Since $|e|=1$, we also prove the claim.
\end{proof}

\begin{lem} \label{appro2}
    Let $u$ satisfy $|u| \leq 1$ and be a viscosity solution of
    \begin{align*}
        \begin{cases}
            |Du-q|^\gamma F(D^2u) =f \text{ in } \Omega_1,\\
            \beta \cdot Du = g \text{ on } \partial \Omega_1,
        \end{cases}
    \end{align*}
    where $q \in \mathbb{R}^n$.
    Given $\alpha <\min\left\{\alpha_0,\frac{1}{1+\gamma}\right\}$, there exist $0< r < 1/2$ and $\epsilon>0$ such that if
    \begin{align*}
        \norm{f}_{L^\infty(\Omega_1)}, \norm{g}_{L^\infty(T_1)}, \norm{\beta-\beta_0}_{C^\alpha(T_1)}, \norm{\varphi}_{C^1(T_1)} \leq \epsilon,
    \end{align*}
    then there exists an affine function $l = u(0)+b\cdot x$ such that
    \begin{align*}
        \norm{u-l}_{L^\infty(\Omega_{r})} \leq r^{1+\alpha},\\
        \beta_0 \cdot b = 0, \ |b| \leq C_e.
    \end{align*}
\end{lem}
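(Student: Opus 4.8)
The plan is to derive Lemma \ref{appro2} from the compactness Lemma \ref{appro} by a standard ``improvement of flatness'' argument: choose the approximating harmonic-type function $h$ from Lemma \ref{appro}, replace $u$ near $0$ by its first-order Taylor polynomial $l(x) = u(0) + Dh(0)\cdot x$, and use the interior/boundary $C^{1,\alpha_0}$ estimate for $h$ to control the error. First I would fix $\alpha < \min\{\alpha_0, 1/(1+\gamma)\}$ and pick $r \in (0,1/2)$ small enough that $C_e\, r^{1+\alpha_0} \le \tfrac12 r^{1+\alpha}$, where $C_e$ is the constant in the $C^{1,\alpha_0}$ estimate for solutions of $F(D^2h)=0$ with the constant oblique condition $\beta_0\cdot Dh=0$ on $T_{3/4}$. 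Then I would apply Lemma \ref{appro} with $\delta := \tfrac12 r^{1+\alpha}$ to obtain the corresponding $\epsilon$ and a function $h \in C^{1,\alpha_0}(\overline{B^+_{3/4}})$ with $F(D^2h)=0$ in $B^+_{3/4}$, $\beta_0\cdot Dh=0$ on $T_{3/4}$, and $\|u-h\|_{L^\infty(\Omega_{1/2})} \le \delta$.

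Next I would set $b := Dh(0)$ and $l(x) := u(0) + b\cdot x$. Since $|u|\le 1$ we get $\|h\|_{L^\infty(B^+_{3/4})} \le 1 + \delta \le 2$, so the $C^{1,\alpha_0}$ estimate gives $|b| = |Dh(0)| \le C_e \|h\|_{L^\infty} \le C_e$ (absorbing the harmless factor $2$ into a redefinition of $C_e$ or tracking it as a universal constant) and $[Dh]_{C^{\alpha_0}(B^+_{1/2})} \le C_e$. Evaluating the constant-oblique boundary condition at $0 \in T_{3/4}$ yields $\beta_0 \cdot b = \beta_0 \cdot Dh(0) = 0$, which is the second claimed property. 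For the $L^\infty$ bound, on $\Omega_r$ (with $r$ small the boundary of $\Omega$ is within $\epsilon$ of the flat boundary, so $\Omega_r$ is essentially contained in $B^+_{1/2}$ plus a thin $\epsilon$-collar) I would estimate $|u - l| \le |u - h| + |h - l|$: the first term is $\le \delta = \tfrac12 r^{1+\alpha}$, and the second term is bounded using $h(0) = u(0) + O(\epsilon)$ (from $\|u-h\|_{L^\infty(\Omega_{1/2})}\le\delta$, evaluated near $0$, keeping in mind $0\in\partial\Omega$ so we compare values at nearby interior points) together with $|h(x) - h(0) - Dh(0)\cdot x| \le [Dh]_{C^{\alpha_0}} |x|^{1+\alpha_0} \le C_e r^{1+\alpha_0} \le \tfrac12 r^{1+\alpha}$ by the choice of $r$. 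Summing gives $\|u - l\|_{L^\infty(\Omega_r)} \le r^{1+\alpha}$, possibly after shrinking $r$ once more and shrinking $\epsilon$ to absorb the $O(\epsilon)$ discrepancies coming from the mismatch between $\Omega_r$ and $B^+_r$ and from $h(0)$ versus $u(0)$.

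The main obstacle, I expect, is the geometric bookkeeping caused by $\Omega$ being only ``almost $C^1$-flat'' rather than exactly a half-ball: $h$ is defined on $B^+_{3/4}$ while $u$ lives on $\Omega_1$, the comparison $\|u-h\|_{L^\infty} \le \delta$ in Lemma \ref{appro} is stated on $\Omega_{1/2}$, and the point $0$ lies on $\partial\Omega$, so care is needed to make sense of ``$l = u(0) + b\cdot x$'' and to control $h$ on the small $\epsilon$-collar of $\Omega_r$ not contained in $B^+_{3/4}$. This is handled by first choosing $r$ depending only on the universal constants (via $C_e r^{1+\alpha_0}\le\tfrac12 r^{1+\alpha}$, which is possible precisely because $\alpha < \alpha_0$), then choosing $\epsilon = \epsilon(r)$ small enough — in particular $\epsilon \ll r^{1+\alpha}$ — so that the flatness of $\partial\Omega$ and the closeness of $\beta$ to $\beta_0$ make all the collar-terms and $h(0)-u(0)$ terms negligible relative to $\tfrac12 r^{1+\alpha}$; the Lipschitz bound $|b|\le C_e$ then guarantees the affine correction $l$ does not spoil the estimate.
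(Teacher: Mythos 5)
Your proposal is essentially the same as the paper's proof: apply Lemma \ref{appro} with $\delta$ a small fraction of $r^{1+\alpha}$, use the $C^{1,\alpha_0}$ estimate for $h$ to get the affine approximation $\tilde l(x)=h(0)+Dh(0)\cdot x$, then replace $h(0)$ by $u(0)$ and control the error by the triangle inequality. One small slip: the term $|h(0)-u(0)|$ is bounded by $\delta$ (not by $\epsilon$, as you suggest in passing), so your split $\delta=\tfrac12 r^{1+\alpha}$ together with $C_e r^{1+\alpha_0}\le\tfrac12 r^{1+\alpha}$ produces $2\delta+C_er^{1+\alpha_0}\le\tfrac32 r^{1+\alpha}$; the paper avoids this by taking both fractions equal to $\tfrac13$, and shrinking $\epsilon$ alone does not repair it since that middle term is independent of $\epsilon$.
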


\begin{proof}
For given $\delta>0$, let $\epsilon>0$ be the constant in Lemma \ref{appro}.
Then there exists a function $h \in C^{1,\alpha_0}$ which is a solution of a homogeneous equation and $\norm{u-h}_{L^\infty} \leq \delta$.
Thus from the $C^{1,\alpha_0}$ estimates for $h$, letting $\tilde{l}(x)= h(0) + Dh(0) \cdot x$, we have
\begin{align*}
    |h(x)-\tilde{l}(x)| \leq C_e|x|^{1+\alpha_0} \\
    \beta_0 \cdot Dh(0) = 0, \  |Dh(0)| \leq C_e.
\end{align*}
Now we choose small $r<\frac{1}{2}$ such that $C_er^{1+\alpha_0} \leq \frac{1}{3}r^{1+\alpha}$ and choose $\delta = \frac{1}{3}r^{1+\alpha}$.
Letting $l(x)=u(0)+ Dh(0) \cdot x$, we have
\begin{align*}
    \norm{u-\tilde{l}}_{L^\infty(\Omega_r)} &\leq \norm{u-h}_{L^\infty(\Omega_r)} + \norm{h-\tilde{l}}_{L^\infty(\Omega_r)} +|u(0) -h(0)| \\
    &\leq 2\delta + C_er^{1+\alpha_0} \leq r^{1+\alpha}.
\end{align*}
\end{proof}

\section{Proof of Theorem \ref{main}}
\label{sec5}
Now we are all set to give our complete proof of the
main result, Theorem \ref{main}.

\begin{proof}
Step 1. In this step we assert that we may assume $|u| \leq 1$,
$u(0) =0$ and $u$ satisfies
\begin{align*}
\begin{cases}
|Du-q|^\gamma F(D^2u) =f \text{ in } \Omega_1,\\
\beta \cdot Du = g \text{ on } \partial \Omega_1,
\end{cases}
\end{align*}
with $g(0) =0$, $\norm{f}_{L^\infty}\leq \epsilon$, $
\norm{g}_{C^\alpha}\leq \frac{\epsilon}{2}$, $
\norm{\beta-\beta_0}_{C^\alpha}\leq \frac{1-r^\alpha}{2C_e}\epsilon$,
$\varphi(0) = 0$, $D\varphi(0) =0$ and $\norm{\varphi}_{C^1} \leq
\epsilon$ where $r, \ \epsilon>0$ are from Lemma
\ref{appro2}.

Indeed, we first consider the scaled function $\tilde{u}(x) =
\frac{u(sx)}{K}$ with $0<s<1$ and $1<K<\infty$ to observe that
$\tilde{u}$ satisfies
\begin{align*}
\begin{cases}
|D\tilde{u}|^\gamma \tilde{F}(D^2\tilde{u}) = \tilde{f} \text{ in } \left(\frac{1}{s}\Omega\right) \cap B_1, \\
\tilde{\beta} \cdot D\tilde{u} = g_1 \text{ in } \partial
\left(\frac{1}{s}\Omega\right) \cap B_1,
\end{cases}
\end{align*}
where $\tilde{F}(M) = \frac{s^2}{K}F\left(\frac{K}{s^2}M\right)$,
$\tilde{f}(x) = \frac{s^{2+\gamma}}{K^{1+\gamma}}f(sx)$,
$\tilde{\beta}(x) = \beta(sx)$ and $g_1(x) =
\frac{s}{K} g(sx)$.

By choosing a proper coordinate, we assume that $\Omega_1$ can be
represented by the graph of $\varphi_\Omega$ with $\varphi_\Omega(0)
= 0$ and $D\varphi_\Omega(0) =0$. Since $\varphi_\Omega \in C^1$, there exists a small $s>0$ depending
on $C^1$ modulus of $\varphi_\Omega$ such that $|D\varphi_\Omega(x)|
\leq \epsilon$ for any $|x| \leq s$, where this $\epsilon$ was given
in Lemma \ref{appro2}. Note
$\tilde{\varphi}(x):=\varphi_{\frac{1}{s}\Omega}(x) =
\frac{\varphi(sx)}{s}$ to find $\norm{\tilde{\varphi}}_{C^1} \leq
\epsilon$. We also have
$\norm{\tilde{\beta}-\beta_0}_{C^\alpha} \leq s^\alpha
\norm{\beta-\beta_0}_{C^\alpha}$. Accordingly, we choose
$s<1$ such that
$\norm{\tilde{\varphi}}_{C^1}\leq \epsilon$ and
$\norm{\tilde{\beta}-\beta_0}_{C^\alpha} \leq
\frac{1-r^\alpha}{2C_e}\epsilon$.

Moreover, choosing $K = 4\left(1+\norm{u}_{L^\infty} +
\delta_0^{-1}\epsilon^{-1}\left(\norm{f}^{\frac{1}{1+\gamma}}_{L^\infty}
+ \norm{g}_{C^\alpha}\right)\right)$, we find $|\tilde{u}| \leq
\frac{1}{4}$, $\norm{\tilde{f}}_{L^\infty} \leq \epsilon$ and $\norm{
g_1}_{C^\alpha} \leq \frac{\delta_0}{4}\epsilon$.

Write $\tilde{v} (x) = \tilde{u}(x) - \frac{
 g_1 (0)}{\tilde{\beta}_n(0)}x_n-\tilde{u}(0)$ and $\tilde{q}=  - \frac{   g_1 (0)}{\tilde{\beta}_n(0)}e_n$, to discover that
$\tilde{v}$ satisfies
\begin{align*}
\begin{cases}
|D\tilde{v}-\tilde{q}|^\gamma  \tilde {F} ( D^2\tilde{ v }) = \tilde{f} \ & \text{ in } \tilde{ \Omega }_1 = \left(\frac{1}{s}\Omega\right) \cap B_1, \\
\tilde{\beta} \cdot D\tilde{v}= g_1 -\frac{ g_1
(0)}{\tilde{\beta}_n(0)}\tilde{\beta}_n =\tilde{g} & \text{ on }
\partial \tilde{ \Omega }_{1} = \partial \left(\frac{1}{s}\Omega\right) \cap B_1,
\end{cases}
\end{align*}
with $\tilde{v}(0) = 0$, $\tilde{g}(0)=0$, $|\tilde{v}| \leq 2|\tilde{u}|
+\frac{|
 g_1(0)|}{\delta_0} \leq 1$ and $\norm{\tilde{g}}_{C^\alpha} \leq
\norm{g_1}_{C^\alpha} + \frac{| g_1 (0)|}{\delta_0}
\norm{\tilde{\beta}_n}_{C^\alpha} \leq \frac{\epsilon}{2}$.

Step 2: This step claims that for $r, \ \epsilon>0$ from Lemma
\ref{appro2} and $k=0,1,...$, there exists $l_k(x) = b_k \cdot x$
such that
\begin{align}
\label{condlk}
\begin{cases}
\norm{u-l_k}_{L^\infty(\Omega_{r^k})} \leq r^{k(1+\alpha)}, \\
\beta_0 \cdot b_k =0, \\
|b_k - b_{k+1}| \leq C_er^{k\alpha}.
\end{cases}
\end{align}
This can be proved by induction on $k$. For $k=0$ it follows from
Step 1 with $l_0 = 0$.

Assuming that the claim is true for some $k$, we consider
\begin{align*}
v(x)=\frac{(u-l_k)(r^kx)}{r^{k(1+\alpha)}}.
\end{align*}
Then $|v| \leq 1$ in $\left(\frac{1}{r^k}\Omega\right) \cap B_1$, $v(0)=0$ and $v$ satisfies
\begin{align*}
\begin{cases}
|Dv+q_k|^\gamma F_k(D^2v) = f_k \text{ in } \left(\frac{1}{r^k}\Omega\right) \cap B_1, \\
\beta_k \cdot Dv = g_k \text{ in } \partial
\left(\frac{1}{r^k}\Omega\right) \cap B_1,
\end{cases}
\end{align*}
where
$$
F_k(M) = r^{k(\alpha-1)}F(r^{-k(\alpha-1)}M), \ q_k =
r^{-k\alpha}q-r^{-k\alpha}b_m,  \ f_k(x)=
r^{k(1-\alpha(1+\gamma))}f(r^kx)$$ and
$$ \beta_k(x) = \beta(r^kx), \ g_k(x) = r^{-k\alpha}g(r^kx) - r^{-k\alpha}\beta(r^kx) \cdot b_k = \tilde{g}_k(x) -
\tilde{\beta}_k(x).$$ We now check that $v$ satisfies the assumption
of Lemma \ref{appro2}. Since $\alpha(1+\gamma)\leq 1$, we have
\begin{align*}
\norm{f_k}_{L^\infty} \leq
r^{k(1-\alpha(1+\gamma))}\norm{f}_{L^\infty} \leq \epsilon.
\end{align*}
Also since $g(0)=0$, we get $\norm{\tilde{g}_k}_{L^\infty} \leq \norm{\tilde{g}}_{C^\alpha} \leq \frac{\epsilon}{2}$.
Using $\beta_0 \cdot b_k =0$, and $|b_k| \leq \frac{C_e}{1-r^\alpha}$, we have
\begin{align*}
\norm{\tilde{\beta}_k}_{L^\infty} = \norm{r^{-k\alpha}
(\beta(r^kx)-\beta_0) \cdot b_k}_{L^\infty} \leq |b_k|
\norm{\beta-\beta_0}_{C^\alpha} \leq \frac{\epsilon}{2},
\end{align*}
which implies $\norm{g_k}_{L^\infty} \leq \epsilon$. Observe also
that find $\norm{\beta_k-\beta_0}_{C^\alpha} \leq
\norm{\beta-\beta_0}_{C^\alpha} \leq \epsilon$. In addition, we have
$\varphi_k(x):= \varphi_{\frac{1}{r^k}\Omega}(x) =
\frac{\varphi(r^kx)}{r^k}$, and $\varphi_k(0)=0$, $D\varphi_k(0)=0$
and $\norm{\varphi_k}_{C^1} \leq \norm{\varphi}_{C^1} \leq
\epsilon$. Therefore, we are under the assumptions of Lemma
\ref{appro2}, which implies that there exists a linear function
$\tilde{l}(x) = \tilde{b}\cdot x$ such that
\begin{align*}
\norm{v-l}_{L^\infty(\frac{1}{r^k}\Omega \cap B_{r})} \leq
r^{1+\alpha}, \ \beta_0 \cdot \tilde{b} =0 \text{ and } |\tilde{b}|
\leq C_e.
\end{align*}
Thus $l_{k+1}(x) = l_k(x) + r^{k(1+\alpha)} \tilde{l}(r^{-k}x)$,
$l_{k+1}$ satisfies the condition \eqref{condlk} for $k+1$ and the
claim follows.

Step 3: Once we have reached here, we can use the standard argument
for the Schauder estimate, eventually to deduce that $l_\infty =
\lim_{k\rightarrow \infty} l_k$ is the affine approximation of $u$
at 0 and $u$ is $C^{1,\alpha}$ at 0. By the standard covering
argument we conclude that $u \in
C^{1,\alpha}\left(\overline{\Omega_{1/2}}\right)$.
\end{proof}

\bibliographystyle{amsplain}

\begin{thebibliography}{10}

\bibitem{Andrade22}
P.~Andrade, D.~Pellegrino, E.~Pimentel, and E.~Teixeira,
  \emph{{$C^1$}-regularity for degenerate diffusion equations}, Adv. Math.
  \textbf{409} (2022), Paper No. 108667, 34.

\bibitem{Teixeira15}
D.~Ara\'{u}jo, G.~Ricarte, and E.~Teixeira, \emph{Geometric gradient
estimates
  for solutions to degenerate elliptic equations}, Calc. Var. Partial
  Differential Equations \textbf{53} (2015), no.~3-4, 605--625.

\bibitem{Araujo23}
D.~Ara\'{u}jo and B.~Sirakov, \emph{Sharp boundary and global
regularity for
  degenerate fully nonlinear elliptic equations}, J. Math. Pures Appl. (9)
  \textbf{169} (2023), 138--154.

\bibitem{Baasandorj24}
S.~Baasandorj, S.~Byun, K.~Lee, and S.~Lee,
\emph{{$C^{1,\alpha}$}-regularity
  for a class of degenerate/singular fully non-linear elliptic equations},
  Interfaces Free Bound. \textbf{26} (2024), no.~2, 189--215.

\bibitem{Baasandorj242}
\bysame, \emph{Global regularity results for a class of
singular/degenerate
  fully nonlinear elliptic equations}, Math. Z. \textbf{306} (2024), no.~1,
  Paper No. 1, 26.

\bibitem{Banerjee22}
A.~Banerjee and R.~Verma, \emph{{$C^{1,\alpha}$} regularity for
degenerate
  fully nonlinear elliptic equations with {N}eumann boundary conditions},
  Potential Anal. \textbf{57} (2022), no.~3, 327--365.

\bibitem{Barles93}
G.~Barles, \emph{Fully nonlinear {N}eumann type boundary conditions
for
  second-order elliptic and parabolic equations}, J. Differential Equations
  \textbf{106} (1993), no.~1, 90--106.

\bibitem{Bessa24}
J.~Bessa, \emph{Weighted {O}rlicz regularity for fully nonlinear
elliptic
  equations with oblique derivative at the boundary via asymptotic operators},
  J. Funct. Anal. \textbf{286} (2024), no.~4, Paper No. 110295, 35.

\bibitem{Bessa23}
J.~Bessa, J.~da~Silva, M.~Frederico, and G.~Ricarte, \emph{Sharp
{H}essian
  estimates for fully nonlinear elliptic equations under relaxed convexity
  assumptions, oblique boundary conditions and applications}, J. Differential
  Equations \textbf{367} (2023), 451--493.

\bibitem{Bessa242}
J.~Bessa, J.~da~Silva, and G.~Ricarte, \emph{Universal regularity
estimates for
  solutions to fully nonlinear elliptic equations with oblique boundary data},
  2024.

\bibitem{Birindelli1}
I.~Birindelli and F.~Demengel, \emph{Comparison principle and
{L}iouville type
  results for singular fully nonlinear operators}, Ann. Fac. Sci. Toulouse
  Math. (6) \textbf{13} (2004), no.~2, 261--287.








\bibitem{Birindelli2}
\bysame, \emph{Regularity and uniqueness of the first eigenfunction
for singular fully nonlinear operators}, J. Differential Equations
\textbf{249} (2010), no.~5, 1089--1110.









\bibitem{Birindelli3}
\bysame, \emph{$C^{1, \beta}$ regularity for Dirichlet problems
associated to fully nonlinear degenerate elliptic equations}, ESAIM
Control Optim. Calc. Var. \textbf{20} (2014), no. 4, 1009--1024.












\bibitem{Birindelli4}
I.~Birindelli, F.~Demengel, and F.~Leoni, \emph{Mixed boundary value
problems
  for fully nonlinear degenerate or singular equations}, Nonlinear Anal.
  \textbf{223} (2022), Paper No. 113006, 22.

\bibitem{Teixeira20}
A.~C. Bronzi, E.~A. Pimentel, G.~C. Rampasso, and E.~V. Teixeira,
  \emph{Regularity of solutions to a class of variable-exponent fully nonlinear
  elliptic equations}, J. Funct. Anal. \textbf{279} (2020), no.~12, 108781, 31.

\bibitem{Han20}
S.~Byun and J.~Han, \emph{{$W^{2,p}$}-estimates for fully nonlinear
elliptic
  equations with oblique boundary conditions}, J. Differential Equations
  \textbf{268} (2020), no.~5, 2125--2150.

\bibitem{Han22}
S.~Byun, J.~Han, and J.~Oh, \emph{On {$W^{2,p}$}-estimates for
solutions of
  obstacle problems for fully nonlinear elliptic equations with oblique
  boundary conditions}, Calc. Var. Partial Differential Equations \textbf{61}
  (2022), no.~5, Paper No. 162, 15.

\bibitem{Caffarelli95}
L.~A. Caffarelli and X.~Cabr\'{e}, \emph{Fully nonlinear elliptic
equations},
  American Mathematical Society Colloquium Publications, vol.~43, American
  Mathematical Society, Providence, RI, 1995.

\bibitem{Colombo14}
M.~Colombo and A.~Figalli, \emph{Regularity results for very
degenerate
  elliptic equations}, J. Math. Pures Appl. (9) \textbf{101} (2014), no.~1,
  94--117.

\bibitem{Ishii92}
M.~G. Crandall, H.~Ishii, and P.-L. Lions, \emph{User's guide to
viscosity
  solutions of second order partial differential equations}, Bull. Amer. Math.
  Soc. (N.S.) \textbf{27} (1992), no.~1, 1--67.

\bibitem{Filippis21}
C.~De~Filippis, \emph{Regularity for solutions of fully nonlinear
elliptic
  equations with nonhomogeneous degeneracy}, Proc. Roy. Soc. Edinburgh Sect. A
  \textbf{151} (2021), no.~1, 110--132.

\bibitem{Fang21}
Y.~Fang, V.~D. Radulescu, and C.~Zhang, \emph{Regularity of
solutions to
  degenerate fully nonlinear elliptic equations with variable exponent}, Bull.
  Lond. Math. Soc. \textbf{53} (2021), no.~6, 1863--1878.

\bibitem{Imbert13}
C.~Imbert and L.~Silvestre, \emph{{$C^{1,\alpha}$} regularity of
solutions of
  some degenerate fully non-linear elliptic equations}, Adv. Math. \textbf{233}
  (2013), 196--206.

\bibitem{Imbert16}
\bysame, \emph{Estimates on elliptic equations that hold only where
the
  gradient is large}, J. Eur. Math. Soc. (JEMS) \textbf{18} (2016), no.~6,
  1321--1338.

\bibitem{Ishii91}
H.~Ishii, \emph{Fully nonlinear oblique derivative problems for
nonlinear
  second-order elliptic {PDE}s}, Duke Math. J. \textbf{62} (1991), no.~3,
  633--661.

\bibitem{Jerison82}
D.~Jerison and C.~Kenig, \emph{Boundary behavior of harmonic
functions in
  nontangentially accessible domains}, Adv. in Math. \textbf{46} (1982), no.~1,
  80--147.

\bibitem{Silva23}
E.~J\'{u}nior, J.~da~Silva, G.~Rampasso, and G.~Ricarte,
\emph{Global
  regularity for a class of fully nonlinear {PDE}s with unbalanced variable
  degeneracy}, J. Lond. Math. Soc. (2) \textbf{108} (2023), no.~2, 622--665.

\bibitem{Yun24}
K.~Lee, S.~Lee, and H.~Yun, \emph{{$C^{1,\alpha}$}-regularity for
solutions of
  degenerate/singular fully nonlinear parabolic equations}, J. Math. Pures
  Appl. (9) \textbf{181} (2024), 152--189.

\bibitem{Li23}
D.~Li and X.~Li, \emph{Pointwise boundary ${{C}^{1,\alpha}}$
estimates for some
  degenerate fully nonlinear elliptic equations on ${C^{1,\alpha}}$
  domains}, arXiv preprint,
  2023.

\bibitem{Li18}
D.~Li and K.~Zhang, \emph{Regularity for fully nonlinear elliptic
equations
  with oblique boundary conditions}, Arch. Ration. Mech. Anal. \textbf{228}
  (2018), no.~3, 923--967.

\bibitem{Silvestre06}
E.~Milakis and L.~Silvestre, \emph{Regularity for fully nonlinear
elliptic
  equations with {N}eumann boundary data}, Comm. Partial Differential Equations
  \textbf{31} (2006), no.~7-9, 1227--1252.

\bibitem{Patrizi08}
S.~Patrizi, \emph{The {N}eumann problem for singular fully nonlinear
  operators}, J. Math. Pures Appl. (9) \textbf{90} (2008), no.~3, 286--311.

\bibitem{Ricarte20}
G.~Ricarte, \emph{Optimal {$C^{1,\alpha}$} regularity for degenerate
fully
  nonlinear elliptic equations with {N}eumann boundary condition}, Nonlinear
  Anal. \textbf{198} (2020), 111867, 13.

\bibitem{Savin07}
O.~Savin, \emph{Small perturbation solutions for elliptic
equations}, Comm.
  Partial Differential Equations \textbf{32} (2007), no.~4-6, 557--578.

\end{thebibliography}

\end{document}